\newtheorem{proposition}{Proposition}
\newtheorem{theorem}[proposition]{Theorem}
\newtheorem{lemma}[proposition]{Lemma}
\newtheorem{corollary}[proposition]{Corollary}
\newtheorem{observation}[proposition]{Observation}
\theoremstyle{definition}  %avec amsart
\newtheorem{axioms}[proposition]{Axioms} %si amsart -> newtheorem 
\newtheorem{definition}[proposition]{Definition}
\newtheorem{example}[proposition]{Example}
\newtheorem{remark}[proposition]{Remark}
\newcommand{\secref}[1]{Section~\ref{#1}}
\newcommand{\thmref}[1]{Theorem~\ref{#1}}
\newcommand{\propref}[1]{Proposition~\ref{#1}}
\newcommand{\lemref}[1]{Lemma~\ref{#1}}
\newcommand{\cororef}[1]{Corollary~\ref{#1}}
\newcommand{\defref}[1]{Definition~\ref{#1}}
\newcommand{\remref}[1]{Remark~\ref{#1}}
\newcommand{\obsref}[1]{Observation~\ref{#1}}
\newcommand{\examref}[1]{Example~\ref{#1}}
\newcommand{\petitespace}{\nobreak\hspace{0.15em}}
\newcommand{\negskip}{\vspace{-1ex}}
\newcommand{\Jc}{\mathcal{T}}
\newcommand{\Top}{\mathrm{\bf Top}}
\newcommand{\Topp}{\mathrm{\bf Top^\ast}}
\newcommand{\Topw}{\mathrm{\bf Top^w}}
\newcommand{\N}{\mathcal{N}}
\renewcommand{\O}{\mathcal{O}}
\newcommand{\tiret}{\text{--}}  %\textendash
\newcommand{\WG}{\mathrm{WG}}
\newcommand{\op}{\mathrm{op}}
\newcommand{\bo}{\mathrm{b}}
\newcommand{\secat}{\mathrm{secat}}
\newcommand{\secatWG}{\secat^\WG}
\newcommand{\secatop}{\secat^\op}
\newcommand{\secatb}{\secat^\bo}
\newcommand{\liftcat}{\mathrm{liftcat}}
\newcommand{\liftcatWG}{\liftcat^\WG}
\newcommand{\liftcatop}{\liftcat^\op}
\newcommand{\Dop}{D^\op}
\newcommand{\cat}{\mathrm{cat}}
\newcommand{\catop}{\cat^\op}
\newcommand{\TC}{{\rm TC}\,}
\newcommand{\TCop}{\TC^\op}
\newcommand{\npu}{{n+1}}
\newcommand{\ipu}{{i+1}}
\newcommand{\kpu}{{k+1}}
\newcommand{\id}{{\rm id}}
\newcommand{\inc}{{\rm inc}}
\newcommand{\prun}{{\rm pr}_1}
\newcommand{\prdeux}{{\rm pr}_2}
\newcommand{\inun}{{\rm in}_1}
\newcommand{\indeux}{{\rm in}_2}
\renewcommand{\leq}{\leqslant}
\renewcommand{\geq}{\geqslant}
\newcommand{\inv}{{-1}}
\newcommand{\cercle}{\bm{S}^1}
\newenvironment{myabstract}[1]{%
  \begin{center}
  \begin{minipage}{0.87\textwidth}
  \small
  \noindent\textsc{#1.}%
}{%
  \end{minipage}
  \end{center}
  \par\smallskip
}
\title[Axiomatic sectional category]%
{Axiomatic Sectional category, Topological Complexity, and Homotopic distance}
\author{Jean-Paul Doeraene and Mohammed El Haouari}
\subjclass[2010]{55M30}  %avec amsart
\keywords{sectional category, topological complexity, homotopic distance.} %avec amsart
\begin{document}

\maketitle

\begin{myabstract}{Abstract}
Many of the properties of sectional category, topological complexity and homotopic distance are in fact derived from a small number of basic properties, which, once established, lead to all the others without further recourse to topology.
On the other hand, there are several variants of these notions: with open covers or else Whitehead-Ganea constructions, also with spaces and maps that are unpointed or else pointed, fibrewise, equivariant, etc.\ or even with algebraic models of spaces and maps.
These are two reasons why we build an axiomatic approach to all these notions, based on just three simple axioms.
\par\hspace{1em}We also introduce the notion of `lifting category' which unifies the notions of sectional category, topological complexity, and homotopic distance, all of which are special cases of lifting category.
\end{myabstract}

\selectlanguage{french}
\begin{myabstract}{Résumé}
Beaucoup des propriétés de la catégorie sectionnelle, de la complexité topologique et de la distance homotopique découlent en réalité d’un petit nombre de propriétés de base qui, une fois établies, permettent de déduire toutes les autres sans recours supplémentaire à la topologie.
D’autre part, il existe plusieurs variantes de ces notions~: avec des recouvrements ouverts ou bien des constructions de Whitehead-Ganea, avec des espaces et applications qui sont non pointés, pointés, fibrés, équivariants, etc., voire avec des modèles algébriques d’espaces et applications.  
Ce sont deux raisons pour lesquelles nous développons une approche axiomatique de toutes ces notions, fondée sur seulement trois axiomes simples.
\par\hspace{1em}Nous introduisons également la notion de \og catégorie de relèvement \fg{} qui unifie les notions de catégorie sectionnelle, de complexité topologique et de distance homotopique, toutes étant des cas particuliers de la catégorie de relèvement.
\end{myabstract}
 
%%%%%%%%%%%%%%%%%%%%%%%%%%%%%%%
\selectlanguage{english}
The {\em lifting category} from a map $f\colon X \to Y$ to another $\iota\colon A \to Y$ with same codomain is the least integer $n$ such that $X$ can be covered with $n+1$ open subsets $U_i$, $0\leq i \leq n$, and for each $i$, there exists a map $l_i\colon U_i \to A$ such that $\iota \circ l_i$ is homotopic to $f|_{U_i}$. If $f$ and $\iota$ are embeddings, the existence of $l_i$ means that $f(U_i)$ is deformable in $Y$ into $\iota(A)$. On the other hand, if $\iota$ is a fibration, ``homotopic'' can be replaced by ``equal''.

The lifting category is a generalization of both the {\em (Clapp-Puppe) category} of a map and the {\em sectional category} (or {\em Schwarz genus}) of a map.
We define and study the  lifting category with all its particular cases: Lusternik-Schnirelmann category,  sectional category, and also {\em topological complexity} and {\em homotopic distance}, all at once.

\smallskip 
Our aim is to make a clear distinction between topological considerations and, say, category-theoretic considerations. 
We develop this second aspect on the basis of a small number of axioms from which all the expected properties follow. This is done in Sections \ref{liftingcategory} and \ref{homotopicdistance}, and continued in Sections \ref{triangleinequality} and \ref{lscategory}.
The first aspect is dealt with in Sections \ref{liftcatop} to \ref{liftcatwg}.

After unpacking our tools in \secref{tools}, we set out our axioms and discuss the lifting and sectional category in \secref{liftingcategory}.
In \secref{homotopicdistance}, we apply the results of \secref{liftingcategory} to the homotopic distance and the topological complexity.
We then show that the notions based on open covers, see Sections \ref{liftcatop} and \ref{homdistop}, and the notions based on Whitehead and Ganea constructions, see \secref{liftcatwg}, satisfy the axioms. We show that the two approaches are equivalent under a normality condition.
We look at the pointed case in \secref{thepointedcase}. 
In \secref{triangleinequality}, we review the consequences of the triangle inequality of the homotopic distance. 
Finally, we return to the primitive definition of LS~category in \secref{lscategory}. 

\smallskip
For applications and examples, we will limit ourselves to the category $\Top$ of topological spaces and continuous maps, and the category $\Topw$ of well-pointed topological spaces and continuous maps preserving basepoints (`well-pointed' means that the inclusion of the basepoint is a closed cofibration), see \cite{Str72}. 
%These categories are suitable to the study of the lifting category and all stuff around this notion, because they are J-categories as defined in \cite{Doe93} (except that $\Top$ has no zero object, but it doesn't matter for our purpose). 
But the axiomatic development made here can also be applied to other categories provided they fit sufficiently into a `(Quillen) model category' structure.   
We use $\Jc$ as a generic notation for any of these categories.

%%%%%%%%%%%%%%%%%%%%%%%%%%%%%%%%%%%%%%%%%%%%%%%%%%%%%%%%%
\section{Useful lemmas}\label{tools}

All constructions here are made up to `homotopy equivalences', in such a way that the numbers we define are invariant with respect to these equivalences. We use the symbol $\simeq$ to indicate that two maps are homotopic. Spaces are non-empty.

\medskip
We will use a lot of `homotopy pullbacks'. Roughly speaking, a homotopy pullback is a homotopy commutative diagram  that satisfies the universal property of pullbacks up to homotopy equivalences.
A homotopy commutative square whose two opposite sides are homotopy equivalences is a homotopy pullback.
Warning: Not every  pullback is a homotopy pullback. However a pullback of two maps, one of which is a fibration, is a homotopy pullback. To build a homotopy pullback of two maps which are not fibrations, we have to replace one of them with a homotopy equivalent fibration. The `homotopy pushout' is the (Eckmann-Hilton) dual notion of the homotopy pullback.
For a more detailed description of this, see \cite{Mat76} in a strict topological context, or \cite{Doe93} in the more general context of `J-categories'.

\begin{lemma}\label{liftlemma}Assume we have any homotopy commutative diagram:
$$\xymatrix@C=3pc{
P\ar[d]_q\ar[r]_p&A\ar[d]^\iota\\
X\ar[r]_f&Y
}$$
and any map $g\colon W \to X$ in $\Jc$.
If there is a map $s\colon W \to P$ such that $q \circ s \simeq g$  then there is a map $l\colon W\to A$ such that $\iota \circ l\simeq f\circ g$.

If in addition the square is a homotopy pullback, then the converse is true too.
\end{lemma}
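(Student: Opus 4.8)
The plan is to handle the two implications separately: the first is a direct diagram chase with homotopies, and the second is an application of the universal property of the homotopy pullback.

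For the forward implication I would set $l := p \circ s$ and verify the required homotopy by concatenation. Since the square is homotopy commutative, $\iota \circ p \simeq f \circ q$; precomposing with $s$ gives $\iota \circ p \circ s \simeq f \circ q \circ s$. The hypothesis $q \circ s \simeq g$, postcomposed with $f$, gives $f \circ q \circ s \simeq f \circ g$. Concatenating these two homotopies yields $\iota \circ l = \iota \circ p \circ s \simeq f \circ g$. The only facts used are that homotopy of maps is preserved under pre- and post-composition, which hold in any of the categories $\Jc$ under consideration.

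For the converse, I would assume the square is a homotopy pullback and that I am given $l \colon W \to A$ together with a homotopy $H \colon \iota \circ l \simeq f \circ g$. The triple $(l, g, H)$ --- a map into the corner $A$, a map into the corner $X$, and a homotopy between the two resulting maps into $Y$ --- is exactly the input consumed by the universal property of the homotopy pullback $P$. That property then produces a map $s \colon W \to P$ equipped with homotopies $p \circ s \simeq l$ and $q \circ s \simeq g$ compatible with $H$. The second of these, $q \circ s \simeq g$, is precisely the conclusion sought.

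I expect the main obstacle to be the careful formulation and invocation of the universal property of the homotopy pullback, rather than any computation. The subtle point is that this property delivers $s$ only together with the homotopies $p \circ s \simeq l$ and $q \circ s \simeq g$, and never a strict equality; but this weak, up-to-homotopy lifting is exactly what the statement asks for, so nothing stronger is needed. Should a more concrete argument be preferred, one could first replace $\iota$ by a homotopy-equivalent fibration and $P$ by the genuine pullback of that fibration along $f$ --- which is then a homotopy pullback equivalent to the given one --- and build $s$ by hand using the path-lifting property of the fibration; but invoking the universal property directly keeps the proof within the axiomatic, topology-free spirit of the paper.
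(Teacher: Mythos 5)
Your proof is correct and follows essentially the same route as the paper: the forward direction sets $l = p\circ s$ and chases the homotopies $\iota\circ p\circ s \simeq f\circ q\circ s \simeq f\circ g$, and the converse invokes the universal property of the homotopy pullback to produce the whisker map $s = (g,l)$ with $q\circ s\simeq g$. Nothing further is needed.
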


\begin{proof}
Assume  there exists a map $s \colon W \to P$ such that $q\circ s \simeq g$. Let $l = p \circ s$. We have $\iota \circ l = \iota \circ p \circ s \simeq f \circ q \circ s \simeq f \circ g$.

Conversely, if  there exists $l\colon W\to A$ such that  $\iota \circ l\simeq f\circ g$, then the outer square in the following diagram is homotopy commutative:
$$\xymatrix{
W\ar@{-->}[r]_s\ar[rd]_g\ar@/^1pc/[rr]^{l}&P\ar[r]_p\ar[d]^q&A\ar[d]^\iota\\
&X\ar[r]_f&Y
}$$
If the square is a homotopy pullback, we have a `universal map' $s\colon W \to P$ (defined up to homotopy equivalences) such that $q \circ s \simeq g$ and $p \circ s \simeq l$.
\end{proof}

The map $s$ above, induced by a homotopy pullback, will be called a `whisker map' and denoted by $(g,l)$.

\begin{proposition}[Prism lemma]\cite[Lemmas 12 and 14]{Mat76}\label{prism}
Assume we have any homotopy commutative diagram in $\Jc$:
$$\xymatrix@R=1pc{
\bullet\ar[rr]^f\ar[dd]_g\ar[rd]_w&&\bullet\ar[dd]\\
&\bullet\ar[dd]\ar[ru]\\
\bullet\ar[rd]_h\ar[rr]|\hole&&\bullet\\
&\bullet\ar[ru]
}$$
If the right square is a homotopy pullback, then the left square is a homotopy pullback if and only if the rear rectangle is a homotopy pullback.
\end{proposition}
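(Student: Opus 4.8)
The plan is to reduce the homotopy statement to the ordinary (strict) pasting lemma for pullbacks, by making the right-hand vertical map a fibration and then replacing the relevant corners with honest pullbacks. The two tools that make this legitimate are recalled just before the statement: the pullback of two maps one of which is a fibration is automatically a homotopy pullback, and the homotopy-pullback property is invariant under homotopy equivalence of diagrams. First I would redraw the prism as two horizontally composable squares
$$\xymatrix{
P_1\ar[r]^w\ar[d]_g & P_3\ar[r]\ar[d] & P_2\ar[d]\\
Q_1\ar[r]_h & Q_3\ar[r] & Q_2
}$$
in which the left square is $(P_1,P_3,Q_1,Q_3)$, the right square is $(P_3,P_2,Q_3,Q_2)$, and the rear rectangle $(P_1,P_2,Q_1,Q_2)$ is their horizontal composite, so that $f$ and the bottom composite $Q_1\to Q_2$ are the outer horizontal edges.

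Next I would replace the right vertical map $P_2\to Q_2$ by a fibration up to homotopy equivalence; by invariance, this affects none of the three ``is a homotopy pullback'' assertions. Using the hypothesis that the right square is a homotopy pullback, I would then replace $P_3$ by the strict pullback $P_2\times_{Q_2}Q_3$: since $P_2\to Q_2$ is now a fibration this strict pullback is a homotopy pullback, so it agrees with the original $P_3$ through the canonical whisker equivalence of \lemref{liftlemma}, compatibly with the projections to $P_2$ and to $Q_3$. Moreover the induced map $P_2\times_{Q_2}Q_3\to Q_3$ is again a fibration, being a pullback of a fibration. After this substitution the left square and the rear rectangle become strict-pullback situations along the fibrations $P_3\to Q_3$ and $P_2\to Q_2$ respectively, so that $P_3\times_{Q_3}Q_1$ computes the homotopy pullback of the left cospan and $P_2\times_{Q_2}Q_1$ that of the rear cospan.

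The argument then closes with the purely categorical pasting lemma for strict pullbacks: with $P_3=P_2\times_{Q_2}Q_3$ the canonical comparison $P_3\times_{Q_3}Q_1\to P_2\times_{Q_2}Q_1$ is an isomorphism, so the left square is a strict pullback if and only if the outer rectangle is. Translating ``strict pullback along a fibration'' back into ``homotopy pullback'', and comparing $P_1$ with $P_3\times_{Q_3}Q_1$ and with $P_2\times_{Q_2}Q_1$ via the respective whisker maps, gives exactly the stated equivalence. I expect the main obstacle to be bookkeeping rather than anything conceptual: one must verify that the fibration replacement of $P_2\to Q_2$ and the substitution of $P_3$ by a strict pullback can be performed coherently, so that the chosen homotopies filling the three faces remain compatible and the comparison maps are genuinely homotopy equivalences. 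This is precisely where the invariance properties of homotopy pullbacks, and the warning that not every pullback is a homotopy pullback, have to be used with care.
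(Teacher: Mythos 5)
The paper does not actually prove this statement: it is imported as a citation (Lemmas 12 and 14 of Mather's paper), so there is no in-house proof to match yours against. Your argument is a correct and standard proof of the prism lemma in $\Top$, and it is essentially the fibration-replacement version of Mather's argument (Mather works with an explicit path-space model of the homotopy pullback, which amounts to the same thing). The skeleton is sound: replace $P_2\to Q_2$ by a fibration; use the hypothesis that the right square is a homotopy pullback to replace $P_3$ by the strict pullback $P_2\times_{Q_2}Q_3$, whose projection to $Q_3$ is again a fibration; invoke the strict pasting isomorphism $(P_2\times_{Q_2}Q_3)\times_{Q_3}Q_1\cong P_2\times_{Q_2}Q_1$. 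One point of precision: what closes the argument is not the statement ``the left square is a strict pullback iff the rectangle is,'' but the whisker-map formulation that you only reach in your final sentence --- after the replacements, each of the two remaining assertions is equivalent to a whisker map out of $P_1$ into a strict pullback along a fibration being a homotopy equivalence, and the pasting isomorphism identifies these two whisker maps up to homotopy, so one is an equivalence iff the other is. That identification in turn requires checking that the equivalence $P_3\to P_2\times_{Q_2}Q_3$ is compatible, up to specified homotopies, with the fillers of the three faces of the prism; this is the bookkeeping you flag, and it does go through, but it is the real content rather than a side issue. Note finally that the paper wants this lemma in an abstract $\Jc$ (``J-categories,'' via the reference to Doeraene), where your factorization-through-a-fibration formulation is the natural one, so your route also generalizes correctly beyond $\Top$.
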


Note that for the diagram to be homotopy commutative, the map $w$ must be the whisker map induced by the right homotopy pullback: $w = (h\circ g, f)$. 

\begin{remark}\label{hpbixid}For any map $\iota\colon A \to Y$ and space $Z$ in $\Jc$, the following pullback is a homotopy pullback:
$$\xymatrix@C=3pc{
A \times Z\ar[r]^{\iota \times \id_Z}\ar@{->>}[d]_\prun&Y \times Z\ar@{->>}[d]^\prun\\
A\ar[r]_{\iota}&Y
}$$
This is because projections are fibrations.

If $\iota$ is a closed cofibration, then $\iota \times \id_Z$ is also a closed cofibration, see \cite[Theorem 12]{Str68} for a more general result.
\end{remark}

\begin{lemma}\label{foursquares}Let $\iota\colon A\to Y$ and $\kappa\colon B \to Z$ be maps in $\Jc$. Then the following pullback is a homotopy pullback:
$$\xymatrix@C=3pc{
A \times B\ar[r]^{\id_A\times \kappa}\ar[d]_{\iota \times \id_B}&A \times Z\ar[d]^{\iota \times \id_Z}\\
Y \times B\ar[r]_{\id_Y\times \kappa}&Y\times Z.
}$$
\end{lemma}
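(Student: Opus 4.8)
The plan is to exhibit the square as one half of a horizontal pasting of two squares, each of which is an instance of \remref{hpbixid}, and then to conclude with the pasting law packaged in the prism lemma, \propref{prism}. Note first that the square is a genuine pullback (its apex is the set of pairs $((y,b),(a,z))$ with $y=\iota(a)$ and $z=\kappa(b)$, which is exactly $A\times B$), but neither $\id_Y\times\kappa$ nor $\iota\times\id_Z$ need be a fibration, so one cannot read off the homotopy-pullback property directly; the projections, however, are always fibrations, and that is what I would exploit.

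Concretely, I would glue a column of first-factor projections to the right of the given square, obtaining the strictly commutative rectangle with top row $A\times B\xrightarrow{\id_A\times\kappa}A\times Z\xrightarrow{\prun}A$ and bottom row $Y\times B\xrightarrow{\id_Y\times\kappa}Y\times Z\xrightarrow{\prun}Y$, the three vertical maps being $\iota\times\id_B$, $\iota\times\id_Z$ and $\iota$. The right-hand square of this rectangle is precisely the square of \remref{hpbixid} (with the two legs in exchanged roles, which is harmless since being a homotopy pullback is symmetric under transposing the square), hence a homotopy pullback. For the whole rectangle the row composites collapse, $\prun\circ(\id_A\times\kappa)=\prun$ and $\prun\circ(\id_Y\times\kappa)=\prun$, so the outer rectangle is again the square of \remref{hpbixid}, this time with $Z$ replaced by $B$, and is therefore a homotopy pullback as well.

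It remains to check the standing hypothesis of \propref{prism}, namely that the top map $\id_A\times\kappa$ of the left square is the whisker map induced by the right homotopy pullback; in the notation following \lemref{liftlemma} this reads $\id_A\times\kappa=(\iota\times\kappa,\prun)$. This holds, in fact strictly: $(\iota\times\id_Z)\circ(\id_A\times\kappa)=\iota\times\kappa=(\id_Y\times\kappa)\circ(\iota\times\id_B)$ and $\prun\circ(\id_A\times\kappa)=\prun$. With the right square a homotopy pullback, \propref{prism} then says the left square is a homotopy pullback if and only if the rear (outer) rectangle is one; since the outer rectangle is a homotopy pullback, so is the given square.

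I do not expect a real obstacle here: the argument is essentially bookkeeping around \remref{hpbixid} and \propref{prism}. The one point to watch is orientation — \remref{hpbixid} draws $\iota\times\id$ horizontally and the projections vertically, whereas in the statement $\iota\times\id_B$ and $\iota\times\id_Z$ are vertical — together with the small check that the two row composites genuinely reduce to the bare projections, so that the outer rectangle is literally the $Z=B$ instance of \remref{hpbixid} rather than merely resembling it.
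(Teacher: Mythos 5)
Your argument is exactly the paper's proof: paste the first-factor projections onto the right, observe that both the right square and the outer rectangle are instances of \remref{hpbixid}, and conclude with the Prism lemma. The extra checks you record (the row composites collapsing to $\prun$, and the whisker-map condition) are correct and implicit in the paper's version.
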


\begin{proof}Consider the following commutative diagram:
$$\xymatrix@C=3pc{
A \times B\ar[r]_{\id_A \times \kappa}\ar@{->>}@/^1pc/[rr]^\prun\ar[d]_{\iota \times \id_B}&A \times Z\ar[d]^{\iota \times \id_Z}\ar@{->>}[r]_\prun &A\ar[d]^\iota\\
Y \times B\ar[r]^{\id_Y\times \kappa}\ar@{->>}@/_1pc/[rr]_\prun&Y\times Z \ar@{->>}[r]^\prun&Y
}$$
By \remref{hpbixid}, the right square and the outer rectangle are homotopy pullbacks. So the left square is a homotopy pullback, too, by the Prism lemma.
\end{proof}

\begin{proposition}\label{hpbkappaxlambda}
Assume we have any homotopy pullback:
$$\xymatrix{
P\ar[r]^p\ar[d]_q&A\ar[d]^\iota\\
X\ar[r]_f&Y
}$$
and any map $\kappa\colon B \to Z$ in $\Jc$. Then the following square is a homotopy pullback, too:
$$\xymatrix@C=3pc{
P\times B\ar[r]^{p\times \kappa}\ar[d]_{q \times \id_B}&A\times Z\ar[d]^{\iota\times \id_Z}\\
X\times B\ar[r]_{f\times \kappa}&Y\times Z
}$$

%Then all vertical faces of the following homotopy commutative diagram are homotopy pullbacks:
%$$\xymatrix@C=1.5pc@R=1.5pc{
%&P \times C\ar@{-->}[rr]^{\kappa \times \lambda}\ar[ld]_\prun\ar[dd]^(.3){\zeta \times \id_C}|\hole&&X\times Z\ar[ld]^(.54){\prun}\ar[dd]^{f\times\id_Z}\\
%
%P\ar[rr]_(.7)\kappa\ar[dd]_\zeta&&X\ar[dd]^(.3)f\\
%
%&A\times C\ar[ld]^\prun\ar[rr]_(.3){\iota\times\lambda}|(.5)\hole&&Y \times Z\ar[ld]^{\prun}\\
%
%A\ar[rr]_\iota&&Y
%}$$
\end{proposition}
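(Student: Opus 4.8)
The plan is to mimic the proof of \lemref{foursquares}: I will adjoin the two projection squares to the right of our square and then read off the result from the Prism lemma. Concretely, I would complete the square to the homotopy commutative diagram
$$\xymatrix@C=3pc{
P\times B\ar[r]^{p\times \kappa}\ar[d]_{q \times \id_B}&A\times Z\ar[r]^{\prun}\ar[d]^{\iota\times \id_Z}&A\ar[d]^\iota\\
X\times B\ar[r]_{f\times \kappa}&Y\times Z\ar[r]_{\prun}&Y
}$$
so that the left square is exactly the one we must analyse, while the outer rectangle has horizontal edges $p\circ\prun$ and $f\circ\prun$.

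By \remref{hpbixid} the right square is a homotopy pullback, being the product of $\iota\colon A\to Y$ with $\id_Z$ sitting over the projections. It therefore suffices, by the Prism lemma, to show that the outer rectangle is a homotopy pullback; the left square will then follow by the cancellation direction, exactly as in \lemref{foursquares}. To handle the outer rectangle, I would factor its horizontal maps through $P$ and $X$, giving
$$\xymatrix@C=3pc{
P\times B\ar[r]^{\prun}\ar[d]_{q \times \id_B}&P\ar[r]^{p}\ar[d]^{q}&A\ar[d]^\iota\\
X\times B\ar[r]_{\prun}&X\ar[r]_{f}&Y
}$$
Here the right square is the given homotopy pullback, and the left square is a homotopy pullback by \remref{hpbixid} (it is the product of $q\colon P\to X$ with $\id_B$ over the projections). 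Pasting these two homotopy pullbacks via the composition direction of the Prism lemma shows that the outer rectangle is a homotopy pullback, which is what remained.

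The only real care needed — and the place where I expect the work to be fussy rather than deep — is in checking that each rectangle is genuinely homotopy commutative with the correct whisker maps, and in invoking the Prism lemma in the appropriate direction at each step (cancellation for the first diagram, composition for the second). Since every square in sight is either the given homotopy pullback or a product sitting over a pair of projections covered by \remref{hpbixid}, no topological input is required beyond these two ingredients.
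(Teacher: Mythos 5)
Your proof is correct, but it takes a genuinely different route from the paper's. The paper factors the horizontal maps of the square through the middle column $A\times B\to Y\times B$, writing $p\times\kappa=(\id_A\times\kappa)\circ(p\times\id_B)$ and $f\times\kappa=(\id_Y\times\kappa)\circ(f\times\id_B)$; the right square of that decomposition is a homotopy pullback by \lemref{foursquares}, the left square is the given homotopy pullback crossed with $\id_B$ (which the paper asserts is ``easily seen'' to be a homotopy pullback), and one application of the composition direction of the Prism lemma finishes. You instead paste the projection squares of \remref{hpbixid} onto the right, use the cancellation direction of the Prism lemma to reduce the claim to the outer rectangle with horizontal edges $p\circ\prun$ and $f\circ\prun$, and then establish that rectangle by a second factorization through $P$ and $X$, where both constituent squares are homotopy pullbacks (one by \remref{hpbixid} applied to $q$, the other the given one). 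Your version costs one extra application of the Prism lemma but buys two things: it bypasses \lemref{foursquares} entirely, and it replaces the paper's unproved ``easily seen'' step (that $-\times B$ preserves homotopy pullbacks) with an explicit argument --- indeed your second diagram is essentially a proof of that fact. The identifications you flag as needing care ($\prun\circ(p\times\kappa)=p\circ\prun$, etc.) all hold strictly, and the only non-strict commutativity in sight is the given homotopy $\iota\circ p\simeq f\circ q$, so the Prism lemma applies as you use it.
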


\begin{proof}
Consider the following homotopy commutative diagram:
$$\xymatrix@C=3pc{
P\times B\ar[r]^{p\times \id_B}\ar[d]_{q \times \id_B}&A\times B\ar[r]^{\id_A\times \kappa}\ar[d]_{\iota \times \id_B}&A\times Z\ar[d]^{\iota\times \id_Z}\\
X\times B\ar[r]_{f\times \id_B}&Y\times B\ar[r]_{\id_Y\times \kappa}&Y\times Z
}$$
We can easily see that the square on the left is a homotopy pullback, while \lemref{foursquares} assures us that the square on the right is also a homotopy pullback. So the rectangle is a homotopy pullback too, by the Prism lemma.
%Use \remref{hpbixid} for the side faces and the Prism lemma for the back face.
\end{proof}

%%%%%%%%%%%%%%%%%%%%%%%%%%%%%%%%%%%%%%%%%%%%%%%%%%%%%%%%%
\section[Sectional and Lifting category]{Abstract Sectional and Lifting category}\label{liftingcategory}

In this section, we give the axioms of the (abstract) `sectional category' and the properties that follow directly from them. We also introduce the notion of `lifting category', which generalizes the sectional category.

\begin{axioms}[Sectional category]\label{axioms}
We work with an (abstract) integer $\secat (\iota)$ defined for any  map  $\iota\colon A \to Y$ in $\Jc$,  called `sectional category' of $\iota$. 
We assume that  the following three axioms are satisfied:
\begin{itemize}\setlength{\itemsep}{2pt}
\item[\textbf{S0.}] For any map $\iota\colon A \to Y$ in $\Jc$, $\secat (\iota) = 0$ if and only if $\iota$ has a homotopy section, i.e.\ there is a map $s\colon Y\to A$ such that $\iota\circ s\simeq \id_Y$.
\item[\textbf{S1.}] %{compositionsecat}
If we have a homotopy commutative diagram in $\Jc$:
$$\xymatrix@R=1pc{
   B \ar[rr]^\zeta\ar[rd]_\kappa && A\ar[ld]^\iota \\
   & X
}$$
then $\secat\, (\iota) \leq \secat\,  (\kappa)$.
\item[\textbf{S2.}] 
If we have a homotopy pullback in $\Jc$:
$$\xymatrix@C=3pc{
P\ar[d]_q\ar[r]_p&A\ar[d]^\iota\\
X\ar[r]^f&Y
}$$
then $\secat(q) \leq \secat(\iota)$.
\end{itemize}
\end{axioms}

Let's already give an elementary but instructive example:\negskip
\begin{definition}\label{boolsecat}
The `boolean sectional category' $\secatb(\iota)$ of a map $\iota\colon A \to Y$ in $\Jc$ is 0 if $\iota$ has a homotopy section, and 1 otherwise.
\end{definition}

It is obvious that $\secatb$ satisfies axioms S0 and S1. If we have a homotopy pullback as in Axiom~S2, and $\iota$ has a homotopy section $s\colon Y \to A$, then $q$ has a homotopy section which is the whisker map $(\id_X,s \circ f)$. So $\secatb$ also satisfies S2.

\begin{definition}[Lifting category] \label{liftcatisasecat} For any pair of maps $f \colon X \to Y$ and $\iota\colon A \to Y$, 
consider a homotopy pullback as in Axiom~S2.
The `lifting category' from $f$ to $\iota$, is the sectional category of the map $q\colon P\to X$. We denote:\\[6pt]
\centerline{$\liftcat_f\,(\iota) = \secat(q)$.}
\end{definition}

\begin{remark}The homotopy pullback $P$ and the map $q$ are only defined up to homotopy equivalences, but if we have another choice: $P'$ and $q'$, we have whisker maps $s\colon P \to P'$ such that $q' \circ s \simeq q$ and  $s'\colon P' \to P$ such that $q \circ s' \simeq q'$, so by Axiom~S1,  $\secat(q') = \secat(q)$.\end{remark}

Although $\liftcat$ is defined above in terms of $\secat$, $\secat$ can also be seen as a special case of $\liftcat$.
Indeed, by applying \defref{liftcatisasecat} with $f = \id_Y$, $p = \id_A$ and $q =\iota$, we have:\negskip
\begin{observation}\label{secatisaliftcat}
For any map $\iota\colon A\to Y$ in $\Jc$,\\[6pt]
\centerline{$\secat(\iota) = \liftcat_{\id_Y}\,(\iota)$.}
\end{observation}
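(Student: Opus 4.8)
The plan is to apply \defref{liftcatisasecat} directly with $f = \id_Y$, exhibiting a particularly convenient homotopy pullback. By the Remark following \defref{liftcatisasecat}, the value $\liftcat_{\id_Y}(\iota) = \secat(q)$ does not depend on which homotopy pullback of $\id_Y$ and $\iota$ we use, so I am free to pick the simplest representative.

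First I would form the cospan $Y \xrightarrow{\id_Y} Y \xleftarrow{\iota} A$ and propose as its homotopy pullback the square with $P = A$, $p = \id_A$, and $q = \iota$, namely
$$\xymatrix@C=3pc{
A\ar[d]_\iota\ar[r]^{\id_A}&A\ar[d]^\iota\\
Y\ar[r]_{\id_Y}&Y.
}$$
This square is strictly commutative, hence homotopy commutative, and its vertex is exactly $A$ with $q = \iota$ as the left-hand leg.

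The only point requiring verification is that this square is genuinely a homotopy pullback, and this is immediate from either criterion recalled in \secref{tools}: its two horizontal (opposite) sides $\id_A$ and $\id_Y$ are homotopy equivalences, so the homotopy commutative square is a homotopy pullback; alternatively, $\id_Y$ is a fibration and the square is an honest pullback, which again forces it to be a homotopy pullback.

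Having established that this square qualifies as the homotopy pullback appearing in Axiom~S2 with $f = \id_Y$, \defref{liftcatisasecat} yields $\liftcat_{\id_Y}(\iota) = \secat(q) = \secat(\iota)$, which is the desired identity. I do not expect a real obstacle here: the entire content is the recognition that $\iota$ itself serves as the pullback leg $q$ when $f = \id_Y$, and the invariance Remark guarantees this choice computes the same number as any other homotopy pullback.
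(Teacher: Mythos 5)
Your proposal is correct and follows exactly the paper's route: the paper justifies the observation by applying \defref{liftcatisasecat} with $f = \id_Y$, $p = \id_A$ and $q = \iota$, which is precisely the square you exhibit. Your additional check that this square is a homotopy pullback (two opposite sides are homotopy equivalences, as recalled in \secref{tools}) is the one detail the paper leaves implicit, and it is verified correctly.
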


Now let's establish the consequences of axioms S0 to S2.

\begin{proposition} \label{L0} For any pair of maps $f\colon X \to Y$ and $\iota\colon A \to Y$ in $\Jc$, \\
$\liftcat_f (\iota) = 0$ if and only if there is a map $l\colon X\to A$ such that $\iota\circ l\simeq f$.
\end{proposition}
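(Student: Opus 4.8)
The plan is to unwind the definition and reduce everything to the Lift Lemma. By \defref{liftcatisasecat}, we have $\liftcat_f(\iota) = \secat(q)$, where $q\colon P\to X$ is the left vertical map of a homotopy pullback of $f$ and $\iota$ as displayed in Axiom~S2. So the first step is simply to note that $\liftcat_f(\iota) = 0$ means $\secat(q) = 0$, which by Axiom~S0 is equivalent to saying that $q$ admits a homotopy section, i.e.\ that there exists a map $s\colon X\to P$ with $q\circ s\simeq \id_X$. At this point the whole statement has been recast purely in terms of a homotopy section of $q$.

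Next I would invoke \lemref{liftlemma} applied to this homotopy pullback with $W = X$ and $g = \id_X$. Since the square is a homotopy pullback by construction, the lemma applies in both directions. The forward direction gives that if $q$ has a homotopy section $s$ with $q\circ s\simeq \id_X$, then there is a map $l\colon X\to A$ with $\iota\circ l\simeq f\circ \id_X = f$. The converse direction gives that if such an $l$ exists, then there is a whisker map $s = (\id_X, l)\colon X\to P$ with $q\circ s\simeq \id_X$, which is precisely a homotopy section of $q$.

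Chaining these equivalences then yields the claim directly: $\liftcat_f(\iota) = 0$ iff $\secat(q) = 0$ iff $q$ has a homotopy section iff there is a map $l\colon X\to A$ with $\iota\circ l\simeq f$.

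I do not expect any genuine obstacle here; the proof is essentially bookkeeping. The only point requiring a moment's thought is choosing the right specialization $g = \id_X$ in \lemref{liftlemma}, which is exactly what converts the abstract statement ``$q$ has a homotopy section'' into the concrete lifting condition ``$f$ factors through $\iota$ up to homotopy''. Everything else is a mechanical combination of \defref{liftcatisasecat}, Axiom~S0, and \lemref{liftlemma}.
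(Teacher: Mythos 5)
Your proof is correct and follows exactly the paper's own argument: the paper's one-line proof is precisely ``use Axiom~S0 and \lemref{liftlemma} with the homotopy pullback of $f$ and $\iota$, and $g = \id_X$,'' which is the chain of equivalences you spell out. Your version is just a more detailed unpacking of the same steps.
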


\begin{proof}Use Axiom~S0 and \lemref{liftlemma} with the homotopy pullback of $f$ and $\iota$, and $g = \id_X$.\end{proof}

We can rephrase Axiom~S2 as follows:\negskip
\begin{observation} \label{L1}  For any pair of maps $f\colon X \to Y$ and $\iota\colon A \to Y$ in $\Jc$,\\[6pt]
\centerline{$\liftcat_f\,(\iota) \leq \secat(\iota)$.}
\end{observation}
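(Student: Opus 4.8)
The plan is to unwind the definition of the lifting category and then invoke Axiom~S2 directly. By \defref{liftcatisasecat}, for the given pair $f\colon X\to Y$ and $\iota\colon A\to Y$ one forms a homotopy pullback with projections $p$ and $q$, and sets $\liftcat_f(\iota)=\secat(q)$. Under this substitution, the target inequality $\liftcat_f(\iota)\leq\secat(\iota)$ reads simply $\secat(q)\leq\secat(\iota)$.

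This is precisely the conclusion of Axiom~S2 applied to that very homotopy pullback square. Hence, once the definition of lifting category is plugged in, the desired bound is a verbatim instance of the axiom, and no further argument is needed. This is why the statement is recorded as an \emph{observation} and introduced as a rephrasing of Axiom~S2 rather than as a substantive proposition.

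The only point worth a remark is well-definedness: the value $\liftcat_f(\iota)=\secat(q)$ a priori depends on the chosen homotopy pullback. But the preceding \remref{hpbixid}-style argument (the remark immediately following \defref{liftcatisasecat}) shows that two choices $q$ and $q'$ satisfy $\secat(q)=\secat(q')$ by passing through the mutually inverse whisker maps and applying Axiom~S1; consequently the quantity $\secat(q)$, and with it the inequality, is independent of the choice. I do not anticipate any genuine obstacle here: the entire content of the proof was front-loaded into the setup of \defref{liftcatisasecat}, precisely so that Axiom~S2 would translate directly into the bound.
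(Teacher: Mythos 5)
Your argument is correct and matches the paper exactly: the paper introduces this observation as a rephrasing of Axiom~S2, since by \defref{liftcatisasecat} one has $\liftcat_f(\iota)=\secat(q)$ for the homotopy pullback $q$ of $\iota$ along $f$, and Axiom~S2 gives $\secat(q)\leq\secat(\iota)$. Your added remark on well-definedness is likewise the content of the remark following \defref{liftcatisasecat}.
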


\begin{proposition}\label{L2}
Assume we have any homotopy commutative diagram in $\Jc$:
$$\xymatrix@C=3pc{
&B\ar[d]_\kappa\ar[r]_\zeta&A\ar[d]^\iota\\
W\ar[r]^g\ar@/_1pc/[rr]_h&X\ar[r]^f&Y.
}$$
Then we have:
$$\liftcat_h\,(\iota) \leq \liftcat_{g}\,(\kappa).$$
If in addition the square is a homotopy pullback (in particular if $f$ and $\zeta$ are homotopy equivalences), then $\liftcat_h\,(\iota) = \liftcat_g\,(\kappa)$.
\end{proposition}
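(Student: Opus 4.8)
The plan is to reduce both assertions to Axiom~S1 applied to the projections of two homotopy pullbacks. First I would fix a homotopy pullback $P$ of $h$ and $\iota$, with projections $q\colon P\to W$ and $p\colon P\to A$ (so $h\circ q\simeq \iota\circ p$), so that $\liftcat_h(\iota)=\secat(q)$ by \defref{liftcatisasecat}; and a homotopy pullback $Q$ of $g$ and $\kappa$, with projections $q'\colon Q\to W$ and $p'\colon Q\to B$ (so $g\circ q'\simeq \kappa\circ p'$), so that $\liftcat_g(\kappa)=\secat(q')$. Everything then comes down to comparing the two maps $q$ and $q'$ over the common base $W$.

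For the inequality, I would produce a whisker map $r\colon Q\to P$ over $W$. Combining the hypotheses $h\simeq f\circ g$ and $\iota\circ\zeta\simeq f\circ\kappa$ with the pullback relation $g\circ q'\simeq\kappa\circ p'$ gives $h\circ q'\simeq f\circ\kappa\circ p'\simeq\iota\circ(\zeta\circ p')$. Hence the pair $(q',\zeta\circ p')$ is compatible over $Y$, and \lemref{liftlemma}, applied to the homotopy pullback $P$ with the map $q'\colon Q\to W$, supplies a whisker map $r=(q',\zeta\circ p')\colon Q\to P$ with $q\circ r\simeq q'$ (and $p\circ r\simeq\zeta\circ p'$). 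The relation $q\circ r\simeq q'$ is exactly a homotopy commutative triangle over $W$, so Axiom~S1 yields $\secat(q)\leq\secat(q')$, that is, $\liftcat_h(\iota)\leq\liftcat_g(\kappa)$.

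For the equality under the homotopy pullback hypothesis, I would show that $Q$ together with $q'$ is itself a homotopy pullback of $h$ and $\iota$. I would stack the defining square of $Q$ to the left of the given square, forming the diagram whose top row is $Q\to B\to A$ (the maps $p'$ then $\zeta$) and whose bottom row is $W\to X\to Y$ (the maps $g$ then $f$), with vertical maps $q',\kappa,\iota$. Here the left square is a homotopy pullback by construction and the right square is a homotopy pullback by hypothesis, so by the Prism lemma (\propref{prism}) the outer rectangle is a homotopy pullback; its bottom composite is $f\circ g\simeq h$ and its projections are $q'$ and $\zeta\circ p'$, exhibiting $Q$ as a homotopy pullback of $h$ and $\iota$. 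Since $P$ is another such homotopy pullback, the reasoning of the Remark following \defref{liftcatisasecat} (whisker maps in both directions together with Axiom~S1) gives $\secat(q)=\secat(q')$, i.e.\ $\liftcat_h(\iota)=\liftcat_g(\kappa)$. The parenthetical case follows at once, since a homotopy commutative square whose two opposite sides $f$ and $\zeta$ are homotopy equivalences is automatically a homotopy pullback.

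I expect the main obstacle to be the equality case rather than the inequality. The inequality is a direct application of \lemref{liftlemma} and Axiom~S1. The delicate point for equality is the bookkeeping of homotopies: one must check that the Prism lemma applies to the pasted squares and that identifying $Q$ (via $q'$) with a homotopy pullback of $(h,\iota)$ is coherent with the whisker map $r$ already built, so that the invariance of $\secat$ under change of homotopy pullback may legitimately be invoked. Once $Q$ is recognized as a homotopy pullback of the same cospan as $P$, the equality is forced and no further topology is needed.
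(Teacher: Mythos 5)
Your proof is correct and follows essentially the same strategy as the paper: realize both $\liftcat_h(\iota)$ and $\liftcat_g(\kappa)$ as $\secat$ of pullback projections onto $W$, produce a whisker map between the total spaces over $W$ to invoke Axiom~S1 for the inequality, and identify the two homotopy pullbacks of $(h,\iota)$ via the Prism lemma for the equality. The only cosmetic difference is that the paper constructs the pullback of $(h,\iota)$ in two stages (first pulling back $\iota$ along $f$, then along $g$) and obtains the comparison map as the one induced between pullbacks over $W$ by the whisker map $B\to P$, whereas you form the pullback of $(h,\iota)$ directly and extract the comparison map in one step from its universal property via \lemref{liftlemma}; both routes rest on exactly the same lemmas.
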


\begin{proof}Using the Prism lemma, extend the homotopy commutative diagram of the hypothesis to this one:
$$\xymatrix@C=3pc{
P''\ar@/_1.5pc/[dd]_{q''}\ar@{-->}[d]\ar[r]&B\ar@/_1.5pc/[dd]_(0.3)\kappa\ar@{-->}[d]\ar[rd]^\zeta\\
P'\ar[d]^{q'}\ar[r]|(0.65)\hole&P\ar[d]^q\ar[r]&A\ar[d]^\iota\\
W\ar[r]^g\ar@/_1pc/[rr]_h&X\ar[r]^f&Y
}$$
where all squares and rectangles are homotopy pullbacks, and dashed arrows are whisker maps. By definition, $\liftcat_h(\iota) = \secat(q')$ and $\liftcat_g(\kappa) = \secat (q'')$. But by Axiom~S1, $\secat(q') \leq \secat(q'')$. So $\liftcat_h(\iota) \leq \liftcat_g(\kappa)$.

If in addition, the square of the hypothesis is a homotopy pullback, then we can assume that $P = B$, $P'' = P'$ and $q'' = q'$. So $\liftcat_h(\iota) = \liftcat_g(\kappa)$.
\end{proof}

\begin{corollary}[Homotopy invariance]\label{liftcathominvariant} Let $g, h\colon W \to Y$ and $\iota, \kappa\colon A \to Y$ be maps in $\Jc$ such that $h \simeq g$ and $\iota \simeq \kappa$. Then $\liftcat_h(\iota)= \liftcat_g(\kappa)$.
\end{corollary}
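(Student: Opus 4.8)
The plan is to obtain the Corollary as a single instance of \propref{L2}. That proposition compares $\liftcat_h(\iota)$ with $\liftcat_g(\kappa)$ whenever a homotopy commutative square relates the relevant target maps, and it yields \emph{equality} precisely when the two horizontal maps of that square are homotopy equivalences. So the entire task reduces to choosing the specialization of \propref{L2} that turns its square into the pair of homotopies $h\simeq g$ and $\iota\simeq\kappa$ assumed here.

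Concretely, I would instantiate the diagram of \propref{L2} with $X := Y$, $f := \id_Y$, $B := A$ and $\zeta := \id_A$, keeping $\kappa\colon A\to Y$, $\iota\colon A\to Y$ and $g\colon W\to Y$ as in the Corollary. With these choices the homotopy commutativity condition $\iota\circ\zeta\simeq f\circ\kappa$ collapses to $\iota\simeq\kappa$, which is one of our hypotheses, while the lower-composite condition $h\simeq f\circ g$ collapses to $h\simeq g$, which is the other. Thus the two assumed homotopies are exactly what is needed to place us in the situation of \propref{L2}.

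Finally, since $f=\id_Y$ and $\zeta=\id_A$ are homotopy equivalences, the strengthened clause of \propref{L2} applies: the square is a homotopy pullback and we obtain the equality $\liftcat_h(\iota)=\liftcat_g(\kappa)$, as claimed. I do not expect any genuine obstacle; the only real step is recognizing that the identities are the correct fillers for $f$ and $\zeta$, after which the conclusion is immediate and needs no further computation. One could alternatively argue in two passes—first varying $h$ alone (with $\iota=\kappa$) and then $\iota$ alone (with $h=g$)—but the single instance above is the most economical route.
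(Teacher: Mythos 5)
Your proposal is correct and is exactly the paper's own argument: the paper proves the corollary by citing \propref{L2} with $\zeta=\id_A$ and $f=\id_Y$, which is precisely the instantiation you describe, and the equality follows because these identities are homotopy equivalences. No gaps; nothing further is needed.
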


\begin{proof}This is a special case of \propref{L2} with $\zeta = \id_A$ and $f = \id_Y$.\end{proof}
\medskip
We bring together the essential equalities and inequalities in the following result:
%It's an extension of \cite[Proposition 28]{DoeHa11}.
\begin{proposition}\label{inegessent}
Assume we have any homotopy commutative diagram in $\Jc$:
$$\xymatrix@C=3pc{
 B\ar[d]_\kappa\ar[r]_\zeta&A\ar[d]^\iota\\
 X\ar[r]_f&Y
}$$

{\bf (1)} In any case, $\liftcat_f (\iota)\leq \inf \{\secat(\kappa), \secat(\iota)\}$.

{\bf (2)} If the square is a homotopy pullback, then $\liftcat_f (\iota) = \secat(\kappa) \leq\secat(\iota)$.

{\bf (3)} If $f$ has a homotopy section, then $\liftcat_f(\iota) = \secat(\iota) \leq \secat(\kappa)$.

{\bf (4)} If $f$ and $\zeta$ are homotopy equivalences, then $\liftcat_f (\iota)= \secat(\kappa)  = \secat(\iota)$.
\end{proposition}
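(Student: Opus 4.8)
The plan is to derive all four statements from the results already established—chiefly Observation~\ref{L1}, Proposition~\ref{L2}, and Definition~\ref{liftcatisasecat}—so that essentially no new homotopy-pullback manipulation is required. The four parts are not independent: (2) follows almost immediately from the definition, (1) packages two one-sided inequalities, (4) is the conjunction of (2) and (3), and (3) carries the only genuine content, namely a reverse inequality.

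For (1), the bound $\liftcat_f(\iota) \leq \secat(\iota)$ is exactly Observation~\ref{L1}. For the other bound $\liftcat_f(\iota) \leq \secat(\kappa)$, I would apply Proposition~\ref{L2} to the given square with $W = X$, $g = \id_X$, and $h = f$; its conclusion reads $\liftcat_f(\iota) \leq \liftcat_{\id_X}(\kappa)$, and $\liftcat_{\id_X}(\kappa) = \secat(\kappa)$ by Observation~\ref{secatisaliftcat}. Taking the infimum of the two bounds gives (1). For (2), when the square is itself a homotopy pullback we may take $P = B$ and $q = \kappa$ in Definition~\ref{liftcatisasecat}, so $\liftcat_f(\iota) = \secat(\kappa)$ directly; the inequality $\secat(\kappa) \leq \secat(\iota)$ is then Axiom~S2 applied to this pullback (equivalently, it is the instance just proved in (1)).

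The heart of the argument is (3), where I must produce the reverse inequality $\secat(\iota) \leq \liftcat_f(\iota)$, since $\liftcat_f(\iota) \leq \secat(\iota)$ is again Observation~\ref{L1}. Here I would use the homotopy section $\sigma\colon Y \to X$ of $f$, so that $f \circ \sigma \simeq \id_Y$. Letting $P$ be the homotopy pullback of $f$ and $\iota$ with projection $q$, I would feed the diagram
$$\xymatrix@C=3pc{
&P\ar[d]_q\ar[r]_p&A\ar[d]^\iota\\
Y\ar[r]^\sigma\ar@/_1pc/[rr]_{\id_Y}&X\ar[r]^f&Y
}$$
into Proposition~\ref{L2} with $W = Y$, $g = \sigma$, $h = \id_Y$. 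This yields $\liftcat_{\id_Y}(\iota) \leq \liftcat_\sigma(q)$; the left side equals $\secat(\iota)$ by Observation~\ref{secatisaliftcat}, and the right side is $\leq \secat(q) = \liftcat_f(\iota)$ by Observation~\ref{L1} and Definition~\ref{liftcatisasecat}. Combined with the easy direction this gives $\liftcat_f(\iota) = \secat(\iota)$, and then $\secat(\iota) = \liftcat_f(\iota) \leq \secat(\kappa)$ by (1). I expect the only subtle point to be checking that this auxiliary diagram is legitimately of the form required by Proposition~\ref{L2}—that its right-hand square is the defining homotopy pullback and its bottom triangle homotopy commutes via the section—rather than any real computation.

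Finally, for (4), the homotopy equivalences $f$ and $\zeta$ are opposite sides of the square, so it is automatically a homotopy pullback and (2) gives $\liftcat_f(\iota) = \secat(\kappa) \leq \secat(\iota)$; moreover a homotopy equivalence $f$ has a homotopy section, so (3) gives $\liftcat_f(\iota) = \secat(\iota) \leq \secat(\kappa)$. The two inequalities force $\secat(\kappa) = \secat(\iota)$, and (4) follows. Thus the whole proposition reduces to one nontrivial application of Proposition~\ref{L2} (in part (3)) together with bookkeeping.
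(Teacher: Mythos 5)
Your proposal is correct and follows essentially the same route as the paper: parts (1) and (2) via Proposition~\ref{L2} with $g=\id_X$ together with Observation~\ref{L1}, part (4) as the conjunction of (2) and (3), and part (3) by exploiting the homotopy section. The only cosmetic difference is in (3), where the paper applies Axiom~S2 directly to the left square of the composite homotopy pullback over the section (whose outer rectangle is the pullback along $\id_Y$, i.e.\ $\iota$ itself), whereas you reach the same inequality $\secat(\iota)\leq\secat(q)$ by invoking Proposition~\ref{L2} on that same diagram --- the underlying construction is identical.
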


\begin{proof}
{\bf (1 and 2)} The inequality or equality with $\secat\,\kappa$ is a special case of \propref{L2} with $g=\id_X$. The inequality with $\secat\,\iota$ is \obsref{L1}.

{\bf (3)} If we have a homotopy section $s\colon Y \to X$ of $f$, then (using the Prism lemma) build the following homotopy pullbacks:
$$\xymatrix@C=3pc{
A \ar[d]_\iota\ar[r] &P\ar[d]_q\ar[r]&A\ar[d]^\iota\\
Y \ar[r]_s& X\ar[r]_f&Y
}$$
We have $\secat \,\iota \leq \secat\, q \leq \secat \,\iota$ by Axiom~S2 and $\liftcat_f(\iota) = \secat\,q$ by definition, so $\liftcat_f(\iota) = \secat\, \iota$. The  inequality with $\secat\,\kappa$ is given by (1).

{\bf (4)} If $f$ and $\zeta$ are homotopy equivalences, then both the conditions of (2) and (3) are satisfied, so we have equalities.
\end{proof}

\begin{corollary}\label{secatixid}For any map $\iota\colon A \to Y$ and space $Z$ in $\Jc$, we have:
$$\secat(\iota \times \id_Z) = \secat(\iota).$$ 
\end{corollary}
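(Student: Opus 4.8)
The plan is to exhibit $\iota$ and $\iota\times\id_Z$ as the two vertical maps of a single homotopy pullback square whose bottom map admits a section, and then to read off the equality directly from \propref{inegessent}. Concretely, I would use the pullback of \remref{hpbixid} but with the two projections placed horizontally, namely
$$\xymatrix@C=3pc{
A \times Z\ar[r]^{\prun}\ar[d]_{\iota \times \id_Z}&A\ar[d]^\iota\\
Y \times Z\ar[r]_{\prun}&Y.
}$$
This square is the pullback of $\iota\colon A\to Y$ along $\prun\colon Y\times Z\to Y$: the pullback consists of pairs $(a,(y,z))$ with $\iota(a)=y$, which is just $A\times Z$, with projection $\prun$ to $A$ and projection $\iota\times\id_Z$ to $Y\times Z$. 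Since $\prun$ is a fibration, this pullback is a homotopy pullback, exactly as in \remref{hpbixid}.

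Next I would note that the bottom map $\prun\colon Y\times Z\to Y$ has a genuine section: as $Z$ is non-empty we may pick a map $c\colon Y\to Z$ and set $s=(\id_Y,c)$, so that $\prun\circ s=\id_Y$; in particular $\prun$ has a homotopy section. With the square in place, I would then apply \propref{inegessent} to it, taking its left vertical map $\kappa=\iota\times\id_Z$, its right vertical map $\iota$, and its bottom map $f=\prun$. Part~(2), valid because the square is a homotopy pullback, gives $\liftcat_{\prun}(\iota)=\secat(\iota\times\id_Z)$; part~(3), valid because $\prun$ has a homotopy section, gives $\liftcat_{\prun}(\iota)=\secat(\iota)$. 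Comparing the two equalities yields $\secat(\iota\times\id_Z)=\secat(\iota)$.

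The argument is essentially formal once the square is in place, and I expect no real difficulty in the category-theoretic steps, since everything is extracted from \propref{inegessent}. The one step that genuinely uses topology rather than the axioms is the construction of the section of $\prun$, which is where the non-emptiness of $Z$—equivalently, the existence of a point $\{*\}\to Z$ and hence of a map $Y\to Z$—is needed. This is the only potential obstacle, and it is a mild one.
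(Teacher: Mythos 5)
Your proof is correct and is essentially the paper's own argument: the paper likewise applies \propref{inegessent}\,(2 and 3) to the homotopy pullback of \remref{hpbixid} (read with the projections as the horizontal maps, so that $\iota\times\id_Z$ and $\iota$ are the vertical ones), using the obvious section of $\prun\colon Y\times Z\to Y$. You have merely made explicit the transposition of the square and the construction of the section from the non-emptiness of $Z$, both of which the paper leaves implicit.
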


\begin{proof}The projection $\prun\colon Y\times Z \to Y$ has an obvious section. We get the result applying \propref{inegessent}\petitespace(2 and 3) to the (homotopy) pullback of \remref{hpbixid}.
\end{proof}

\begin{proposition}\label{composition}
For any maps $f\colon X\to Y$, $g\colon W \to X$, and $\iota\colon A \to Y$ in $\Jc$, 
$$\liftcat_{f\circ g}\,(\iota) \leq \liftcat_{f}\,(\iota) \leq \secat(\iota).$$
If $g$ is a homotopy equivalence, then $\liftcat_{f\circ g}\,(\iota) = \liftcat_f\,(\iota)$. 
\end{proposition}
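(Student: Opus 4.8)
The plan is to reduce the two inequalities to the axioms already available, and then to extract the equality from the first inequality by a homotopy-inverse trick. The right-hand inequality $\liftcat_f(\iota) \leq \secat(\iota)$ needs nothing new: it is precisely \obsref{L1}.

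For the left-hand inequality $\liftcat_{f\circ g}(\iota) \leq \liftcat_f(\iota)$, I would build a two-stage tower of homotopy pullbacks. First I form a homotopy pullback of $f$ and $\iota$, giving a space $P$ with projections $q\colon P \to X$ and $p\colon P \to A$; by \defref{liftcatisasecat} this means $\liftcat_f(\iota) = \secat(q)$. Next I form a homotopy pullback of $g$ and $q$, giving $P'$ with projections $q'\colon P' \to W$ and $p'\colon P' \to P$. Pasting the two squares side by side and applying the Prism lemma (\propref{prism}), the outer rectangle is a homotopy pullback of $f\circ g$ and $\iota$, so by \defref{liftcatisasecat} again $\liftcat_{f\circ g}(\iota) = \secat(q')$. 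The left square now exhibits $q'$ as a homotopy pullback of $q$ along $g$, so Axiom~S2 yields $\secat(q') \leq \secat(q)$, that is $\liftcat_{f\circ g}(\iota) \leq \liftcat_f(\iota)$.

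For the equality when $g$ is a homotopy equivalence, I would feed the inequality just proved back into itself. Choosing a homotopy inverse $g^{-1}\colon X \to W$ and applying the left-hand inequality with base map $f\circ g$ and map $g^{-1}$ gives $\liftcat_{(f\circ g)\circ g^{-1}}(\iota) \leq \liftcat_{f\circ g}(\iota)$. Since $(f\circ g)\circ g^{-1} \simeq f$, homotopy invariance (\cororef{liftcathominvariant}) rewrites the left side as $\liftcat_f(\iota)$, so $\liftcat_f(\iota) \leq \liftcat_{f\circ g}(\iota)$; together with the first inequality this forces equality.

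I expect the only delicate point to be the pasting step: one must check that the two constituent squares really compose to a homotopy pullback of $f\circ g$ with $\iota$, and that the resulting diagram is genuinely homotopy commutative with the top-row whisker map matching correctly. This is exactly the content the Prism lemma is designed to supply once both small squares are known to be homotopy pullbacks, so no separate argument is required; every other step is a direct citation of Axiom~S2, \obsref{L1}, \defref{liftcatisasecat}, and \cororef{liftcathominvariant}.
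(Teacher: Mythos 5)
Your proof is correct, and the core of it — the two-stage tower of homotopy pullbacks over $W \to X \to Y$, the Prism lemma to identify the outer rectangle as the homotopy pullback of $f\circ g$ and $\iota$, and Axiom~S2 applied to the left square — is exactly the paper's argument for the two inequalities (the paper extracts $\liftcat_f(\iota)\leq\secat(\iota)$ from the same tower via S2 rather than citing \obsref{L1} separately, but that is the same fact). Where you diverge is the equality case: the paper observes that when $g$ is a homotopy equivalence the induced map $p'\colon P'\to P$ is also a homotopy equivalence and then invokes \propref{inegessent}\,(4), whereas you run the inequality a second time with a homotopy inverse $g^{-1}$ and use homotopy invariance (\cororef{liftcathominvariant}) to identify $\liftcat_{(f\circ g)\circ g^{-1}}(\iota)$ with $\liftcat_f(\iota)$. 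Both are valid; your version has the small advantage of not needing the (true but unproved-in-the-paper) fact that homotopy pullback along a homotopy equivalence yields a homotopy equivalence, staying entirely within results already established from the axioms, at the cost of an extra appeal to the symmetry of the situation.
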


\begin{proof}Build the following homotopy pullbacks (using the Prism lemma):
$$\xymatrix@C=3pc{
P' \ar[d]_{q'}\ar[r]_{p'} &P\ar[d]_q\ar[r]_p&A\ar[d]^\iota\\
W \ar[r]_g& X\ar[r]_f&Y
}$$
By definition of lifting category, we have $\liftcat_f\,(\iota) = \secat(q)$ and $\liftcat_{f\circ g}\,(\iota) =\secat(q')$, and by Axiom~S2, we also have $\secat (q') \leq \secat(q) \leq \secat(\iota)$.

Now if $g$ is a homotopy equivalence, then so is $p'$ and by \propref{inegessent}\petitespace(4), $\secat(q') = \secat(q)$,  so $\liftcat_{f\circ g}\,(\iota) = \liftcat_{f}\,(\iota)$.
\end{proof}

\begin{proposition}\label{liftcatproduit}
For any maps $f\colon X\to Y$, $\iota\colon A \to Y$ and $\kappa\colon B \to Z$ in $\Jc$, 
$$\liftcat_{f \times \kappa} \,(\iota \times \id_Z) = \liftcat_f \,(\iota).$$
\end{proposition}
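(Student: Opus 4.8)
The plan is to reduce everything to the two product results already established, namely \propref{hpbkappaxlambda} and \cororef{secatixid}. First I would compute the right-hand side by fixing a homotopy pullback of $f$ and $\iota$:
$$\xymatrix@C=3pc{
P\ar[d]_q\ar[r]_p&A\ar[d]^\iota\\
X\ar[r]_f&Y
}$$
so that, by \defref{liftcatisasecat}, $\liftcat_f\,(\iota) = \secat(q)$.

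Next I would feed this homotopy pullback, together with the map $\kappa\colon B \to Z$, into \propref{hpbkappaxlambda}. This tells me at once that the square with top map $p\times\kappa$, left map $q\times\id_B$, right map $\iota\times\id_Z$, and bottom map $f\times\kappa$ is again a homotopy pullback. But this square is precisely a homotopy pullback of the pair $f\times\kappa$ and $\iota\times\id_Z$, so by \defref{liftcatisasecat} we obtain $\liftcat_{f\times\kappa}\,(\iota\times\id_Z) = \secat(q\times\id_B)$.

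Finally, \cororef{secatixid} gives $\secat(q\times\id_B) = \secat(q)$, since taking the product with the identity of a fixed space leaves the sectional category unchanged. Chaining the three equalities yields
$$\liftcat_{f\times\kappa}\,(\iota\times\id_Z) = \secat(q\times\id_B) = \secat(q) = \liftcat_f\,(\iota),$$
as desired.

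I do not expect any genuine obstacle here: the whole argument is simply an assembly of previously proved product lemmas. The only point that deserves a word of care is that $P\times B$ with projection $q\times\id_B$ is a legitimate model for the homotopy pullback defining $\liftcat_{f\times\kappa}\,(\iota\times\id_Z)$ --- this is exactly the content of \propref{hpbkappaxlambda} --- while the independence of $\liftcat$ from the choice of homotopy pullback, noted just after \defref{liftcatisasecat}, guarantees that the resulting value is well defined.
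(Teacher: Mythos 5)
Your proposal is correct and follows exactly the paper's own argument: form the homotopy pullback of $f$ and $\iota$, invoke \propref{hpbkappaxlambda} to identify $q\times\id_B$ as the relevant homotopy pullback for $f\times\kappa$ and $\iota\times\id_Z$, and conclude with \cororef{secatixid}. Nothing to add.
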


\begin{proof}
Build the homotopy pullback of $f$ and $\iota$. Looking at the two homotopy pullbacks of \propref{hpbkappaxlambda}, by definition of lifting category, we get $\liftcat_f\,(\iota) = \secat (q)$ and $\liftcat_{f \times \kappa}\, (\iota \times \id_Z) = \secat(q \times \id_B)$. And by  \cororef{secatixid}, we know that $\secat(q \times \id_B) = \secat(q)$.
\end{proof}

%%%%%%%%%%%%%%%%%%%%%%%%%%%%%%%%%%%%%%%%%%%%%%%%%%%%%%%%%
\section{Homotopic distance and Topological complexity}\label{homotopicdistance}
Once we have our lifting category and its properties established in the previous section, we can apply them to the lifting category to a diagonal map, which is an outstanding special case. 

\smallskip
Note that any map $w\colon X \to Y\times Y$ is in fact a whisker map $(f,g)$ of two maps $f, g\colon X \to Y$ in the following (homotopy) commutative diagram:
$$\xymatrix{
X\ar@{-->}[r]_w\ar[rd]_f\ar@/^1pc/[rr]^{g}&Y\times Y\ar[r]_(.55)\prdeux\ar[d]^\prun&Y\ar[d]\\
&Y\ar[r]&\ast
}$$
(We use $\ast$ to denote the one-point space, final object of $\Jc$.)

\begin{definition}[Homotopic distance] \label{defhomdist} Let $f, g\colon X\to Y$ be two maps in $\Jc$. The `homotopic distance' between $f$ and $g$ is the lifting category from $(f,g)$ to $\Delta$, where $(f,g)\colon X \to Y \times Y$ is the whisker map induced by the product $Y \times Y$ and $\Delta\colon Y \to Y\times Y$ is the diagonal map. We denote  $$D(f,g) = \liftcat_{(f,g)}\, (\Delta).$$
\end{definition}

\begin{proposition}\label{Dhominvariant}For any maps $f, f', g, g'\colon X \to Y$ in $\Jc$ such that $f\simeq f'$ and $g\simeq g'$, we have $D(f,g) = D(f',g')$.
\end{proposition}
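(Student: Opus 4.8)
The plan is to reduce the statement to the homotopy invariance of the lifting category, \cororef{liftcathominvariant}, which was established just above. By \defref{defhomdist} we have $D(f,g) = \liftcat_{(f,g)}(\Delta)$ and $D(f',g') = \liftcat_{(f',g')}(\Delta)$, where in both cases the target map is the \emph{same} diagonal $\Delta\colon Y \to Y\times Y$. Since \cororef{liftcathominvariant} tells us that $\liftcat_h(\iota)$ depends only on the homotopy classes of the two maps $h$ and $\iota$, it suffices to prove that the two source maps coincide up to homotopy, namely $(f,g)\simeq (f',g')$ as maps $X \to Y\times Y$.

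The key step is therefore to check that homotopic components induce homotopic whisker maps into the product. Given a homotopy from $f$ to $f'$ and a homotopy from $g$ to $g'$, I would combine them into a single homotopy into $Y\times Y$ realizing $(f,g)\simeq (f',g')$. Abstractly this is the assertion that the whisker map into a product is functorial on homotopy classes: from $\prun\circ (f,g) = f \simeq f' = \prun\circ (f',g')$ and $\prdeux\circ (f,g) = g \simeq g' = \prdeux\circ (f',g')$, the universal property of $Y\times Y$ (up to homotopy) forces $(f,g)\simeq (f',g')$. This compatibility of the product with homotopy is the one point I would treat with care in the general $\Jc$-setting, since everything else is formal; in the topological case it is immediate, as a pair of homotopies $X\times I \to Y$ assembles into a homotopy $X\times I \to Y\times Y$.

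With $(f,g)\simeq (f',g')$ in hand, I would finish by applying \cororef{liftcathominvariant} with source maps $(f,g)\simeq (f',g')$ and (trivially homotopic) target maps $\Delta\simeq \Delta$, obtaining $\liftcat_{(f,g)}(\Delta) = \liftcat_{(f',g')}(\Delta)$, that is, $D(f,g) = D(f',g')$. The whole argument is thus a short two-line deduction once the homotopy $(f,g)\simeq (f',g')$ is secured, and I expect the sole (minor) obstacle to be that homotopy-of-components-implies-homotopy-of-the-pair justification.
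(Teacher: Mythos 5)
Your proposal is correct and follows essentially the same route as the paper: the paper's proof likewise observes that $(f,g)\simeq (f',g')$ by the universal property of the homotopy pullback defining the product, and then invokes \cororef{liftcathominvariant}. The one point you flag as needing care is exactly the point the paper settles by that universal property, so there is no gap.
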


\begin{proof}As any map into the product $Y\times Y$ is determined by the projections to the factors, the homotopies from $f$ to $f'$ and from $g$ to $g'$ induce a homotopy from $(f,g)$ to $(f',g')$. The result follows from \cororef{liftcathominvariant}. 
\end{proof}

\begin{definition}[Topological complexity] \label{defTC} Let $Y$ be a space in $\Jc$. The `topological complexity' of $Y$ is $\secat\,(\Delta)$ where $\Delta \colon Y\to Y\times Y$ is the diagonal map. We denote  $$\TC(Y) = \secat(\Delta).$$
\end{definition}

\begin{remark}\label{TCestunedistance}Actually, $\TC(Y) = D(\prun,\prdeux)$ where $\prun, \prdeux\colon Y\times Y \to Y$ are the two projections. 
Indeed, the whisker map $(\prun,\!\prdeux) = \id_{Y\!\times\! Y}$, so 
$D(\prun,\prdeux) = \liftcat_{\id_{Y\!\times\!Y}} (\Delta) = \secat (\Delta) = \TC(Y)$.
\end{remark}

\begin{proposition}\label{homdistisasecat}
 For any  maps $f, g\colon X\to Y$  in $\Jc$, we have
 $$D(f,g) = \liftcat_\Delta(q) = \secat(q') \leq \liftcat_{f\times g}(\Delta) = \secat(q) \leq \TC(Y)$$
 where $q$ and $q'$ are the maps in the following homotopy pullbacks:
$$\xymatrix@C=3pc{
P'\ar[r]_{p'}\ar[d]_{q'}&P\ar[r]_{p}\ar[d]_q&Y\ar[d]^\Delta\\
X\ar@/_1pc/[rr]_{(f,g)}\ar[r]^\Delta&X\times X\ar[r]^{f\times g}&Y\times Y.
}$$
\end{proposition}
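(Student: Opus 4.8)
The plan is to assemble the whole chain directly from \defref{liftcatisasecat}, the Prism lemma, Axiom~S2, and \obsref{L1}, so that no genuinely new computation is needed. The one algebraic identity to record first is that the bottom row factors the whisker map as $(f,g) = (f\times g)\circ\Delta$, where $\Delta\colon X \to X\times X$ is the diagonal of $X$; this is precisely what makes the outer rectangle compute $D(f,g)$.

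First I would build the two homotopy pullbacks drawn in the statement: the right square as the homotopy pullback of $f\times g$ and $\Delta\colon Y\to Y\times Y$ (so that $q$ is the pullback of the diagonal of $Y$), and then the left square as the homotopy pullback of $\Delta\colon X\to X\times X$ and $q$ (so that $q'$ is the pullback of $q$). By the Prism lemma (\propref{prism}), since both squares are homotopy pullbacks, the outer rectangle is a homotopy pullback as well, with left side $q'$ and bottom side the composite $(f\times g)\circ\Delta = (f,g)$.

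Next I would read off the equalities straight from the definition of lifting category. Applying \defref{liftcatisasecat} to the right square gives $\liftcat_{f\times g}(\Delta) = \secat(q)$; applying it to the left square gives $\liftcat_\Delta(q) = \secat(q')$; and applying it to the outer rectangle (using the factorization above) gives $D(f,g) = \liftcat_{(f,g)}(\Delta) = \secat(q')$. This already yields $D(f,g) = \liftcat_\Delta(q) = \secat(q')$ together with $\liftcat_{f\times g}(\Delta) = \secat(q)$. The two remaining inequalities then follow from general principles: since $q'$ is obtained as a homotopy pullback of $q$ in the left square, Axiom~S2 gives the middle inequality $\secat(q')\leq\secat(q)$; and \obsref{L1} gives $\liftcat_{f\times g}(\Delta)\leq\secat(\Delta) = \TC(Y)$, which is the last inequality $\secat(q)\leq\TC(Y)$.

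The only point requiring a little care---and the nearest thing to an obstacle---is checking that the Prism lemma applies in the orientation drawn and that the bottom edge of the outer rectangle really is the whisker map $(f,g)$, not some other map into $Y\times Y$. Once the factorization $(f,g) = (f\times g)\circ\Delta$ is noted, the rectangle genuinely is a homotopy pullback of $(f,g)$ and $\Delta$, so \defref{liftcatisasecat} legitimately identifies $\secat(q')$ with $D(f,g)$, and everything else is bookkeeping with results already established.
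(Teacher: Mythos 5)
Your proof is correct and is exactly the detailed unpacking of the paper's one-line proof (``Apply definitions and Axiom~S2''): the factorization $(f,g)=(f\times g)\circ\Delta$, the Prism lemma to make the outer rectangle a homotopy pullback, \defref{liftcatisasecat} for the three identifications, Axiom~S2 for the middle inequality, and \obsref{L1} for the last one. Nothing is missing and nothing differs in substance.
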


\begin{proof}Apply definitions and Axiom~S2.\end{proof}

\begin{proposition}\label{separation}$D(f,g) = 0$ if and only if $f \simeq g$.
\end{proposition}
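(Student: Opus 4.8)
The plan is to prove the statement $D(f,g) = 0 \iff f \simeq g$ by reducing it, via the definitions already established, to a lifting-criterion that unpacks to an honest statement about the two maps $f$ and $g$. Since $D(f,g) = \liftcat_{(f,g)}(\Delta)$ by \defref{defhomdist}, the natural first move is to apply \propref{L0} with the pair $(f,g)\colon X \to Y \times Y$ in the role of the map into the common codomain, and $\Delta\colon Y \to Y\times Y$ in the role of $\iota$. That proposition tells us that $\liftcat_{(f,g)}(\Delta) = 0$ if and only if there exists a map $l\colon X \to Y$ such that $\Delta \circ l \simeq (f,g)$.

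So the entire content of the theorem is the equivalence
\[
\bigl(\exists\, l\colon X\to Y,\ \Delta\circ l \simeq (f,g)\bigr) \iff f\simeq g.
\]
First I would prove the easy direction. If $f \simeq g$, take $l = f$ (equivalently $l=g$). Then $\Delta \circ l = (l,l) = (f,f)$, and since $f \simeq g$ we have $(f,f) \simeq (f,g)$ by the universal property of the homotopy pullback $Y\times Y$ (the same reasoning already invoked in the proof of \propref{Dhominvariant}); hence $\Delta\circ l \simeq (f,g)$, giving $D(f,g)=0$.

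For the converse, suppose there is an $l\colon X \to Y$ with $\Delta \circ l \simeq (f,g)$. Now $\Delta \circ l = (l,l)$, so the hypothesis reads $(l,l) \simeq (f,g)$ as maps $X \to Y\times Y$. The key step is to extract the two components: post-composing a homotopy $H\colon (l,l) \simeq (f,g)$ with the projections $\prun$ and $\prdeux$ yields $\prun\circ(l,l) \simeq \prun\circ(f,g)$ and $\prdeux\circ(l,l)\simeq\prdeux\circ(f,g)$, that is $l \simeq f$ and $l \simeq g$. By transitivity of $\simeq$ we conclude $f \simeq l \simeq g$, so $f \simeq g$.

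The main obstacle, such as it is, is purely bookkeeping around the whisker-map formalism: I must be sure that $\Delta \circ l$ really equals (or is canonically homotopic to) the whisker map $(l,l)$, and that the components of $(f,g)$ under the projections are indeed $f$ and $g$ up to homotopy. Both facts are built into the definition of the whisker map induced by the product $Y\times Y$ noted just before \defref{defhomdist}, so once this identification is made explicit the proof is immediate and the author's terse ``Apply \propref{L0}.'' is fully justified.
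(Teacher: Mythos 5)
Your proof is correct and follows essentially the same route as the paper: both directions reduce via \propref{L0} to the existence of $l$ with $\Delta\circ l\simeq(f,g)$, the converse is obtained by post-composing with the projections, and the easy direction rests on the homotopy $(f,f)\simeq(f,g)$ of whisker maps (which the paper packages as an appeal to \propref{Dhominvariant}).
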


\begin{proof} First note that by \propref{L0}, $D(f,g) = 0$ if and only there is a map $l\colon X \to Y$ such that $\Delta \circ l \simeq (f,g)$.

So if $D(f,g) = 0$, composing with the first projection we get $l = \prun \circ \Delta \circ l \simeq \prun \circ (f,g) = f$ and similarly we get $l \simeq g$, so $f\simeq g$.

Conversely, if $f\simeq g$, by \propref{Dhominvariant}, $D(f,g) = D(f,f)$, which is 0 since $\Delta \circ f = (f,f)$. 
\end{proof}

\begin{proposition}\label{symetrie} $D(f,g) = D(g,f)$.
\end{proposition}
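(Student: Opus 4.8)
$D(f,g) = D(g,f)$.

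Let me think about this.The plan is to exploit the swap homeomorphism on $Y\times Y$. Write $\tau\colon Y\times Y \to Y\times Y$ for the map interchanging the two factors; it is a homeomorphism, hence a homotopy equivalence, and it satisfies $\tau\circ\Delta = \Delta$. Moreover, since $\prun\circ\tau = \prdeux$ and $\prdeux\circ\tau = \prun$, the universal property of the product gives $\tau\circ(f,g) = (g,f)$: indeed $\prun\circ\tau\circ(f,g) = \prdeux\circ(f,g) = g$ and $\prdeux\circ\tau\circ(f,g) = \prun\circ(f,g) = f$. Verifying this last identity is the only genuine computation, and it is immediate from the universal property of $Y\times Y$.

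With these facts in hand, I would apply \propref{L2}. I take the right-hand square to be the one with vertical maps $\Delta\colon Y \to Y\times Y$ on both sides, top map $\id_Y$, and bottom map $\tau$; it commutes strictly because $\tau\circ\Delta = \Delta = \Delta\circ\id_Y$, and it is a homotopy pullback because its two opposite (horizontal) sides, $\id_Y$ and $\tau$, are homotopy equivalences. Completing the prism with $W = X$, lower-left map $g = (f,g)$, and lower composite $h = \tau\circ(f,g) = (g,f)$, all the hypotheses of \propref{L2} are met. Its equality clause, applicable precisely because $\tau$ and $\id_Y$ are homotopy equivalences, then yields $\liftcat_{(g,f)}(\Delta) = \liftcat_{(f,g)}(\Delta)$, which is exactly $D(g,f) = D(f,g)$.

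I expect no serious obstacle here: the argument is entirely formal once the swap map is introduced, relying only on the fact that $\tau$ fixes the diagonal and carries $(f,g)$ to $(g,f)$. The one point to handle with care is to invoke the correct clause of \propref{L2}, namely the homotopy-pullback (equality) version rather than the bare inequality; this is legitimate since $\id_Y$ and $\tau$ are homotopy equivalences. A cosmetic alternative is to cite \propref{inegessent}\,(4) in place of \propref{L2}, since both $f = \tau$ and $\zeta = \id_Y$ are homotopy equivalences there; either route closes the argument.
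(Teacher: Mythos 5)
Your proof is correct and follows essentially the same route as the paper: the paper also introduces the swap map $(\prdeux,\prun)$, observes it is a self-inverse homeomorphism fixing $\Delta$ and carrying $(f,g)$ to $(g,f)$, and concludes by the equality clause of \propref{L2}. No gaps.
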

\begin{proof}Consider the following homotopy commutative diagram:
$$\xymatrix@C=3pc{&Y\ar[d]_(.4)\Delta\ar@{=}[r]&Y\ar[d]^(.4)\Delta\\
X\ar[r]^(.4){(f,g)}\ar@/_1pc/[rr]_{(g,f)}&Y\times Y\ar[r]^{(\prdeux,\prun)}&Y \times Y
}$$
Note that $(\prdeux,\prun)$ switches the copies of $Y$. It is a homeomorphism since it is its own inverse, so the result follows from \propref{L2}.
\end{proof}

\begin{proposition}\label{compositionadroite}
For any maps $f, g\colon X \to Y$ and $h\colon W \to X$ in $\Jc$, we have
$$D(f\circ h, g \circ h) \leq D(f,g)$$
and if $h$ is a homotopy equivalence, we have the equality.
\end{proposition}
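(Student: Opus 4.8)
The plan is to reduce everything to \propref{composition} by recognizing that precomposing both $f$ and $g$ with $h$ on the right amounts to precomposing the single whisker map $(f,g)$ with $h$. So the whole proof should be formal, with no new homotopy-pullback construction needed beyond what is already available.

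First I would observe that the whisker map $(f\circ h,\, g\circ h)\colon W \to Y\times Y$ agrees, up to homotopy, with $(f,g)\circ h$. Indeed, composing $(f,g)\circ h$ with the two projections $\prun$ and $\prdeux$ returns $f\circ h$ and $g\circ h$ respectively, and these are exactly the components that define the whisker map $(f\circ h,\, g\circ h)$. By the universal property of the product $Y\times Y$ (viewed as a homotopy pullback), a map into $Y\times Y$ is determined up to homotopy by its two projections, so $(f\circ h,\, g\circ h)\simeq (f,g)\circ h$.

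Next, using this identity together with the homotopy invariance of the lifting category (\cororef{liftcathominvariant}), I would rewrite
$$D(f\circ h,\, g\circ h)=\liftcat_{(f\circ h,\, g\circ h)}(\Delta)=\liftcat_{(f,g)\circ h}(\Delta).$$
Now \propref{composition}, applied with the role of $f$ played by the map $(f,g)\colon X\to Y\times Y$, the role of $g$ played by $h\colon W\to X$, and $\iota$ played by the diagonal $\Delta$, gives at once
$$\liftcat_{(f,g)\circ h}(\Delta)\leq \liftcat_{(f,g)}(\Delta)=D(f,g),$$
which is the desired inequality. The same proposition states that this inequality becomes an equality when $h$ is a homotopy equivalence, which settles the second assertion.

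The argument is essentially bookkeeping once the identification of whisker maps is in place, so the only genuine step to get right is the very first one: verifying $(f\circ h,\, g\circ h)\simeq(f,g)\circ h$ from the universal property of the product. Everything after that is a direct invocation of \propref{composition} and \cororef{liftcathominvariant}.
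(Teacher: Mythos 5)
Your argument is correct and is essentially the paper's own proof: the paper likewise notes that $(f\circ h, g\circ h)\simeq (f,g)\circ h$ and then applies \propref{composition}. Your extra care in justifying the whisker-map identity via the universal property of $Y\times Y$ and in invoking \cororef{liftcathominvariant} is just a more detailed write-up of the same steps.
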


\begin{proof}Note that $(f\circ h, g\circ h) \simeq (f,g) \circ h$ and apply \propref{composition}. 
\end{proof}

\begin{proposition}\label{compositionagauche}
For any maps $f, g\colon W \to X$ and $h\colon X \to Y$ in $\Jc$, we have:
$$D(h\circ f, h \circ g) \leq D(f,g)$$
and if $h$ is a homotopy equivalence, we have the equality.
\end{proposition}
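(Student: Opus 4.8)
The plan is to mirror the proof of \propref{compositionadroite}, replacing the appeal to \propref{composition} by one to \propref{L2}. The key algebraic observation is that post-composition commutes with the diagonal construction: the whisker map $(h\circ f, h\circ g)$ factors as $(h\times h)\circ (f,g)$, and the diagonals are natural, namely $(h\times h)\circ \Delta_X = \Delta_Y\circ h$, where $\Delta_X\colon X \to X\times X$ and $\Delta_Y\colon Y\to Y\times Y$ send a point $z$ to $(z,z)$. Both identities hold strictly, not merely up to homotopy.

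With these in hand, I would assemble the (strictly, hence homotopy) commutative diagram
$$\xymatrix@C=3pc{
&X\ar[d]_{\Delta_X}\ar[r]_{h}&Y\ar[d]^{\Delta_Y}\\
W\ar[r]^{(f,g)}\ar@/_1pc/[rr]_{(h\circ f,\,h\circ g)}&X\times X\ar[r]^{h\times h}&Y\times Y
}$$
and check that it fits the hypotheses of \propref{L2}: the right square commutes by the naturality identity, and the lower triangle commutes because $(h\times h)\circ(f,g) = (h\circ f, h\circ g)$. Identifying the roles ($\kappa = \Delta_X$, $\iota = \Delta_Y$, the map $g$ of \propref{L2} equal to $(f,g)$, the map $f$ of \propref{L2} equal to $h\times h$, $\zeta = h$, and the map $h$ of \propref{L2} equal to $(h\circ f, h\circ g)$), \propref{L2} yields $\liftcat_{(h\circ f, h\circ g)}(\Delta_Y) \leq \liftcat_{(f,g)}(\Delta_X)$, which by \defref{defhomdist} is exactly $D(h\circ f, h\circ g) \leq D(f,g)$.

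For the equality statement when $h$ is a homotopy equivalence, I would invoke the second clause of \propref{L2}: if $h$ is a homotopy equivalence then so is $h\times h$, so both the map $f$ of \propref{L2} (here $h\times h$) and $\zeta$ (here $h$) are homotopy equivalences, whence the applied square is a homotopy pullback. \propref{L2} then gives the reverse inequality as well, hence the desired equality.

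I do not expect a genuine obstacle here; the only points requiring care are verifying that the relevant square is strictly commutative and recognizing that a product of homotopy equivalences is again a homotopy equivalence, so that the homotopy pullback hypothesis of \propref{L2} is met in the equality case.
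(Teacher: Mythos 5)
Your proof is correct and is essentially the paper's own argument: the paper applies \propref{L2} to exactly the same commutative diagram built from the naturality of the diagonal, $(h\times h)\circ\Delta_X=\Delta_Y\circ h$, together with $(h\times h)\circ(f,g)=(h\circ f,h\circ g)$. Your handling of the equality case via the parenthetical clause of \propref{L2} (with $f=h\times h$ and $\zeta=h$ both homotopy equivalences) is also the intended route.
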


\begin{proof}Apply \propref{L2} with the commutative diagram:
$$\xymatrix@C=3pc{
&X\ar[d]_\Delta\ar[r]_h&Y\ar[d]^\Delta\\
W\ar[r]^{(f,g)}\ar@/_1pc/[rr]_{(h \circ f, h \circ g)}&X\times X\ar[r]^{h\times h}&Y \times Y
}$$
\end{proof}

\begin{proposition}
For any maps $f, g\colon X \to Y$ and $f', g'\colon X' \to Y'$ in $\Jc$, we have: 
$$D(f\times f', g\times g') = D(f\times g',g\times f').$$\end{proposition}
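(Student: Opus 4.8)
The plan is to mimic the proof of \propref{symetrie}: exhibit a self-homeomorphism of the common codomain $(Y\times Y')\times(Y\times Y')$ that converts one whisker map into the other while fixing the diagonal, and then invoke the equality case of \propref{L2}.

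First I would set $W = Y\times Y'$ and describe points of $W\times W$ as quadruples $((a,b),(c,d))$ with $a,c\in Y$ and $b,d\in Y'$. Define $\sigma\colon W\times W\to W\times W$ by $\sigma((a,b),(c,d)) = ((a,d),(c,b))$, i.e.\ $\sigma$ swaps the two $Y'$-coordinates across the two copies and leaves the $Y$-coordinates untouched. Since $\sigma$ is its own inverse, it is a homeomorphism, hence a homotopy equivalence. Next I would record the two identities that drive the argument. Evaluating on the whisker map $(f\times f', g\times g')$, which sends $(x,x')$ to $((f(x),f'(x')),(g(x),g'(x')))$, the map $\sigma$ exchanges $f'(x')$ and $g'(x')$, producing $((f(x),g'(x')),(g(x),f'(x')))$; this is exactly $(f\times g', g\times f')(x,x')$, so $\sigma\circ(f\times f', g\times g') = (f\times g', g\times f')$. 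Evaluating on the diagonal $\Delta\colon W\to W\times W$, the point $\Delta(y,y') = ((y,y'),(y,y'))$ has equal $Y'$-coordinates, so $\sigma$ fixes it: $\sigma\circ\Delta = \Delta$.

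With these identities in hand, I would assemble the homotopy commutative diagram
$$\xymatrix@C=3pc{
&W\ar[d]_\Delta\ar[r]^{\id_W}&W\ar[d]^\Delta\\
X\times X'\ar[r]^{(f\times f', g\times g')}\ar@/_1pc/[rr]_{(f\times g', g\times f')}&W\times W\ar[r]^{\sigma}&W\times W
}$$
The top square commutes because $\Delta\circ\id_W = \Delta = \sigma\circ\Delta$, and the bottom triangle commutes because $\sigma\circ(f\times f', g\times g') = (f\times g', g\times f')$. Since both $\sigma$ and $\id_W$ are homotopy equivalences, \propref{L2} yields the equality $\liftcat_{(f\times g', g\times f')}(\Delta) = \liftcat_{(f\times f', g\times g')}(\Delta)$, which is precisely $D(f\times f', g\times g') = D(f\times g', g\times f')$.

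The only real care needed is the coordinate bookkeeping: confirming that the partial swap $\sigma$ genuinely intertwines the two whisker maps and simultaneously fixes the diagonal. I do not expect a serious obstacle here; the argument is a direct refinement of the symmetry proof, with the full swap $(\prdeux,\prun)$ replaced by the map $\sigma$ that exchanges only the $Y'$-factors between the two copies of $W$.
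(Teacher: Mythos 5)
Your proposal is correct and follows exactly the paper's own argument: the paper likewise uses the self-inverse homeomorphism $\sigma$ of $(Y\times Y')\times(Y\times Y')$ that leaves the copies of $Y$ in place and switches those of $Y'$, observes it fixes the diagonal and carries one whisker map to the other, and concludes by the equality case of \propref{L2}. Your explicit coordinate check of the two identities is just a more detailed write-up of the same proof.
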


\begin{proof}Consider the following communative diagram:
$$\xymatrix@C=3pc{&Y\times Y'\ar[d]_(.4)\Delta\ar@{=}[r]&Y\times Y'\ar[d]^(.4)\Delta\\
X\times X'\ar[r]^(.4){(f\!\times\!f'\!,g\!\times\!g')}\ar@/_1pc/[rr]_{(f\!\times\!g'\!,g\!\times\!f')}&(Y\!\times\!Y') \times (Y\!\times\!Y')\ar[r]^\sigma&(Y\!\times\!Y') \times (Y\!\times\!Y')
}$$
where $\sigma$ is the homeomorphism which leaves the copies of $Y$ in place and switches those of $Y'$.
The result is given by \propref{L2}.
\end{proof}

\begin{proposition}\label{timesh}For any maps $f,g\colon X \to Y$ and $h\colon X' \to Y'$ in $\Jc$, we have $$D(f\times h, g \times h) = D(f,g).$$
\end{proposition}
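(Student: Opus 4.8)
The plan is to peel off the common factor $h$ by recognising that it contributes a map with a homotopy section, which therefore does not affect the lifting category. Write $\Delta_Y$, $\Delta_{Y'}$ and $\Delta_{Y\times Y'}$ for the three relevant diagonals.

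First I would record two reindexing identities. Let $\sigma\colon (Y\times Y)\times(Y'\times Y')\to(Y\times Y')\times(Y\times Y')$ be the homeomorphism $((y_1,y_2),(y_1',y_2'))\mapsto((y_1,y_1'),(y_2,y_2'))$. A direct check gives $\sigma\circ(\Delta_Y\times\Delta_{Y'})=\Delta_{Y\times Y'}$ and $\sigma\circ((f,g)\times(h,h))=(f\times h,g\times h)$. Feeding these into the homotopy commutative diagram
$$\xymatrix@C=3pc{
&Y\times Y'\ar[d]_{\Delta_Y\times\Delta_{Y'}}\ar@{=}[r]&Y\times Y'\ar[d]^{\Delta_{Y\times Y'}}\\
X\times X'\ar[r]^-{(f,g)\times(h,h)}\ar@/_1pc/[rr]_{(f\times h,g\times h)}&(Y\times Y)\times(Y'\times Y')\ar[r]^-\sigma&(Y\times Y')\times(Y\times Y')
}$$
and applying \propref{L2} (the equality case, since $\sigma$ and the identity are homotopy equivalences), I get $D(f\times h,g\times h)=\liftcat_{(f,g)\times(h,h)}(\Delta_Y\times\Delta_{Y'})$.

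Next I would build the homotopy pullback of $(f,g)$ and $\Delta_Y$, with projection $q\colon P\to X$ (so $D(f,g)=\secat(q)$), and that of $(h,h)$ and $\Delta_{Y'}$, with projection $q_h\colon P_h\to X'$ (so $D(h,h)=\secat(q_h)$). The product of these two squares is again a homotopy pullback, a routine consequence of \propref{hpbkappaxlambda} and the Prism lemma, exactly as in the proofs of \lemref{foursquares} and \propref{hpbkappaxlambda}; its left-hand vertical is $q\times q_h\colon P\times P_h\to X\times X'$. Hence $\liftcat_{(f,g)\times(h,h)}(\Delta_Y\times\Delta_{Y'})=\secat(q\times q_h)$. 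To collapse the $h$-factor, note that since $h\simeq h$, \propref{separation} gives $\secat(q_h)=0$, so by Axiom~S0 the map $q_h$ admits a homotopy section $s_h\colon X'\to P_h$. Then $(q\times q_h)\circ(\id_P\times s_h)=q\times(q_h\circ s_h)\simeq q\times\id_{X'}$ and $(q\times\id_{X'})\circ(\id_P\times q_h)=q\times q_h$, so two applications of Axiom~S1 sandwich $\secat(q\times\id_{X'})=\secat(q\times q_h)$; and \cororef{secatixid} gives $\secat(q\times\id_{X'})=\secat(q)$. Chaining the equalities yields $D(f\times h,g\times h)=\secat(q)=D(f,g)$.

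The main obstacle I anticipate is the reverse inequality $D(f\times h,g\times h)\leq D(f,g)$. The forward bound $D(f,g)\leq D(f\times h,g\times h)$ is cheap (for instance via \propref{L2} applied to the factorisation $\Delta_Y\times\Delta_{Y'}=(\Delta_Y\times\id_{Y'\times Y'})\circ(\id_Y\times\Delta_{Y'})$), but the reverse is precisely where the fact $D(h,h)=0$ must enter. Packaging everything as the section-collapse $\secat(q\times q_h)=\secat(q)$ is what makes both inequalities fall out simultaneously, so the genuine work lies in the product-pullback identification of the second step.
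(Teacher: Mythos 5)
Your proof is correct, but it takes a genuinely different route from the paper's. The paper proves the two inequalities separately: for $D(f\times h,g\times h)\leq D(f,g)$ it factors $\Delta_{Y\times Y'}$ through $\Delta_Y\times\id_{Y'}$ via the duplication map $\delta=(\prun\times\id,\prdeux\times\id)$ and invokes \propref{L2} together with \propref{liftcatproduit}; for the reverse it writes $f=\prun\circ(f\times h)\circ\inun$ and $g=\prun\circ(g\times h)\circ\inun$ and applies \propref{compositionagauche} and \propref{compositionadroite}. You instead use the interchange homeomorphism $\sigma$ to identify $D(f\times h,g\times h)$ with $\liftcat_{(f,g)\times(h,h)}(\Delta_Y\times\Delta_{Y'})$, recognise this as $\secat(q\times q_h)$ via the product of the two defining homotopy pullbacks, and collapse the $h$-factor using the homotopy section of $q_h$ supplied by $D(h,h)=0$; this yields both inequalities at once. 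The one step you wave at, that a product of two homotopy pullbacks is a homotopy pullback, is not stated in the paper but is indeed derivable as you say: apply \propref{hpbkappaxlambda} twice (once to each factor, conjugating by the swap homeomorphism for the second) and paste the resulting squares with the Prism lemma. What your approach buys is uniformity: the identity $D(f\times f',g\times g')=\secat(q\times q')$ is symmetric in the two factors and would also feed directly into the paper's neighbouring proposition on $D(f\times f',g\times g')=D(f\times g',g\times f')$; the cost is the extra product-pullback lemma, whereas the paper's argument uses only results already on the shelf.
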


\begin{proof}Consider the following commutative diagram:
$$\xymatrix{
&&Y\times Y'\ar[d]_{\Delta\times\id_{Y'}}\ar@{=}[r]&Y\times Y'\ar[d]^{\Delta}\\
X\times X'\ar[rr]^(.45){(f,g)\times h}\ar@/_1pc/[rrr]_{(f\times h, g\times h)}&&(Y\!\times\! Y)\times Y'\ar[r]^(0.45)\delta&(Y\!\times\! Y')\times (Y\!\times\! Y')
}$$
where $\delta = (\prun\times\id, \prdeux\times\id)$ duplicates $Y'$. Applying \propref{L2} to it, we get $D(f\times h, g \times h) \leq \liftcat_{(f,g)\times h}\,(\Delta \times \id)$. But by \propref{liftcatproduit} the latter is $\liftcat_{(f,g)}\,(\Delta) = D(f,g)$. So we have $D(f\times h, g \times h) \leq D(f,g)$.

On the other hand, $f = \prun  \circ (f\times h) \circ \inun$ and $g = \prun  \circ (g\times h) \circ \inun$ where $\inun \colon X \to X \times X'$ is a section of the projection $X \times X' \to X$, and $\prun \colon Y \times Y' \to Y$ is the first projection. By \propref{compositionagauche} and \propref{compositionadroite}, we get $D(f,g) \leq D(f\times h, g \times h)$.
\end{proof}

\begin{proposition}\label{etonnant}For any maps $f, g \colon X \to Y$ in $\Jc$, 
$$D(f,g) = \liftcat_{(\id_X,f)} (\id_X,g).$$
\end{proposition}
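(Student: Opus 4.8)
The plan is to obtain the identity as an instance of the equality case of \propref{L2}, after producing one suitable homotopy pullback square; the symmetry of $D$ (\propref{symetrie}) then converts the argument $(g,f)$ into $(f,g)$.

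First I would consider the square
$$\xymatrix@C=3pc{
X\ar[r]^g\ar[d]_{(\id_X,g)}&Y\ar[d]^\Delta\\
X\times Y\ar[r]_{g\times\id_Y}&Y\times Y
}$$
which commutes, since $(g\times\id_Y)\circ(\id_X,g)=\Delta\circ g$ (both send $x$ to $(g(x),g(x))$). Granting for the moment that it is a homotopy pullback, I apply \propref{L2} with $\iota=\Delta$, $\kappa=(\id_X,g)$, $\zeta=g$ and bottom map $g\times\id_Y$, taking the lower-left map $X\to X\times Y$ to be $(\id_X,f)$. The bottom composite is then $h=(g\times\id_Y)\circ(\id_X,f)=(g,f)$, and the equality case of \propref{L2} yields
$$\liftcat_{(g,f)}(\Delta)=\liftcat_{(\id_X,f)}(\id_X,g).$$
The left-hand side is $D(g,f)$ by \defref{defhomdist}, and $D(g,f)=D(f,g)$ by \propref{symetrie}, which is exactly the claim.

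The only real step is to check that the square above is a homotopy pullback, and this I would do by pasting the projection squares beneath it:
$$\xymatrix@C=3pc{
X\ar[r]^g\ar[d]_{(\id_X,g)}&Y\ar[d]^\Delta\\
X\times Y\ar[r]^{g\times\id_Y}\ar@{->>}[d]_\prun&Y\times Y\ar@{->>}[d]^\prun\\
X\ar[r]_g&Y
}$$
The lower square is the homotopy pullback of \remref{hpbixid} (take $\iota=g$ and $Z=Y$). In the outer rectangle the vertical maps are $\prun\circ(\id_X,g)=\id_X$ and $\prun\circ\Delta=\id_Y$, so its left and right sides are homotopy equivalences and it is therefore a homotopy pullback (as recalled in \secref{tools}). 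By the Prism lemma (\propref{prism}) applied to this vertical pasting, the top square is a homotopy pullback as well.

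I expect the bookkeeping inside \propref{L2} to be the main place to take care: one must match the orientation (which maps play the roles of $\zeta$, $\kappa$ and the bottom map) to the square above, and must choose the lower-left map to be $(\id_X,f)$ rather than $(\id_X,g)$, so that the bottom composite is $(g,f)$; the resulting swap of arguments is precisely what \propref{symetrie} absorbs. Beyond this point the argument is a direct composition of the cited results, with no further recourse to topology.
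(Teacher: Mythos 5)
Your proof is correct and follows essentially the same route as the paper: the same square comparing $(\id_X,g)$ with $\Delta$ over $g\times\id_Y$, shown to be a homotopy pullback by pasting the projection square of \remref{hpbixid} and applying the Prism lemma, then \propref{L2} with lower-left map $(\id_X,f)$. The only difference is that you correctly record the bottom composite as $(g,f)$ and absorb the swap via \propref{symetrie}, whereas the paper labels that composite $(f,g)$ outright; your bookkeeping is the more careful of the two.
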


\begin{proof}Using the Prism lemma in the following commutative diagram:
$$\xymatrix@C=3pc{
  & X \ar[r]^g\ar[d]_(.4){(\id_X,g)} & Y\ar[d]_\Delta\ar@/^2pc/[dd]^{\id_X} \\
X\ar[r]^(0.4){(\id_X,f)}\ar@/_1pc/[rr]_(0.3){(g,f)} &X\times Y\ar[r]^{g \times \id_Y}\ar@{->>}[d]|(0.3)\hole^(0.65)\prun & Y\times Y\ar@{->>}[d]_\prun \\
  &X \ar[r]^g & Y
}$$
we see that the upper square is a homotopy pullback. The result follows from \propref{L2}.
\end{proof}

\begin{corollary}\label{minorationD}For any maps $f, g \colon X \to Y$, we have:
$$D(f,g) \geq \sup\{\liftcat_f (g), \liftcat_g (f) \}.$$
\end{corollary}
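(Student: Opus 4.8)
The plan is to deduce both inequalities from \propref{etonnant}, which already identifies $D(f,g)$ with a lifting category living over the larger space $X\times Y$, and then to push that down to $Y$ along the second projection. Since \propref{etonnant} is the immediately preceding result, this keeps the corollary genuinely a corollary.

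First I would reduce to a single inequality. By the symmetry of the homotopic distance, \propref{symetrie}, we have $D(f,g) = D(g,f)$, so it suffices to prove $D(f,g) \geq \liftcat_f(g)$; applying that same inequality to the pair $(g,f)$ then gives $D(g,f) = D(f,g) \geq \liftcat_g(f)$, and the two together yield the claimed bound on the supremum.

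Next I would rewrite the left-hand side via \propref{etonnant} as $D(f,g) = \liftcat_{(\id_X,f)}(\id_X,g)$, where $(\id_X,f)$ and $(\id_X,g)$ are maps $X \to X\times Y$. The key observation is that the second projection $\prdeux\colon X\times Y \to Y$ satisfies $\prdeux\circ(\id_X,f) = f$ and $\prdeux\circ(\id_X,g) = g$. This lets me assemble the strictly commutative diagram
$$\xymatrix@C=3pc{
& X \ar[d]_{(\id_X,g)} \ar[r]^{\id_X} & X \ar[d]^{g} \\
X \ar[r]^{(\id_X,f)} \ar@/_1pc/[rr]_{f} & X\times Y \ar[r]^{\prdeux} & Y
}$$
whose bottom composite is $f$ and whose square commutes since $g\circ\id_X = g = \prdeux\circ(\id_X,g)$. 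Applying \propref{L2} to this diagram (with $\zeta = \id_X$ and $\prdeux$ in the role of the lower horizontal map) delivers precisely $\liftcat_f(g) \leq \liftcat_{(\id_X,f)}(\id_X,g) = D(f,g)$.

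The only delicate point is orienting the data so that \propref{L2} produces the inequality in the intended direction: the lifting category over $X\times Y$ must occupy the role of the upper bound and the one over $Y$ that of the smaller quantity, with $\prdeux$ serving as the bottom-right map and $\id_X$ as the top map $\zeta$. Once the diagram is arranged this way the conclusion is immediate, the commutativity being strict rather than merely up to homotopy, so no new computation is required beyond the verification above.
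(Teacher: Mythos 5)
Your proof is correct and is essentially the paper's own argument: the same commutative diagram with $\prdeux\colon X\times Y\to Y$ and $\zeta=\id_X$, the same application of \propref{L2} together with \propref{etonnant}, and the symmetry reduction via \propref{symetrie} (which the paper leaves implicit). No gaps.
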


\begin{proof}Apply \propref{L2} to the following commutative diagram:
$$\xymatrix@C=3pc{
 & X \ar@{=}[r]\ar[d]_(.4){(\id_X,g)} &X\ar[d]^g\\
X\ar[r]^(0.4){(\id_X,f)}\ar@/_1pc/[rr]_f &X\times Y\ar[r]^\prdeux&Y.
}$$
\end{proof}

\begin{remark}The inequality can be strict, even when $\liftcat_f(g) = \liftcat_g(f)$. For instance consider the two projections $\prun, \prdeux\colon Y\times Y \to Y$. We have $\liftcat_{\prun} (\prdeux) = \liftcat_{\prdeux} (\prun) = 0$ while $D(\prun,\prdeux) = \TC(Y)$.\end{remark}

%%%%%%%%%%%%%%%%%%%%%%%%%%%%%%%%%%%%%%%%%%%%%%%%%%%%%%%%%
\section[Open covers]{Lifting category with open covers}\label{liftcatop}

In this section, we give a version of the lifting category based on open covers, which is an extension of the (more or less) original definition of the sectional category. 
We show that axioms S0 to S2 are well satisfied. 

\begin{definition}
For continuous maps $f\colon X \to Y$ and $\iota\colon A \to Y$, let us denote  $\liftcatop_f \, (\iota)$ the least integer $n$ for which there exists an open cover $(U_i)_{0\leq i \leq n}$ of $X$ and maps $l_i\colon U_i \to A$ such that $\iota \circ l_i \simeq f|_{U_i}$ for all $i$. Such a cover is said to be `categorical'. If no categorical cover exists for any $n$, we denote $\liftcatop_f\,(\iota) = \infty$.

If $f = \id_Y$, we write $\secatop \, (\iota)= \liftcatop_{\id_Y}\, (\iota)$.
\end{definition}

\begin{remark}\label{L0op} As there is only one cover of $X$ with one open set (which must be $X$ itself), $\liftcatop_f(\iota) = 0$ if and only if there is map $l\colon X \to A$ such that $\iota \circ l\simeq f$. 
\end{remark}

%\begin{remark}It is easy to see that if $X = \bigcup_\alpha X_\alpha$ where $X_\alpha$ are disjoint non-empty connected components of $X$, then $\liftcatop_f(\iota) = \sup_{\alpha}\, \{\liftcatop_{f_\alpha}(\iota)\}$ where $f_\alpha = f|_{X_\alpha}$. 
%\end{remark}

It is obvious that:\negskip
\begin{observation}\label{liftcatophominvariant} Let $f, f'\colon X \to Y$ and $\iota, \iota'\colon A \to Y$ be maps such that $f \simeq f'$ and $\iota \simeq \iota'$. Then $\liftcatop_f(\iota)= \liftcatop_{f'}(\iota')$.
\end{observation}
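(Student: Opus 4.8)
The plan is to prove \obsref{liftcatophominvariant} directly from the definition of $\liftcatop$, exploiting the fact that homotopic maps restrict to homotopic maps on any open subset and that the relation $\iota \circ l_i \simeq f|_{U_i}$ only involves maps up to homotopy. First I would observe that it suffices to show $\liftcatop_{f'}(\iota') \leq \liftcatop_f(\iota)$, since the reverse inequality follows by symmetry (swapping the roles of $f,\iota$ with $f',\iota'$, using $f'\simeq f$ and $\iota'\simeq\iota$).

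To prove this one-sided inequality, suppose $\liftcatop_f(\iota) = n$ is finite (if it is $\infty$ there is nothing to prove). Then there is a categorical open cover $(U_i)_{0\leq i\leq n}$ of $X$ together with maps $l_i\colon U_i \to A$ satisfying $\iota \circ l_i \simeq f|_{U_i}$ for each $i$. I would then argue that the \emph{same} open cover $(U_i)$, with the \emph{same} lifts $l_i$, witnesses $\liftcatop_{f'}(\iota') \leq n$. The key step is the chain of homotopies
$$
\iota' \circ l_i \simeq \iota \circ l_i \simeq f|_{U_i} \simeq f'|_{U_i},
$$
where the first homotopy comes from $\iota \simeq \iota'$ (post-composition by homotopic maps yields homotopic maps), the middle one is the hypothesis on the categorical cover, and the last one uses $f \simeq f'$ together with the fact that a homotopy $H\colon X \times I \to Y$ from $f$ to $f'$ restricts to a homotopy $H|_{U_i\times I}$ from $f|_{U_i}$ to $f'|_{U_i}$. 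Hence $\iota' \circ l_i \simeq f'|_{U_i}$ for all $i$, so $(U_i)$ is a categorical cover for the pair $(f',\iota')$ of size $n$, giving $\liftcatop_{f'}(\iota') \leq n = \liftcatop_f(\iota)$.

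Combining with the symmetric inequality gives the desired equality. I do not expect any genuine obstacle here: the only point that requires a (routine) justification is that homotopy is preserved under restriction to an open subset and under pre-/post-composition, which is why the statement is flagged as ``obvious'' in the text. The proof is essentially a bookkeeping argument transporting a categorical cover from one pair of maps to a homotopic pair, using that the defining condition is stated purely up to homotopy.
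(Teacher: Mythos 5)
Your proof is correct and is exactly the routine argument the paper has in mind when it labels this statement as ``obvious'' and omits the proof: transport the categorical cover and its lifts unchanged, using that homotopy is preserved under post-composition, under restriction to the open sets of the cover, and under transitivity. The argument also goes through verbatim in $\Topw$, since pointed homotopies restrict to the (basepoint-containing) open sets and compose with pointed maps.
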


It should be noticed that if we work in $\Topw$, all maps preserve basepoints, so each $U_i$ is supposed to include the basepoint of $X$; also homotopies are defined on the reduced cylinder, and preserve basepoints like all maps, in other words these are pointed homotopies.
However, if we look carefully at the proofs, we see that being in $\Top$ or in $\Topw$ makes no difference.
We'll come back to this issue in \secref{thepointedcase}.

\begin{observation}\label{deformation} If $f\colon X \to Y$ and $\iota\colon A \to Y$ are embeddings, then an open cover  $(U_i)_{0\leq i \leq n}$ of $X$ is categorical for $\liftcatop_f(\iota)$ if and only if for all $i$, $f(U_i)$ is deformable in $Y$ into $\iota(A)$. 
\end{observation}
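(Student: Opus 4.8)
The statement to prove is \obsref{deformation}: for embeddings $f\colon X \to Y$ and $\iota\colon A \to Y$, an open cover $(U_i)$ of $X$ is categorical for $\liftcatop_f(\iota)$ if and only if each $f(U_i)$ is deformable in $Y$ into $\iota(A)$.

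The plan is to unwind both sides of the equivalence to a statement about a single index $i$ and then produce an explicit translation between the lifting data $l_i\colon U_i \to A$ and a deformation of $f(U_i)$ inside $Y$. By definition, the cover is categorical exactly when, for each $i$, there is a map $l_i\colon U_i \to A$ with $\iota \circ l_i \simeq f|_{U_i}$. On the other side, ``$f(U_i)$ is deformable in $Y$ into $\iota(A)$'' should be read as the existence of a homotopy $H\colon f(U_i)\times I \to Y$ with $H_0$ the inclusion of $f(U_i)$ into $Y$ and $H_1$ having image inside $\iota(A)$. So the whole statement reduces, index by index, to the equivalence of these two pieces of data, and I would state this reduction first.

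For the forward direction, suppose we have $l_i\colon U_i \to A$ and a homotopy $\iota \circ l_i \simeq f|_{U_i}$. Since $f$ is an embedding, it restricts to a homeomorphism $U_i \to f(U_i)$; I would use its inverse to transport the homotopy $\iota \circ l_i \simeq f|_{U_i}$ from $U_i$ to $f(U_i)$. Precomposing with $(f|_{U_i})^{\inv}\colon f(U_i)\to U_i$ yields a homotopy on $f(U_i)$ from the inclusion $f(U_i)\hookrightarrow Y$ (coming from $f|_{U_i} \circ (f|_{U_i})^{\inv} = \inc$) to the map $\iota \circ l_i \circ (f|_{U_i})^{\inv}$, whose image lies in $\iota(A)$. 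This is precisely a deformation of $f(U_i)$ in $Y$ into $\iota(A)$.

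For the converse, suppose $f(U_i)$ is deformable in $Y$ into $\iota(A)$ via a homotopy $H$ ending in a map $e\colon f(U_i)\to \iota(A)$. Because $\iota$ is an embedding, $\iota$ restricts to a homeomorphism onto $\iota(A)$, so I can write $l_i = (\iota|^{\iota(A)})^{\inv} \circ e \circ f|_{U_i}\colon U_i \to A$; then $\iota \circ l_i = e \circ f|_{U_i}$, and running $H$ back (precomposed with $f|_{U_i}$) gives $\iota \circ l_i \simeq f|_{U_i}$, making the cover categorical. The only genuine subtlety — the step I expect to be the main (if modest) obstacle — is the careful bookkeeping of the two homeomorphisms induced by the embeddings $f$ and $\iota$ onto their images, and checking that in the pointed setting $\Topw$ the deformations and the maps $(f|_{U_i})^{\inv}$, $(\iota|^{\iota(A)})^{\inv}$ are basepoint-preserving; but since $f$ and $\iota$ are embeddings and everything is transported by homeomorphisms, no real difficulty arises, and the equivalence is essentially a direct translation.
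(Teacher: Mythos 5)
Your argument is correct and is essentially the paper's own: both identify $f(U_i)$ with $U_i$ and $\iota(A)$ with $A$ via the homeomorphisms induced by the embeddings, and translate the deformation into the homotopy $\iota\circ l_i\simeq f|_{U_i}$ (with $l_i=\iota^\inv\circ H(\tiret,1)$ continuous precisely because $\iota$ is an embedding). Your added remark about the pointed case is a harmless aside; the paper defers that issue to its Section~7.
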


Indeed if $f$ and $\iota$ are embeddings, for any open subset $U$ of $X$, we can identify $f(U)$ with $U$ and $\iota(A)$ with $A$.
 A deformation $H \colon f(U) \times I \cong U \times I \to Y$ such that $\forall~u \in U$,  $H(u, 0) = u$ and $H(u,1) \in \iota(A) \cong A$, corresponds to a homotopy between $H(\tiret, 0) = f|_{U}$ and $g = H(\tiret, 1)$ such that $g = \iota \circ l$ with $l\colon U \to A$,  $l(u) = \iota^\inv (g(u))$.

\begin{example}Let $\iota \colon  \cercle_d \to \cercle$ be the identity between the circle with the discrete topology and the circle with the usual topology. Obviously, as $\iota(\cercle_d) = \cercle$,  $\cercle$ is deformable in itself into $\iota(\cercle_d)$ with the static homotopy $H$. However, $H(\tiret,1) = \id_{\cercle}$ does not factors through $\cercle_d$; this is because $\iota$ is not an embedding. In fact, only constant maps  $\cercle \to \cercle_d$ are continuous, so only covers of $\cercle$ with open subsets contractible in $\cercle$ are categorical.  We need at least two such open subsets to cover $\cercle$.
%for instance: the circle without the north pole and the circle without the south pole.
So $\secatop(\iota) = 1$.  
\end{example}

Since we've defined $\secatop$ in terms of $\liftcatop$, we need to check that it's consistent with \defref{liftcatisasecat}:

\begin{lemma}\label{liftcatopisasecat}
If we have a homotopy pullback in $\Jc$:
$$\xymatrix@C=3pc{
P\ar[d]_q\ar[r]_p&A\ar[d]^\iota\\
X\ar[r]^f&Y
}$$
then $\liftcatop_f(\iota) = \secatop(q)$.
\end{lemma}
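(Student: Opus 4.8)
The plan is to prove the two inequalities $\liftcatop_f(\iota) \leq \secatop(q)$ and $\secatop(q) \leq \liftcatop_f(\iota)$ separately, using \lemref{liftlemma} as the bridge between the existence of local lifts to $A$ (which defines $\liftcatop_f$) and the existence of local homotopy sections of $q$ (which defines $\secatop(q) = \liftcatop_{\id_X}(q)$). The key observation is that both quantities are defined by covering $X$ with open sets carrying certain local data, and that the homotopy pullback square lets me convert one kind of local datum into the other \emph{over the same open set}, so the optimal covers match.

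First I would unwind the definitions: by \remref{L0op} applied with $f = \id_X$, $\secatop(q) = \liftcatop_{\id_X}(q)$ is the least $n$ such that $X$ is covered by open sets $U_i$, each admitting a map $s_i\colon U_i \to P$ with $q \circ s_i \simeq \id_X|_{U_i} = \inc_{U_i}$. Meanwhile $\liftcatop_f(\iota)$ is the least $n$ such that $X$ is covered by open sets $U_i$ each admitting $l_i\colon U_i \to A$ with $\iota \circ l_i \simeq f|_{U_i} = f \circ \inc_{U_i}$. For the inequality $\secatop(q) \leq \liftcatop_f(\iota)$, I take a categorical cover $(U_i)$ for $\liftcatop_f(\iota)$ with its lifts $l_i$; applying the second (converse) half of \lemref{liftlemma} with $W = U_i$, $g = \inc_{U_i}$, and the given homotopy pullback square, the relation $\iota \circ l_i \simeq f \circ \inc_{U_i}$ produces a whisker map $s_i = (\inc_{U_i}, l_i)\colon U_i \to P$ with $q \circ s_i \simeq \inc_{U_i}$. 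Hence the same cover is categorical for $\secatop(q)$, giving the inequality.

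For the reverse inequality $\liftcatop_f(\iota) \leq \secatop(q)$, I take a categorical cover $(U_i)$ for $\secatop(q)$ with local sections $s_i\colon U_i \to P$ satisfying $q \circ s_i \simeq \inc_{U_i}$; applying the first (easy, direct) half of \lemref{liftlemma} with $W = U_i$ and $g = \inc_{U_i}$, I obtain $l_i = p \circ s_i\colon U_i \to A$ with $\iota \circ l_i \simeq f \circ \inc_{U_i} = f|_{U_i}$. So the same cover is categorical for $\liftcatop_f(\iota)$. Combining the two inequalities yields the equality. Both directions reduce to applying the appropriate half of \lemref{liftlemma} to the restriction $\inc_{U_i}$ of $\id_X$ over each member of the cover, with the homotopy pullback hypothesis supplying the whisker maps in the harder direction.

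The main obstacle I anticipate is bookkeeping around restrictions and basepoints rather than anything conceptually deep: one must be careful that $f|_{U_i}$ really is $f \circ \inc_{U_i}$ so that \lemref{liftlemma} applies verbatim with $g = \inc_{U_i}$, and, if working in $\Topw$, that each $U_i$ contains the basepoint so that the inclusion and the whisker maps are pointed (this is exactly the caveat flagged in the text after \obsref{liftcatophominvariant}, and the remark there that $\Top$ versus $\Topw$ makes no difference should cover it). A minor point worth stating explicitly is that \lemref{liftlemma} is applied one open set at a time, so no global compatibility of the $l_i$ or $s_i$ across overlaps is needed; the correspondence is purely local, which is precisely why the two least integers coincide.
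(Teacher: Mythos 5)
Your proposal is correct and is essentially the paper's own proof: both apply \lemref{liftlemma} with $g=\inc_i\colon U_i\hookrightarrow X$ over each open set of a cover, using the easy direction for one inequality and the homotopy-pullback (whisker map) direction for the other, so that a cover is categorical for $\liftcatop_f(\iota)$ if and only if it is categorical for $\secatop(q)$. The only difference is presentational --- you split the biconditional into two inequalities, while the paper states it as a single equivalence.
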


\begin{proof}Apply \lemref{liftlemma} with $g = \inc_i \colon U_i \hookrightarrow X$, the inclusion of any $U_i$ into $X$.
There is a map $l_i\colon U_i \to A$ such that $\iota \circ l_i \simeq f \circ \inc_i = f|_{U_i}$ if and only if
there is a map $s_i \colon U_i \to P$ such that $q \circ s_i \simeq \inc_i = \id_X|_{U_i}$. So a cover is categorical for $\liftcatop_f(\iota)$ if and only if it is categorical for $\secatop(q)$.
\end{proof}

\begin{proposition}
The integer $\secatop$  satisfies Axioms S0 to S2.
\end{proposition}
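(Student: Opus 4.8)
The plan is to verify the three axioms separately, leaning throughout on the open-cover description of $\liftcatop$ and on the facts already proved for it. For \textbf{S0} I would do nothing more than unwind definitions: since $\secatop(\iota) = \liftcatop_{\id_Y}(\iota)$, applying \remref{L0op} with $f = \id_Y$ gives that $\secatop(\iota) = 0$ holds exactly when there is a map $l\colon Y \to A$ with $\iota \circ l \simeq \id_Y$, i.e. exactly when $\iota$ admits a homotopy section. So S0 is immediate.

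For \textbf{S1}, starting from the homotopy commutative triangle with $\iota \circ \zeta \simeq \kappa$, I would take a categorical cover witnessing $\secatop(\kappa) = n$: an open cover $(U_i)_{0 \le i \le n}$ of $X$ together with maps $m_i\colon U_i \to B$ satisfying $\kappa \circ m_i \simeq \inc_i$, where $\inc_i\colon U_i \hookrightarrow X$. The idea is that the \emph{same} cover should work for $\iota$ once the liftings are post-composed with $\zeta$: setting $l_i := \zeta \circ m_i$, the hypothesis yields $\iota \circ l_i = \iota \circ \zeta \circ m_i \simeq \kappa \circ m_i \simeq \inc_i$, so $(U_i)$ is categorical for $\secatop(\iota)$ and hence $\secatop(\iota) \le \secatop(\kappa)$.

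For \textbf{S2} I would first invoke \lemref{liftcatopisasecat}, which for the given homotopy pullback identifies $\secatop(q) = \liftcatop_f(\iota)$; it then suffices to establish the ``op'' analogue of \obsref{L1}, namely $\liftcatop_f(\iota) \le \secatop(\iota)$. To prove this inequality I would take a categorical cover $(V_j)$ of $Y$ for $\secatop(\iota)$, with maps $m_j\colon V_j \to A$ and $\iota \circ m_j \simeq \inc_{V_j}$, and pull it back along $f$: the preimages $U_j := f^{-1}(V_j)$ form an open cover of $X$ by continuity of $f$, and since $f(U_j) \subseteq V_j$ the composites $l_j := m_j \circ f|_{U_j}$ are defined and satisfy $\iota \circ l_j \simeq \inc_{V_j} \circ f|_{U_j} = f|_{U_j}$. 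Thus $(U_j)$ is categorical for $\liftcatop_f(\iota)$, which gives the inequality and hence S2.

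The only step demanding genuine care is S2: one must both use \lemref{liftcatopisasecat} (ultimately \lemref{liftlemma}) to rewrite the lifting of $\id_X$ through $q$ as a lifting of $f$ through $\iota$, and then check that pulling the cover $(V_j)$ back along $f$ restricts the chosen homotopies $\iota \circ m_j \simeq \inc_{V_j}$ correctly to the preimages $U_j$. Everything else is routine bookkeeping with open covers, and S0 and S1 require no homotopy-pullback input at all.
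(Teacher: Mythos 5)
Your proposal is correct and follows essentially the same route as the paper: S0 by unwinding \remref{L0op}, S1 by post-composing the local liftings with $\zeta$, and S2 by combining \lemref{liftcatopisasecat} with the pullback of a categorical cover along $f$ (precomposing the homotopies with $f|_{U_j}$). No gaps.
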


\begin{proof}
{\bf S0.} This is \remref{L0op} with $f = \id_X$.

{\bf S1.} Let $\kappa \simeq \iota \circ \zeta$, as in the statement of Axiom~S1. Assume we have an open cover $(U_i)_{0\leq i \leq n}$ of $X$ and maps $s_i \colon U_i \to B$ such that $\kappa \circ s_i \simeq \id_X$. This means that $\iota \circ (\zeta \circ s_i) \simeq \id_X$. So if an open cover is categorical for $\secatop(\kappa)$, it is categorical for $\secatop(\iota)$.

{\bf S2.} Consider a homotopy pullback as in  the statement of Axiom~S2. Assume we have an open cover $(V_i)_{0\leq i \leq n}$ of $Y$ and maps $s_i \colon V_i \to A$ such that $\iota \circ s_i \simeq \id_Y$. Let $U_i = f^{-1}(V_i)$. Then  $(U_i)_{0\leq i \leq n}$ is an open cover of $X$, and for any $i$, we have $\iota\circ (s_i \circ f) \simeq f$. So if an open cover is categorical for $\secatop(\iota)$, it is categorical for $\liftcatop_f(\iota)$, which is $\secatop(q)$ by \lemref{liftcatopisasecat}.
\end{proof}

%%%%%%%%%%%%%%%%%%%%%%%%%%%%%%%%%%%%%%%%%%%%%%%%%%%%%%%%%
\section{Homotopic distance with open covers}\label{homdistop}

Once we have defined $\liftcatop$ (in the previous section), we automatically obtain a $\Dop$, coming from \defref{defhomdist}, and a $\TCop$, coming from \defref{defTC}:

\begin{definition}\label{defDop}
For any space $Y$ and for any maps $f, g\colon X \to Y$ in $\Jc$,\\
 $\TCop(Y) = \secatop(\Delta)$ and $\Dop(f,g) = \liftcatop_{(f,g)} (\Delta)$  where $\Delta\colon Y \to Y \times Y$ is the diagonal map.
\end{definition}

\begin{remark}From \obsref{deformation},  $\TCop(Y)$ is the least integer $n$ such that $Y\times Y$ can be covered by $n+1$ open subsets, each of them being deformable in $Y\times Y$ into $\Delta(Y)$.
\end{remark}

Originally, the topological complexity was defined by Farber \cite{Far03} and the homotopic distance was defined by Mac{\'i}as-Virg{\'o}s and Mosquera-Lois \cite{MacMos21}.
We want to check that \defref{defDop} is consistent with these definitions.

\medskip
Actually, there is an alternative to the diagonal which is the `(free) path fibration' $\pi$ in the following diagram:
$$\xymatrix{
   Y\hspace{2mm} \ar[rr]^\Delta  \ar[rd]^s && Y\times Y\\
   & Y^I \ar@{->>}[ru]_\pi \ar@/^0.8pc/[lu]^e
}$$
In this diagram, $\Delta = \pi\circ s$, $e \circ s = \id_Y$ and $s \circ e \simeq \id_{Y^I}$.
The map $\pi$ is a `fibration replacement' of $\Delta$.
(The space $Y^I$ is well-pointed if $Y$ is well-pointed, see \cite[Lemma 4]{Str72}.)

\begin{lemma}\label{pathfibration}
With these data, we have $D(f,g) = \liftcat_{(f,g)}\,(\Delta) = \liftcat_{(f,g)} \,(\pi)$ and $\TC(Y) = \secat\,(\Delta) = \secat \,(\pi)$.
\end{lemma}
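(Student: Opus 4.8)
The plan is to treat the two displayed chains separately, observing first that the outer equalities $D(f,g) = \liftcat_{(f,g)}(\Delta)$ and $\TC(Y) = \secat(\Delta)$ already hold by \defref{defhomdist} and \defref{defTC} respectively, so that the only content of the lemma is the replacement of $\Delta$ by its fibration replacement $\pi$ throughout. Everything rests on the remark that $s$ and $e$ are mutually inverse homotopy equivalences: the hypotheses $e\circ s = \id_Y$ and $s\circ e \simeq \id_{Y^I}$ say exactly that $s$ is a homotopy equivalence with inverse $e$. Moreover $\Delta = \pi\circ s$, and hence $\pi = \pi\circ\id_{Y^I} \simeq \pi\circ s\circ e = \Delta\circ e$.

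For the equality $\secat(\Delta)=\secat(\pi)$ I would apply Axiom~S1 twice to the two homotopy commutative triangles over $Y\times Y$. The factorization $\Delta = \pi\circ s$ (taking $\iota=\pi$, $\zeta=s$, $\kappa=\Delta$) yields $\secat(\pi)\leq\secat(\Delta)$, while $\pi\simeq\Delta\circ e$ (taking $\iota=\Delta$, $\zeta=e$, $\kappa=\pi$) yields $\secat(\Delta)\leq\secat(\pi)$; the two inequalities give the equality. Alternatively, this is \propref{inegessent}\,(4) applied to the square below with bottom map $\id_{Y\times Y}$.

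For the equality $\liftcat_{(f,g)}(\Delta)=\liftcat_{(f,g)}(\pi)$ I would apply \propref{L2} to the square
$$\xymatrix@C=3pc{
Y\ar[d]_\Delta\ar[r]^s&Y^I\ar[d]^\pi\\
Y\times Y\ar@{=}[r]&Y\times Y
}$$
with the whisker map $(f,g)\colon X\to Y\times Y$ as the source running along the bottom (so $\iota=\pi$, $\kappa=\Delta$, $\zeta=s$, and the lower map of that proposition is $\id_{Y\times Y}$). Since both the bottom map $\id_{Y\times Y}$ and the top map $s$ are homotopy equivalences, the square is a homotopy pullback, so the equality clause of \propref{L2} gives $\liftcat_{(f,g)}(\pi)=\liftcat_{(f,g)}(\Delta)$.

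The step I expect to require the most care is this last one: one cannot simply invoke \cororef{liftcathominvariant}, because $\Delta$ and $\pi$ have different domains and so are not homotopic in the sense required there. The essential point is that the lifting category is insensitive to replacing the target map by a homotopy-equivalent one \emph{even across a change of domain}, and it is precisely \propref{L2}, through the criterion that a homotopy commutative square with two opposite sides homotopy equivalences is a homotopy pullback, that encodes this. I would therefore make sure to justify that the square above is genuinely a homotopy pullback before reading off the equality.
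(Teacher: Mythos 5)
Your proof is correct and is essentially the paper's own argument spelled out: the paper's proof consists of the single line ``This is a special case of \propref{L2}'', and your application of \propref{L2} to the square with top map $s$, bottom map $\id_{Y\times Y}$, and source $(f,g)$ (together with the observation that $\secat = \liftcat_{\id}$, handled equivalently by your double use of Axiom~S1) is exactly that special case.
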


\begin{proof}%This is a special case of \propref{L2}.
These are special cases of \cororef{liftcathominvariant} and \propref{inegessent}(4).
\end{proof}

These characterizations of homotopic distance and topological complexity as lifting categories to the path fibration are the link with the original definitions.
Indeed, $\secatop(\pi)$ is exactly Farber's definition of topological complexity, and for the homotopic distance, the equality $\Dop(f,g) = \liftcatop_{(f,g)}(\pi)$ of \lemref{pathfibration} allows us to prove the following result that matches Mac{\'i}as-Virg{\'o}s and Mosquera-Lois's definition of homotopic distance:

\begin{proposition}\label{dopeqliftcatop} Let $f, g\colon X \to Y$ be maps in $\Jc$. Then $\Dop(f,g)$ is the least integer $n$ for which there exists an open cover $(U_i)_{0\leq i \leq n}$ of $X$ such that $f|_{U_i} \simeq g|_{U_i}$ for all $i$. If no such cover exists for any $n$, $\Dop(f,g) = \infty$.
\end{proposition}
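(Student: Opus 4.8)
The plan is to replace the diagonal $\Delta$ by the path fibration $\pi\colon Y^I \to Y\times Y$ and to exploit that $\pi$ is a genuine fibration, so that the defining homotopies can be straightened into equalities. Since $\secatop$ satisfies Axioms S0--S2, all the abstract consequences of \secref{liftingcategory}---in particular \propref{L2}, and hence \lemref{pathfibration}---apply verbatim to the open-cover versions. Thus I would begin by recording the equality $\Dop(f,g) = \liftcatop_{(f,g)}(\pi)$.

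Next I would unpack the right-hand side by definition: $\liftcatop_{(f,g)}(\pi)$ is the least $n$ admitting an open cover $(U_i)_{0\leq i\leq n}$ of $X$ together with maps $l_i\colon U_i \to Y^I$ such that $\pi\circ l_i \simeq (f,g)|_{U_i}$. The heart of the argument is to show that, for a fixed open set $U_i$, such an $l_i$ exists if and only if $f|_{U_i}\simeq g|_{U_i}$; granting this, categorical covers for $\liftcatop_{(f,g)}(\pi)$ coincide with covers satisfying $f|_{U_i}\simeq g|_{U_i}$ for all $i$, and taking the least $n$ (with the convention $\infty$ when none exists) yields the claim.

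For the crucial upgrade from $\simeq$ to $=$, I would invoke the homotopy lifting property of $\pi$. Given $l_i$ with $\pi\circ l_i \simeq (f,g)|_{U_i}$, choose a homotopy $H\colon U_i\times I \to Y\times Y$ from $H_0 = \pi\circ l_i$ to $H_1 = (f,g)|_{U_i}$; lifting $H$ along $\pi$ with initial condition $l_i$ produces $\tilde H$ with $\tilde H_0 = l_i$ and $\pi\circ\tilde H = H$, so that $l_i' := \tilde H_1$ satisfies $\pi\circ l_i' = (f,g)|_{U_i}$ exactly. By the exponential law, a map $l_i'\colon U_i \to Y^I$ with $\pi\circ l_i' = (f,g)|_{U_i}$ is precisely a continuous family of paths from $f(u)$ to $g(u)$, that is, a homotopy $f|_{U_i}\simeq g|_{U_i}$. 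Conversely, any homotopy $f|_{U_i}\simeq g|_{U_i}$ yields, again by the exponential law, a map $l_i$ with $\pi\circ l_i = (f,g)|_{U_i}$, hence in particular $\pi\circ l_i \simeq (f,g)|_{U_i}$.

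The step I expect to require the most care is this fibration upgrade: one must apply the homotopy lifting property in the correct direction---lifting $H$ with $l_i$ prescribed at time $0$---and then correctly translate maps into $Y^I$ into homotopies via the exponential law. In the well-pointed setting one also checks that $Y^I$ is well-pointed and that $\pi$ remains a fibration, but as already noted the argument is insensitive to whether we work in $\Top$ or $\Topw$. (An alternative, avoiding $\pi$ altogether, is to argue directly with $\Delta$: composing $\Delta\circ l_i \simeq (f,g)|_{U_i}$ with the two projections forces $f|_{U_i}\simeq l_i\simeq g|_{U_i}$, while conversely taking $l_i = f|_{U_i}$ and using a homotopy $f|_{U_i}\simeq g|_{U_i}$ in the second coordinate gives $\Delta\circ l_i \simeq (f,g)|_{U_i}$.)
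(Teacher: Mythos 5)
Your proposal is correct and follows essentially the same route as the paper: reduce to $\liftcatop_{(f,g)}(\pi)$ via \lemref{pathfibration}, use the homotopy lifting property of $\pi$ to replace $\pi\circ l_i\simeq (f,g)|_{U_i}$ by an equality, and translate back and forth with the exponential law. The parenthetical direct argument with $\Delta$ is also valid, but the main line of reasoning matches the paper's proof.
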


\begin{proof}If we have a homotopy $H_i \colon U_i \times I \to Y$ between $f|_{U_i}$ and $g|_{U_i}$, then we can define $l_i \colon U_i \to Y^I$ by $l_i(x) = H_i(x, \tiret)$ and we have $\pi \circ l_i = (f,g)|_{U_i}$.

Conversely if we have a map $l_i \colon U_i \to Y^I$ such that $\pi \circ l_i \simeq (f,g)|_{U_i}$, then we also have a map $l'_i \colon U_i \to Y^I$ such that $\pi \circ l'_i = (f,g)|_{U_i}$ (because $\pi$ is a fibration), and we can define a homotopy $H_i$ between $f|_{U_i}$ and $g|_{U_i}$ by $H_i(x,t) = l'_i(x)(t)$.
\end{proof}

%We can minorate $\Dop$ with $\liftcatop$ as follows:

%\begin{proposition}\label{minorationD}For any maps $f, g \colon X \to Y$, we have:
%$$\Dop(f,g) \geq \sup\{\liftcatop_f (g), \liftcatop_g (f) \}.$$
%\end{proposition}

%\begin{proof}Assume we have an open cover $(U_i)_{0\leq i \leq n}$ of $X$ that is categorical for $\Dop(f,g)$, i.e.\ $f|_{U_i} \simeq g|_{U_i}$ for all $i$. Let $l_i$ be the inclusion of $U_i$ in $X$, then $g \circ l_i = g|_{U_i} \simeq f|_{U_i}$, so this cover is also categorical for $\liftcatop_f (g)$. The same goes for $\liftcatop_g (f)$.
%\end{proof}

%%%%%%%%%%%%%%%%%%%%%%%%%%%%%%%%%%%%%%%%%%%%%%%%%%%%%%%%%
\section{The pointed case}\label{thepointedcase}

As noted above, in $\Top$, the open sets of a categorical cover are not supposed to include any specific point, while in $\Topw$ they must include the basepoint; also in $\Top$, homotopies are free while in $\Topw$, homotopies are pointed.

By the way, we don't want to work in the category $\Topp$ of pointed spaces and maps, because there, the usual pointed homotopy does not fit entirely with the `model category' framework. See \cite[Chapter 10]{Str11}.
 
 \medskip
 {\em  A priori}, (apart from the trivial case where $x_0$  is isolated) there is no reason why the lifting category should be equal whether it is calculated in $\Top$ or in $\Topw$; the latter could be greater than the former. To avoid ambiguity, we will write $\liftcat^\circ$  when the lifting category is calculated in $\Top$, and $\liftcat^\ast$ when it is calculated in $\Topw$ (we drop the superscript ``\raisebox{2pt}{\footnotesize op}'').
 Fortunately, we have the following result:
 
 \begin{theorem}\label{pointedcase} Let $f\colon X\to Y$, $X$ normal, and $\iota\colon A \to Y$ be maps in $\Topw$.
 If $\liftcat^\circ_f (\iota) = n > 0$ when $f$ and $\iota$ are regarded in $\Top$ (via the forgetful functor), then $\liftcat^\ast_f(\iota) = n$. 
 \end{theorem}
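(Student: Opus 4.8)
The inequality $\liftcat^\ast_f(\iota) \ge \liftcat^\circ_f(\iota) = n$ is immediate: a pointed categorical cover (open sets containing $x_0$, pointed lifts, based homotopies) is in particular a free categorical cover of the same size, so applying the forgetful functor gives $\liftcat^\circ_f(\iota) \le \liftcat^\ast_f(\iota)$. The whole content is therefore the reverse inequality $\liftcat^\ast_f(\iota) \le n$. The plan is to start from a free categorical cover $(U_i)_{0\le i\le n}$ with lifts $l_i\colon U_i \to A$ and free homotopies $H_i\colon \iota\circ l_i \simeq f|_{U_i}$, and to manufacture $n+1$ open sets, each containing $x_0$, carrying \emph{pointed} lifts and based homotopies.

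First I would normalize the data. After a pointed fibration replacement of $\iota$ (legitimate by homotopy invariance, \cororef{liftcathominvariant}, and because the mapping path space of a pointed map between well-pointed spaces is again well-pointed, cf.\ \cite[Lemma 4]{Str72}), I may assume $\iota$ is a pointed fibration, so each $\iota\circ l_i \simeq f|_{U_i}$ becomes an equality $\iota\circ l_i = f|_{U_i}$. The genuinely useful consequence of well-pointedness of $X$ is a Strøm structure for the closed cofibration $\singpb \hookrightarrow X$: an open neighborhood $N$ of $x_0$ and a homotopy $D\colon X \times I \to X$, rel $x_0$, with $D_0 = \id_X$ and $D_1|_N \equiv x_0$. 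Pushing $D$ forward by $f$ exhibits $f|_N$ as based-homotopic to the constant map $y_0 = \iota(a_0)$; hence $N$ is itself a pointed categorical set, with the constant lift $c_{a_0}$ and based homotopy $f\circ D$.

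Next I would enlarge every $U_i$ to $V_i := U_i \cup N'$, where $\singpb \subseteq N' \subseteq \overline{N'} \subseteq N$ is a smaller neighborhood furnished by normality (the point $x_0$ is closed by well-pointedness, so $\singpb$ and $X\setminus N$ are disjoint closed sets). The family $(V_i)_{0\le i\le n}$ still has $n+1$ members, still covers $X$, and now each member contains the basepoint. On each $V_i$ I must assemble a single pointed lift $m_i$ and a based homotopy $G_i\colon \iota\circ m_i \simeq f|_{V_i}$ from the free data $(l_i, H_i)$ on $U_i$ and the based data $(c_{a_0}, f\circ D)$ on $N'$. Note that whatever $G_i$ I build will automatically be \emph{based}, precisely because $x_0 \in N'$, where the contraction forces $G_i(x_0,-) \equiv y_0$; so the difficulty is not the homotopy at the basepoint but the construction of $m_i$.

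The hard part is exactly this assembly over the overlap $U_i \cap N'$, where $l_i$ and the constant lift $c_{a_0}$ need not agree. Here I would use normality once more to choose nested Urysohn neighborhoods of $x_0$, and the homotopy extension property of $\singpb \hookrightarrow X$ to deform $l_i$, inside a collar of $x_0$ in $V_i$, down to the based value $a_0$ while leaving it untouched outside $N$; the fibration hypothesis on $\iota$ is what lets this deformation of the lift be carried along by a deformation of the accompanying based homotopy. This step is also where the hypothesis $n>0$ is essential: having more than one set leaves room to confine the correction near $x_0$ to the overlap with another member of the cover, whereas for $n=0$ the single path $H_0(x_0,-)$ from $y_0$ to $\iota\,l_0(x_0)$ records a genuine obstruction to basing the unique lift, and the conclusion can fail. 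Once the $m_i$ and $G_i$ are in place, $(V_i, m_i, G_i)_{0\le i\le n}$ is a pointed categorical cover of size $n+1$, yielding $\liftcat^\ast_f(\iota) \le n$ and hence the asserted equality.
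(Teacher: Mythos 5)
Your reduction to the inequality $\liftcat^\ast_f(\iota)\leq n$ and your identification of the two key ingredients --- the halo $N$ of $x_0$ contractible rel $x_0$ supplied by well-pointedness, and normality to shrink and separate neighbourhoods --- agree with the paper. The gap is in the step you yourself flag as ``the hard part''. You enlarge each $U_i$ to $V_i=U_i\cup N'$ and must then produce a single continuous pointed lift $m_i\colon V_i\to A$ interpolating between $l_i$ on $U_i$ and the constant $a_0$ on $N'$. Your proposed mechanism --- deform $l_i$ near $x_0$ down to the value $a_0$ using the homotopy extension property and the fibration structure --- requires a path in $A$ (after your fibration replacement, a path in the fibre $\iota^{-1}(y_0)$) from $l_i(x_0)$ to $a_0$. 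No such path need exist: nothing in the hypotheses makes $A$, or the fibre over $y_0$, path-connected (think of $\iota$ a covering map with $l_i$ landing near $x_0$ in the wrong sheet), and the homotopy lifting property lets you prescribe where a lifted path starts, not that it ends at $a_0$. So the deformation you describe cannot be carried out in general, and these are precisely the cases your argument leaves unproved.

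The paper sidesteps this by never interpolating in $A$: it replaces $U_i$ by $U_i'=(U_i\setminus\overline{S_i})\cup S_i$, where $S_i$ is an open neighbourhood of $x_0$ whose \emph{closure} is removed from $U_i$, so that $U_i'$ is a union of two disjoint open (hence clopen in $U_i'$) pieces. The new lift is $l_i$ on one piece and the constant $a_0$ on the other; continuity is automatic and no path from $l_i(x_0)$ to $a_0$ is needed. The price is that the boundary points $\overline{S_i}\setminus S_i$ are deleted from $U_i$ and the cover must be repaired; this is done by taking two nested neighbourhoods $W\subsetneq\overline{W}\subsetneq V$ of $x_0$ and setting $S_0=W$ and $S_i=V$ for $i>0$, so that $\overline{V}\setminus V$ is recaptured by $U_0'$ and $\overline{W}\setminus W$ by $U_1'$. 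That is the precise place where $n>0$ enters --- a second set is needed to absorb one of the two boundaries --- whereas your appeal to $n>0$ (``room to confine the correction to the overlap with another member'') is not attached to any concrete construction. Your easy inequality $\liftcat^\circ_f(\iota)\leq\liftcat^\ast_f(\iota)$ and the observation that $N$ is itself a pointed categorical set are correct and consistent with \remref{caszero}.
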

 
 In this theorem and others below, we require $X$ to be normal, i.e.\ any two disjoint closed subsets of $X$ have disjoint open neighborhoods; we do {\em not} require $X$ to be Hausdorff. Normality is not a homotopy invariant, it can be annoying. But since the lifting category is a homotopy invariant, we can replace ``normal'' by ``with same (strong) homotopy type as a normal space''.

\begin{proof} 
We can assume that $X$ is connected; if it were not, it would suffice to stick to the connected component of its basepoint $x_0$.

By hypothesis, there exists an open cover $(U_i)_{0\leq i \leq n}$ of $X$, $U_i \subsetneq X$, with maps $l_i\colon U_i \to A$ and homotopies $H_i\colon U_i \times I \to Y$ between  $f|_{U_i}$ and $\iota \circ l_i$ for all $i$. But not every $U_i$ contains the basepoint $x_0$ of $X$, the maps $l_i$ are not pointed, and the homotopies $H_i$ are free. 

As $X$ is well-pointed, $\{x_0\}$  is closed in $X$ and there is an open neighbourhood $\N$ of $x_0$ deformable in $X$ into $\{x_0\}$ with some pointed homotopy $G$ rel $\{x_0\}$. We may assume that $x_0 \not\in \overline{U_i}\backslash U_i$ for any $i$ because if $x_0 \in \overline{U_i}\backslash U_i$ for some $i$, the normality of $X$ allows us to replace $U_i$ with a bit smaller open subset, keeping the covering of $X$. Assume that $x_0 \in U_i$ for $i  \leq k$ ($0 \leq k \leq n$) and  $x_0 \not\in U_i$ for $i > k$. Let:
$$\O = \N \cap U_0 \cap \dots \cap U_k \cap \complement \overline{U_\kpu} \cap \dots \cap \complement \overline{U_n} \subsetneq X.$$
The basepoint $x_0$ belongs to $\O$, and by normality of $X$, we have two open subsets $V$ and $W$ such that:
$$ x_0 \in W \subsetneq \overline{W} \subsetneq V \subsetneq \overline{V} \subsetneq \O.$$
(The connectedness of $X$ ensures that all these inclusions are strict.) Let $S_0 = W$ and $S_i = V$ for each $i > 0$, and let $U_i' = (U_i\backslash \overline{S_i}) \cup S_i$ for all $i$. Note that $U_i\backslash \overline{S_i}$ and $S_i$ are disjoint open sets, the latter containing $x_0$.  Actually, if $i > k$, $\overline{S_i} = \overline{V}\subset \complement \overline{U_i}$, so $U_i \backslash \overline{S_i} = U_i$. The opens $(U_i')_{0\leq i\leq n}$ cover $X$ because $U_i \subset U_i'$ for $i >k$, and for $i \leq k$, the difference between $U_i'$ and $U_i$ is $\overline{S_i} \backslash S_i$, but $\overline{V}\backslash V \subset U_0'$ and  $\overline{W}\backslash W \subset U_1'$, so it's good. We can now define pointed homotopies $K_i \colon U_i' \times I \to X$:
\[ 
K_i(x,t) = \begin{cases} 
H_i(x,t) & \text{if } x \in U_i\backslash \overline{S_i}   \\
(f \circ G) (x,t)& \text{if } x \in S_i.
\end{cases}
\]
We have $K_i(\tiret,0) = f|_{U_i'}$. On the other hand, if $x \in U_i\backslash \overline{S_i}$, $K_i(x,1) = H_i(x,1) = (\iota \circ l_i)(x)$ and if $x \in S_i$,  $K_i(x,1) = f(x_0) = y_0 = \iota(a_0)$, where $y_0$ is the basepoint of $Y$ and $a_0$ is the basepoint of $A$. So $K_i$ is a pointed homotopy between $f|_{U_i'}$ and $\iota \circ l_i'$ where $l_i' \colon U_i' \to A$ is defined by:
\[ 
l_i'(x) = \begin{cases} 
l_i(x) & \text{if } x \in U_i\backslash \overline{S_i}   \\
a_0 & \text{if } x \in S_i.
\end{cases}
\]
\end{proof}

\begin{remark}\label{caszero} If $\liftcat^\circ_f (\iota) = 0$, the argument of \thmref{pointedcase} fails because it requires at least two open sets, but we can nevertheless prove that $\liftcat^\ast_f (\iota) \leq 1$ by setting $U_0 = X$ and $U_1 = \N$. ($\N$ can not be $X$ if we assume $\liftcat^\ast_f (\iota) \not= 0$.) \end{remark}

\begin{example}Consider the space $Y$ that is a curved tube, the ends of which being attached to a wedge of circles $X\vee A$. The circles $A$ and $X$ are then subspaces of $Y$. All three spaces are pointed with the attachment point $y_0$ of the wedge. We can describe $Y$ as the square $I \times I$ with the identifications $(x, 0) \sim (x, 1)$ and $(0,0) \sim (1,0)$. 
Let $\langle x,y\rangle$ denote the equivalence class of $(x,y)$ in $Y$.
Then $y_0 = \langle 0, 0 \rangle$, $X = \{\langle 0,y\rangle \in Y\}$  and $A = \{\langle1,y\rangle \in Y\}$.

Let $\iota \colon A \hookrightarrow Y$ be the inclusion. The space $Y$ cannot be deformed in itself into $A$ by any homotopy.  But the two half tubes $T_0 = \{\langle x,y\rangle \in Y \mid x \leq \frac12\}$ and $T_1 = \{\langle x,y\rangle \in Y \mid x \geq \frac12\}$ cover $Y$, and we can find open subsets $U_i$ of $Y$ with $T_i \subset U_i$ ($i = 0$ or $1$), each $U_i$  being deformable in $Y$ into $T_i$, itself deformable in $Y$ into $A$. So $\secat^\circ(\iota) = 1$ and by \thmref{pointedcase}, also $\secat^\ast(\iota) = 1$.

Now let  $f \colon X \hookrightarrow Y$ be the inclusion and  let $l\colon X \to A \colon \langle 0,y\rangle \mapsto \langle 1,y\rangle$ be the obvious homeomorphism between $X$ and $A$. We have a free homotopy $H\colon X \times I \to Y\colon (\langle 0,y\rangle, t) \mapsto  \langle t,y\rangle$ between $f$ and $\iota \circ l$. So $\liftcat^\circ_f(\iota) = 0$.  But $X$ is not deformable into $A$ in $Y$ by any pointed homotopy, so $\liftcat^\ast_f(\iota) \not= 0$. And by \obsref{L1},  $\liftcat^\ast_f (\iota) = 1$.

Let $g = \iota \circ l$. As $l$ is a homeomorphism, the equalities above give $\liftcat^\circ_f(g) = 0$ and $\liftcat^\ast_f(g) = 1$. Also by \propref{separation}, $D^\circ (f,g) = 0$, as $f\simeq g$ with the free homotopy $H$. 
On the other hand, \cororef{minorationD} and \remref{caszero} show that  $D^\ast(f,g) = 1$.
\end{example}

The situation where the lifting category is zero in $\Top$ and not zero in $\Topw$ cannot occur when $A$ is a singleton $\{a_0\}$. In this case (which we will return to in \secref{lscategory}), there is only one map $l\colon X \to A$ and only one map $\iota \colon A \to Y$ in $\Topw$ (because $\iota(a_0)$ must be the basepoint $y_0$ of $Y$), and we have the following result:
\begin{proposition}Let $f\colon X\to Y$ and $\iota \colon A=\{a_0\} \to Y$ be maps in $\Topw$.
 Then $\liftcat^\circ_f (\iota) =  0$ when $f$ and $\iota$ are regarded in $\Top$ (via the forgetful functor) if and only if $\liftcat^\ast_f (\iota) = 0$. 
\end{proposition}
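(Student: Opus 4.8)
The plan is to strip the statement down to a comparison between free and pointed null-homotopies, and then to upgrade a free null-homotopy to a pointed one using only that $X$ is well-pointed. First I would unwind both sides. Since $A=\{a_0\}$ is a singleton, the sole map $l\colon X\to A$ is constant and $\iota(a_0)=y_0$, so $\iota\circ l=c_{y_0}$, the constant map at the basepoint. Reading \remref{L0op} in $\Top$ and then in $\Topw$, the condition $\liftcat^\circ_f(\iota)=0$ says precisely that $f$ is freely homotopic to $c_{y_0}$, while $\liftcat^\ast_f(\iota)=0$ says that $f$ is pointedly homotopic to $c_{y_0}$. One implication is then free: a pointed homotopy is a free one, so $\liftcat^\ast_f(\iota)=0$ forces $\liftcat^\circ_f(\iota)=0$. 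The whole content is the converse, namely that a free null-homotopy of $f$ can always be replaced by a pointed one.

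For the converse I would start from a free homotopy $H\colon X\times I\to Y$ with $H(\tiret,0)=f$ and $H(\tiret,1)=c_{y_0}$, and record its basepoint track $\alpha(s)=H(x_0,s)$, a loop at $y_0$. The obstruction to $H$ being pointed is exactly that $\alpha$ need not be constant; crucially, $\alpha$ need not even be null-homotopic, so one cannot contract it directly. To get around this I would first concatenate $H$ with the reverse of the constant-in-$x$ self-homotopy $K(x,s)=\alpha(s)$ of $c_{y_0}$. The result $M=H\ast\overline K$ is again a free homotopy from $f$ to $c_{y_0}$, but its basepoint track is now $\alpha\ast\overline\alpha$, which \emph{is} null-homotopic rel endpoints via an explicit square $B\colon I\times I\to Y$ with $B(s,0)=(\alpha\ast\overline\alpha)(s)$, $B(s,1)=y_0$ and $B(0,t)=B(1,t)=y_0$.

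Next I would fill a cube. On the subspace
\[
S=\big(X\times I\times\{0\}\big)\cup\big(X\times\{0,1\}\times I\big)\cup\big(\{x_0\}\times I\times I\big)\subset X\times I\times I
\]
define $\Phi$ to be $M$ on the bottom face $t=0$, the constant-in-$t$ homotopies $f$ and $c_{y_0}$ on the two end faces $s=0$ and $s=1$, and $B$ on the basepoint face; these choices agree on every overlap, so $\Phi$ is well defined on $S$. To extend $\Phi$ over the whole cube I would invoke well-pointedness: since $\{x_0\}\hookrightarrow X$ is a closed cofibration and the three-sided inclusion $(I\times\{0\})\cup(\{0,1\}\times I)\hookrightarrow I\times I$ is a strong deformation retract, the product pair $(X\times I\times I,\,S)$ is again a deformation-retract pair, so $S$ is a retract of the cube. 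Composing $\Phi$ with a retraction yields a map $\Phi\colon X\times I\times I\to Y$, and its top face $\Phi(\tiret,\tiret,1)$ is a homotopy from $f$ to $c_{y_0}$ which is pointed because $\Phi(x_0,s,1)=B(s,1)=y_0$. This produces the desired pointed null-homotopy.

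I expect the main obstacle to be precisely the rectification of the basepoint track: the naive attempt to keep the two ends of the homotopy fixed while contracting $\alpha$ fails because $\alpha$ can be homotopically nontrivial, and the device that saves the argument is the concatenation with $K$, which replaces $\alpha$ by the manifestly contractible loop $\alpha\ast\overline\alpha$ without disturbing the maps $f$ and $c_{y_0}$ at the two ends. Once the basepoint face $B$ is available, the extension over the cube is a formal consequence of $X$ being well-pointed, so no topology beyond the closed-cofibration property is needed, and in particular no normality assumption is required here.
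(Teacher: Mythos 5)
Your proposal is correct and follows essentially the same route as the paper: reduce to upgrading a free null-homotopy of $f$ to a pointed one, then extend prescribed boundary data over the cube $X\times I\times I$ using the homotopy extension property of the cofibration coming from well-pointedness of $X$ (the paper phrases this as HEP for $Z=X\times\{0,1\}\cup\{x_0\}\times I\hookrightarrow X\times I$, which is your $S$ in the cube). The only difference is in filling the basepoint face: you first concatenate with $\overline K$ so that the track becomes $\alpha\ast\overline\alpha$ and can be contracted rel endpoints, whereas the paper skips this step by using $G(x_0,s,t)=F(x_0,(1-t)s)$, a contraction of $\alpha$ that is not rel endpoints but is compensated by letting the $s=1$ face slide along $\overline\alpha$ (harmless, since the map there is constant); both devices are valid.
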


\begin{proof}As always, $x_0$ (resp. $y_0$) is the basepoint of $X$ (resp. $Y$).
 Let's assume we have a free homotopy $F \colon X \times I \to Y$ between $f$ and $\iota \circ l$ where $l\colon X \to \{a_0\}$ is the constant map; so $\forall~x \in X$, $F(x,0) = f(x)$ and $F(x,1) = (\iota \circ l)(x) = y_0$.

Let $Z = X \times \{0,1\} \cup \{x_0\} \times I$. We can define a homotopy $G\colon Z \times I \to Y$ by: 
$$\begin{cases}
G(x, 0, t) = f(x) & \text{ for } x \in X, t \in I;\\
G(x, 1, t) = F(x_0, 1-t)& \text{ for } x \in X, t \in I;\\
G(x_0, s, t) = F(x_0, (1-t)s)& \text{ for } s \in I, t \in I.\end{cases}$$  
The map $G$ is well defined because if $x = x_0$ and $s = 0$ (resp. $s = 1$), the third line matches the first (resp. second) line, and it is continuous because $X\times \{0\}$, $X\times \{1\}$ and $\{x_0\} \times I$ are closed subsets of $Z$. 

Note that $G|_{Z\times \{0\}} = F|_Z$. As the inclusion $\{x_0\} \hookrightarrow X$ is a cofibration,  the inclusion $Z \hookrightarrow X \times I$ is also a cofibration.
 By the homotopy extension property, $G$ can be extended to a homotopy $G' \colon (X \times I) \times I \to Y$. 
 
We can define a homotopy $F': X \times I \to Y$ by $F'(x,s) =  G'(x, s, 1)$. As $G'|_{Z \times I}=G$, we have:
$$\begin{cases}
F'(x,0) = G(x, 0, 1) = f(x);\\
F'(x,1) = G(x, 1, 1) = F(x_0, 0) = f(x_0) = y_0 = (\iota \circ l)(x);\\
F'(x_0,s) = G(x_0, s, 1) = F(x_0, 0) = f(x_0).\end{cases}$$ 
So $F'$ is a pointed homotopy between  $f$ and the constant map $\iota \circ l$.
\end{proof}

%%%%%%%%%%%%%%%%%%%%%%%%%%%%%%%%%%%%%%%%%%%%%%%%%%%%%%%%%

\section{Whitehead and Ganea constructions}\label{liftcatwg}
In this section, we give a version of the lifting category based on the constructions of Whitehead and Ganea. We show that axioms S0 to S2 are well satisfied.
We also examine how this version relates to the version with open covers.

\begin{definition}\label{whitehead}
For any map $\iota \colon A \to Y\in\Jc$, the \emph{Whitehead
construction} associated to $\iota$ is the
following sequence of homotopy commutative diagrams ($i> 0$),
starting with $t_0 = \iota \colon A \to Y$:
$$\xymatrix{
&Y^i\times A
\ar[rd]
\ar@/^5pt/[rrrd]^{{\rm id}_{Y^i} \times \iota}&&&\\
T^{i-1}(\iota) \times A
\ar[ru]^{t_{i-1}\times{\rm id}_A}
\ar[rd]_{{\rm id}_{T^{i-1}(\iota)}\times\iota}
&&T^i(\iota)\ar[rr]|-(.35){t_i}&&Y^{i+1}\\
&T^{i-1}(\iota)\times Y\ar[ru]
\ar@/_5pt/[rrru]_{t_{i-1}\times{\rm id}_Y}&&&
}$$
where the outer square is a homotopy pullback (by \lemref{foursquares}),
the inner square is a homotopy pushout (by construction of $T^i(\iota)$),
and the map $t_i\colon T^i(\iota) \to Y^\ipu = Y\times \dots\times Y$ ($\ipu$ times) is the whisker map induced by this homotopy pushout.
The spaces $T^i(\iota)$ are called the {\em fat wedges} of $\iota$.
\end{definition}

The Whitehead construction is well defined up to homotopy equivalences because homotopy pullbacks and homotopy pushouts are well defined up to homotopy equivalences. 

\begin{remark}\label{Tnstandard}
Actually, if $\iota$ is a closed cofibration, the Whitehead construction may be carried out with true pullbacks and true pushouts, and all involved maps are then closed cofibrations. This is mainly due to \remref{hpbixid} and \cite[Theorem~6]{Str68}.  Then, as cofibrations are embeddings, $T^i(\iota)\cong \{(y_0,\dots,y_i) \in Y^\ipu \mid y_j \in A \text{ for at least one } $j$\}$. See also \cite[Corollary~11]{GarVan10}.
\end{remark}

Here is the Whitehead version of the lifting and sectional category:
\begin{definition}For any maps $f\colon X \to Y$ and $\iota\colon A\to Y$ in $\Jc$, we denote $\liftcatWG_f (\iota)$ the least integer $n$ such that there is a map $l \colon X \to T^n(\iota)$ with $t_n \circ l \simeq \Delta_\npu \circ f$, where $\Delta_\npu$ is the diagonal map  $\Delta_\npu\colon Y \to Y^\npu$. If no such $n$ exists, we denote  $\liftcatWG_f (\iota) = \infty$.

If $f = \id_Y$, we denote $\secatWG \, (\iota) = \liftcatWG_{\id_Y} (\iota)$.
\end{definition}

\begin{remark}Since homotopic distance and topological complexity are lifting categories to $\Delta \colon Y \to Y\times Y$, we may want to build the fat wedges $T^i(\Delta)$. It is easier if $\Delta$ is a cofibration, i.e.\ $Y$ is `locally equiconnected'. In this case, $Y$ is Hausdorff, see \cite[Theorem II.3]{DyeEil72}, so $\Delta$ is closed, and we can use \remref{Tnstandard}.\end{remark}

\begin{remark}\label{L0WG}
Note that $\Delta_1 = \id_Y$. So, as $t_0 = \iota$, $\secatWG (\iota) = 0$ if and only if $\iota$ has a homotopy section $s$. More generally, $\liftcatWG_f (\iota) = 0$ if and only if there is a map $l\colon X\to A$ such that $\iota\circ l\simeq f$. 
\end{remark}

\begin{remark}\label{L1WG} Clearly, for any maps $f\colon X\to Y$ and $\iota\colon A\to Y$,\\[6pt]
\centerline{$\liftcatWG_f (\iota) \leq \secatWG(\iota)$.}
\end{remark}

There is an equivalent alternative approach of the lifting category with the following construction:
\begin{definition}\label{ganea}
For any map $\iota\colon A \to Y$ in $\Jc$,
the \emph{Ganea construction} associated to $\iota$
is the following sequence of homotopy commutative diagrams ($i \geq 0$),
starting with  $g_0 = \iota \colon A \to X$:
$$\xymatrix{
&A\ar[dr]\ar@/^5pt/[rrrd]^\iota\\
F_i(\iota)\ar[rd]\ar[ur]&&G_\ipu(\iota)\ar[rr]|-(.35){g_\ipu}&&Y\\
&G_i(\iota)\ar[ru]\ar@/_5pt/[rrru]_{g_i}}$$
where the outer square is a homotopy pullback (by construction of $F_i(\iota)$),
the inner square is a homotopy pushout (by construction of $G_\ipu(\iota)$),
and the map $g_\ipu \colon G_\ipu(\iota) \to Y$
is the whisker map induced by this homotopy pushout.
\end{definition}

The Ganea construction is well defined up to homotopy equivalences.

\medskip
Here is the Ganea characterisation of the lifting category:
\begin{proposition}\label{liftcatganea}
For any maps $f\colon X \to Y$ and $\iota\colon A\to Y$ in $\Jc$,
$\liftcatWG_f(\iota)$ is the least integer $n$ such that there is a map $r\colon X \to G_n(\iota)$ with $g_n \circ r \simeq f$.
\end{proposition}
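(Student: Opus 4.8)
The plan is to reduce the whole statement to \lemref{liftlemma} by exhibiting, for every $n\geq 0$, a homotopy pullback
$$\xymatrix{G_n(\iota)\ar[r]\ar[d]_{g_n}&T^n(\iota)\ar[d]^{t_n}\\ Y\ar[r]_{\Delta_\npu}&Y^\npu}\qquad(\star_n)$$
identifying the Ganea map $g_n$ as the pullback of the Whitehead map $t_n$ along the diagonal $\Delta_\npu\colon Y\to Y^\npu$. Granting $(\star_n)$, I would apply \lemref{liftlemma} to this square with $W=X$ and $g=f$: there is a map $r\colon X\to G_n(\iota)$ with $g_n\circ r\simeq f$ if and only if there is a map $l\colon X\to T^n(\iota)$ with $t_n\circ l\simeq\Delta_\npu\circ f$. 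The right-hand condition is exactly the defining condition for $\liftcatWG_f(\iota)\leq n$; since the equivalence holds for every $n$, the least $n$ realizing either side is the same, which is precisely the assertion.

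It remains to establish $(\star_n)$, which I would prove by induction on $n$. The base case $n=0$ is immediate: $G_0(\iota)=T^0(\iota)=A$, $g_0=t_0=\iota$ and $\Delta_1=\id_Y$, so $(\star_0)$ is a homotopy pullback because its top and bottom sides are identities. For the inductive step I would pull the Whitehead homotopy pushout defining $T^\npu(\iota)$ back along $\Delta_{n+2}\colon Y\to Y^{n+2}$, observing that all four corners of that pushout map compatibly to $Y^{n+2}$ through $t_\npu$ and its restrictions. The key external ingredient here is the Eckmann--Hilton dual of the Prism lemma, namely Mather's cube theorem from \cite{Mat76}: the homotopy pullback of a homotopy pushout, formed cornerwise along a single map into the common codomain, is again a homotopy pushout.

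The substance of the induction is then to identify the pulled-back span with the Ganea span. Using the factorization $\Delta_{n+2}=(\Delta_\npu\times\id_Y)\circ\Delta$ and pasting via \propref{prism}, the pullback of the arm $t_n\times\id_Y$ along $\Delta_{n+2}$ is $G_n(\iota)$ (carrying $g_n$), by \propref{hpbkappaxlambda} applied to the inductive hypothesis $(\star_n)$; the pullback of the arm $\id_{Y^\npu}\times\iota$ is $A$ (carrying $\iota$), by \remref{hpbixid} and \lemref{foursquares}; and the pullback of the apex map $t_n\times\iota$ is, again by \propref{prism}, the homotopy pullback of $g_n$ along $\iota$, which is exactly $F_n(\iota)$. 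Thus Mather's cube theorem converts the pulled-back pushout into the Ganea pushout, identifying the pullback of $T^\npu(\iota)$ along $\Delta_{n+2}$ with $G_\npu(\iota)$ and its projection with $g_\npu$; this is $(\star_{n+1})$, closing the induction.

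I expect the inductive step to be the main obstacle: one must match each of the three corner pullbacks and, crucially, their connecting maps to the Ganea span $A\leftarrow F_n(\iota)\rightarrow G_n(\iota)$ so that the cube theorem applies with the correct orientation, and then verify that the map induced on the resulting pushout really is the Ganea projection $g_\npu$ rather than merely some map $G_\npu(\iota)\to Y$. The rest (the reduction and the final appeal to \lemref{liftlemma}) is formal once $(\star_n)$ is in hand.
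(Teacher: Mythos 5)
Your reduction is exactly the paper's: exhibit the homotopy pullback $(\star_n)$ of $t_n$ along $\Delta_{n+1}$ identifying $g_n$, then conclude by \lemref{liftlemma}. The difference is that the paper simply cites \cite[Theorem 25]{DoeHa11} for $(\star_n)$, whereas you supply an inductive proof of it. Your induction is the standard (and correct) argument: the base case is trivial, and the inductive step pulls the Whitehead pushout defining $T^{n+1}(\iota)$ back cornerwise along $\Delta_{n+2}$, identifies the three corners as $A$, $G_n(\iota)$ and $F_n(\iota)$ via \remref{hpbixid}, \propref{hpbkappaxlambda}, \lemref{foursquares} and the Prism lemma, and invokes the cube theorem to recognize the result as the Ganea pushout; the induced map is $g_{n+1}$ because a whisker map out of a homotopy pushout is determined up to homotopy by its restrictions to the two legs, and these agree by construction. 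One caveat: the cube theorem is \emph{not} the Eckmann--Hilton dual of the Prism lemma (that dual is the pushout-pasting lemma, which you do not need here). It is a genuinely additional property --- true in $\Top$ by Mather's work and imposed as an axiom in the J-category framework of \cite{Doe93} --- and its own dual fails in $\Top$. So your argument rests on one external input (the cube theorem) just as the paper's rests on one (the cited theorem, whose proof is essentially your induction); what your version buys is self-containedness at the cost of making that dependence on the cube axiom explicit.
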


\begin{proof}
For any $i\geq 0$ there is a homotopy pullback:
$$\xymatrix@C=3pc{
G_i(\iota)\ar[d]_{g_i}\ar[r]&T^i(\iota)\ar[d]^{t_i}\\
Y\ar[r]_{\Delta_\ipu}&Y^\ipu\text{,}
}$$
see \cite[Theorem 25]{DoeHa11}.
The result follows from \lemref{liftlemma}.
\end{proof}

We recall the following result:

\begin{lemma}\label{lemmeganea}  \cite[Lemma 27]{DoeHa11}
Assume we have any homotopy commutative diagram in $\Jc$:
$$\xymatrix@C=3pc{
 B\ar[d]_\kappa\ar[r]_\zeta&A\ar[d]^\iota\\
 X\ar[r]_f&Y.
}$$
Then for any $i\geq 0$, we have a homotopy commutative square:
$$\xymatrix@C=3pc{
G_i(\kappa)\ar[d]_{g_i(\kappa)}\ar[r]&G_i(\iota)\ar[d]^{g_i(\iota)}\\
X\ar[r]_f&Y.
}$$
which is a homotopy pullback if the first square is a homotopy pullback.
\end{lemma}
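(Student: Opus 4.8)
The plan is to prove Lemma~\ref{lemmeganea} by induction on $i$, building the square for $G_i$ out of the square for $G_{i-1}$ using the two-step structure of the Ganea construction (a homotopy pullback followed by a homotopy pushout) and the universal properties those steps provide. The base case $i=0$ is immediate: by definition $g_0(\kappa)=\kappa$ and $g_0(\iota)=\iota$, so the required square is exactly the hypothesis square, which is a homotopy pullback precisely when the hypothesis square is.

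For the inductive step, suppose we have a homotopy commutative square relating $g_i(\kappa)$ and $g_i(\iota)$ over $f$. I would first produce a map $F_i(\kappa)\to F_i(\iota)$ compatible with everything in sight. Recall $F_i(\iota)$ is the homotopy pullback of $\iota\colon A\to Y$ along $g_i(\iota)\colon G_i(\iota)\to Y$, and likewise for $\kappa$. Using the map $\zeta\colon B\to A$, the inductive map $G_i(\kappa)\to G_i(\iota)$, and the commutativities they satisfy, the universal property of the homotopy pullback defining $F_i(\iota)$ yields a whisker map $F_i(\kappa)\to F_i(\iota)$ making the relevant cube homotopy commute. Next, the target spaces $G_{i+1}(\kappa)$ and $G_{i+1}(\iota)$ are homotopy pushouts: $G_{i+1}(\iota)$ is the pushout of $A\leftarrow F_i(\iota)\to G_i(\iota)$, and similarly for $\kappa$. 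The maps $B\to A$, $F_i(\kappa)\to F_i(\iota)$ and $G_i(\kappa)\to G_i(\iota)$ assemble into a morphism of pushout diagrams, and so the universal property of the homotopy pushout produces a canonical map $G_{i+1}(\kappa)\to G_{i+1}(\iota)$. Finally, I would check that this map commutes with the two Ganea maps $g_{i+1}$ into $Y$ over $f$, again by invoking the pushout universal property and the defining relations $g_{i+1}\circ(\text{inclusions})\simeq$ the appropriate maps.

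For the second assertion (that the new square is a homotopy pullback when the hypothesis is), the natural tool is the Prism lemma (\propref{prism}) together with the known homotopy pullback from \propref{liftcatganea}, namely the square relating $g_i(\iota)$ to $t_i$ over $\Delta_{i+1}$. The strategy is to compare the Ganea squares with the corresponding Whitehead squares: one shows by the same kind of inductive argument that the Whitehead fat-wedge map $T^i(\kappa)\to T^i(\iota)$ over $\Delta_{i+1}$ fits into a homotopy pullback when the hypothesis holds, and then transports this back to the Ganea picture by pasting with the homotopy pullback squares from \propref{liftcatganea} and applying the Prism lemma to cancel the common $\Delta_{i+1}\colon Y\to Y^{i+1}$ side.

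\emph{The main obstacle} will be the homotopy-pullback claim rather than the mere construction of the comparison map. Keeping track of which faces of the cube are homotopy pullbacks and assembling the hypotheses of the Prism lemma correctly is delicate, because the Ganea construction interleaves a pullback and a pushout at each stage, and homotopy pushouts do not interact with homotopy pullbacks as cleanly as one would like. The cleanest route is probably to reduce entirely to the Whitehead side, where every square in sight is genuinely a homotopy pullback (by \lemref{foursquares} and \defref{whitehead}), prove the pullback property there by a transparent induction, and only at the end translate back via the Ganea–Whitehead comparison of \propref{liftcatganea}. Since this lemma is quoted from \cite{DoeHa11}, I would in practice cite that reference for the full verification and merely indicate this inductive skeleton.
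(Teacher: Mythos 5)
The paper itself offers no proof of this lemma: it is recalled verbatim from \cite[Lemma 27]{DoeHa11}, so there is nothing internal to compare against. Judged on its own terms, your inductive construction of the comparison square is sound: the base case is exactly the hypothesis square, the whisker map $F_i(\kappa)\to F_i(\iota)$ exists by the universal property of the homotopy pullback defining $F_i(\iota)$ (using $\iota\circ\zeta\simeq f\circ\kappa$ and the outer square of \defref{ganea} for $\kappa$), and the induced map on homotopy pushouts together with the compatibility of the whisker maps $g_{i+1}$ is the standard argument (modulo the usual coherence bookkeeping for homotopy commutative cubes, which is acceptable at this level of detail).

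The gap is in the homotopy pullback assertion. Your plan rests on \propref{prism}, i.e.\ on pasting and cancelling homotopy pullbacks, but the step that actually matters is of a different nature: you must show that the comparison square at a \emph{homotopy pushout} corner is a homotopy pullback, and no amount of pasting of pullback squares yields that. The required ingredient is Mather's second cube theorem (the ``cube axiom'' of J-categories): in a commutative cube whose bottom face is a homotopy pushout and whose four vertical faces are homotopy pullbacks, the top face is a homotopy pushout; applied to the cube over the pushout diagram $A\leftarrow F_i(\iota)\to G_i(\iota)$ pulled back along $f$, it identifies $G_{i+1}(\kappa)$ with the homotopy pullback of $g_{i+1}(\iota)$ along $f$. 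This theorem holds in $\Top$ but is genuinely stronger than \propref{prism}, and your proposal never invokes it. Moreover, the proposed escape route to the Whitehead side does not avoid the problem: contrary to your claim that ``every square in sight is genuinely a homotopy pullback'' there, $T^i(\iota)$ is itself defined in \defref{whitehead} as a homotopy pushout, so proving that $T^i(\kappa)\to T^i(\iota)$ over $f^{i+1}$ is a homotopy pullback runs into exactly the same pushout-versus-pullback interaction, again requiring the cube theorem (or, in the cofibration case, an explicit point-set argument with unions of subspaces). Deferring to \cite{DoeHa11} is legitimate, but the sketch as written misidentifies where the real difficulty lies and which tool resolves it.
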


Let's check that the definition of $\liftcatWG$ is consistent with \defref{liftcatisasecat}:

\begin{corollary}\label{liftcatWGisasecat}
If we have a homotopy pullback in $\Jc$:
$$\xymatrix@C=3pc{
P\ar[d]_q\ar[r]_p&A\ar[d]^\iota\\
X\ar[r]^f&Y
}$$
then $\liftcatWG_f(\iota) = \secatWG(q)$.
\end{corollary}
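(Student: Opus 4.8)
The plan is to mirror the argument of \lemref{liftcatopisasecat}, but with the Ganea machinery in place of open covers. The two tools I will lean on are the Ganea characterisation of the lifting category (\propref{liftcatganea}) and the transfer result for Ganea fibrations (\lemref{lemmeganea}). The underlying idea is that both $\liftcatWG_f(\iota)$ and $\secatWG(q)$ are detected by the existence of a section of a suitable Ganea map, and that a homotopy pullback puts these two sections in correspondence through \lemref{liftlemma}.

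First I would apply \lemref{lemmeganea} to the given square, reading it with $B = P$, $\zeta = p$ and $\kappa = q$. Since that square is a homotopy pullback by hypothesis, the lemma produces, for each $n \geq 0$, a homotopy pullback
$$\xymatrix@C=3pc{
G_n(q)\ar[d]_{g_n(q)}\ar[r]&G_n(\iota)\ar[d]^{g_n(\iota)}\\
X\ar[r]_f&Y.
}$$

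Next I would feed this homotopy pullback into \lemref{liftlemma} with $W = X$ and $g = \id_X$. As the square is a homotopy pullback, the lemma yields an equivalence: there exists $r \colon X \to G_n(q)$ with $g_n(q) \circ r \simeq \id_X$ if and only if there exists $r' \colon X \to G_n(\iota)$ with $g_n(\iota) \circ r' \simeq f \circ \id_X = f$. By \propref{liftcatganea}, the left-hand condition says precisely that $\secatWG(q) = \liftcatWG_{\id_X}(q) \leq n$, while the right-hand condition says that $\liftcatWG_f(\iota) \leq n$. Since these two inequalities hold for exactly the same integers $n$, their least elements agree, giving $\liftcatWG_f(\iota) = \secatWG(q)$.

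I do not expect a genuine obstacle: the statement is a formal consequence of machinery already assembled. The one point requiring care is to phrase everything through \propref{liftcatganea} (sections of $g_n$) rather than through the original Whitehead definition (lifts against $t_n$); with the fat-wedge maps one would instead have to compare $\Delta_\npu \circ f$ with $\Delta_\npu$, which does not slot into \lemref{liftlemma} nearly as cleanly.
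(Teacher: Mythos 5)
Your proposal is correct and follows essentially the same route as the paper: apply \lemref{lemmeganea} to the given homotopy pullback to obtain, for each $n$, a homotopy pullback relating $g_n(q)$ and $g_n(\iota)$ over $f$, then use \lemref{liftlemma} with $g=\id_X$ to match sections of $g_n(q)$ with lifts of $f$ through $g_n(\iota)$, and conclude via \propref{liftcatganea}. Your closing remark about why the Ganea formulation is preferable to the fat-wedge one here is a sensible observation, though not needed for the argument.
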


\begin{proof}By \lemref{lemmeganea}, for any $i \geq 0$, we have a homotopy pullback:
$$\xymatrix@C=3pc{
G_i(q)\ar[d]_{g_i(q)}\ar[r]&G_i(\iota)\ar[d]^{g_i(\iota)}\\
X\ar[r]_f&Y.
}$$
By \lemref{liftlemma}, there is a map $r\colon X \to G_i(\iota)$ such that $g_i(\iota) \circ r \simeq f$ if and only if there is a map $s\colon X \to G_i(q)$ such that $g_i(q) \circ s \simeq \id_X$. The result follows from \propref{liftcatganea}.
\end{proof}

We are ready to prove:
\begin{proposition}
The integer $\secatWG$  satisfies Axioms S0 to S2.
\end{proposition}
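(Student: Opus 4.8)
The plan is to verify each of the three axioms S0, S1, S2 directly from the definition of $\secatWG$, leaning heavily on the Ganea characterisation (\propref{liftcatganea}) and on \cororef{liftcatWGisasecat}, which we just proved. The key observation is that most of the work has already been done: since $\secatWG(\iota) = \liftcatWG_{\id_Y}(\iota)$ and \cororef{liftcatWGisasecat} tells us that $\liftcatWG$ agrees with $\secatWG$ of the pulled-back map exactly as in \defref{liftcatisasecat}, the axioms should reduce to elementary facts about the Ganea construction.

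First I would dispose of \textbf{S0}. This is immediate from \remref{L0WG}: by definition $\secatWG(\iota) = 0$ means there is a map $l\colon Y \to A$ with $\iota \circ l \simeq \id_Y$ (since $g_0 = \iota$ and $G_0(\iota) = A$), which is exactly the existence of a homotopy section. Next, for \textbf{S2}, I would use \cororef{liftcatWGisasecat} together with \remref{L1WG}. Given a homotopy pullback as in S2, \cororef{liftcatWGisasecat} yields $\liftcatWG_f(\iota) = \secatWG(q)$, and \remref{L1WG} (the Ganea analogue of \obsref{L1}) gives $\liftcatWG_f(\iota) \leq \secatWG(\iota)$; combining these, $\secatWG(q) \leq \secatWG(\iota)$, which is S2.

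The remaining axiom, \textbf{S1}, is the one I expect to require genuine work. Given the homotopy commutative triangle $\iota \circ \zeta \simeq \kappa$, I would apply \lemref{lemmeganea} with $f = \id_X$ (so the bottom map is the identity and $Y$ is replaced by $X$). That lemma furnishes, for each $i \geq 0$, a homotopy commutative square relating the Ganea constructions of $\kappa$ and $\iota$ over $X$. Explicitly, if there is a map $r\colon X \to G_n(\kappa)$ with $g_n(\kappa) \circ r \simeq \id_X$, then composing $r$ with the induced map $G_n(\kappa) \to G_n(\iota)$ from \lemref{lemmeganea} produces a map $X \to G_n(\iota)$ whose composite with $g_n(\iota)$ is homotopic to $\id_X$; by \propref{liftcatganea} this shows $\secatWG(\iota) \leq \secatWG(\kappa)$.

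The main obstacle is ensuring that \lemref{lemmeganea} applies in the form needed: it requires a homotopy commutative square with horizontal map $f$ on the bottom, whereas S1 gives only a triangle $\kappa \simeq \iota \circ \zeta$. The fix is to read the triangle as the square with $f = \id_X$ on the bottom, $\zeta$ on top, and $\kappa, \iota$ on the sides; this is a legitimate homotopy commutative square (it need not be a homotopy pullback, but S1 does not require the conclusion about pullbacks), so \lemref{lemmeganea} delivers the compatible map $G_n(\kappa) \to G_n(\iota)$ over $X$, and the section-lifting argument via \propref{liftcatganea} then goes through. Thus all three axioms hold.
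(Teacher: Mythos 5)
Your proposal is correct and follows essentially the same route as the paper: S0 via \remref{L0WG}, S1 via \lemref{lemmeganea} with $f=\id_X$ combined with \propref{liftcatganea}, and S2 via \cororef{liftcatWGisasecat} together with \remref{L1WG}. Your explicit justification for reading the S1 triangle as a square over $\id_X$ merely spells out what the paper leaves implicit.
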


\begin{proof}
{\bf S0.} This is \remref{L0WG}.

{\bf S1.} \lemref{lemmeganea} with $f = \id_X$ and \propref{liftcatganea} give the result.

{\bf S2.} Consider a homotopy pullback as in  the statement of Axiom~S2. Using \cororef{liftcatWGisasecat} and \remref{L1WG}, we have $\secatWG(q) = \liftcatWG_f(\iota) \leq \secatWG(\iota)$.
\end{proof}

\smallskip
We will now examine the link between this lifting category defined with the Whitehead or Ganea construction, and the lifting category defined before with open covers.
This is an extension of the classical study of the `LS category', see \cite{CLOT03}. 
Also compare with \cite{Gar19} for the similar approach of the `relative category'.

We want to avoid restrictions on maps. The next lemma will allow us to state the following theorem for {\em any} pair of maps $f$ and $\iota$ of same codomain, on the sole condition that the domain $X$ of $f$ is normal.

\smallskip
For a continuous map $\iota\colon A \to Y$ consider the `mapping cylinder' $Z_\iota$:
$$\xymatrix{
   A\hspace{2mm} \ar[rr]_\iota\ar@{c->}[rd]_\mu && Y\ar@/^0.8pc/[ld]^j \\
   & Z_\iota \ar[ru]^r
}$$
In this diagram, $\iota = r \circ \mu$, the map $\mu$ is a closed cofibration, $r\circ j = \id_Y$ and $j\circ r \simeq \id_{Z_\iota}$ rel $Y$. The map $\mu$ is a `cofibration replacement' of $\iota$.
(The mapping cylinder is `reduced' if we work in $\Topw$.)

As a particular case of \propref{L2}, we have:\negskip
\begin{lemma}\label{jof}
With the above data, $\liftcatop_f\, (\iota) = \liftcatop_{j\circ f}\, (\mu) $ and $\liftcatWG_f\, (\iota) = \liftcatWG_{j\circ f}\, (\mu) $
\end{lemma}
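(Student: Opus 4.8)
The plan is to recognize the two claimed identities as instances of \propref{L2}. The key point is that \propref{L2} is a formal consequence of Axioms S0--S2 alone, and both $\liftcatop$ (\secref{liftcatop}) and $\liftcatWG$ (\secref{liftcatwg}) have been shown to satisfy those axioms; moreover \lemref{liftcatopisasecat} and \cororef{liftcatWGisasecat} identify each concrete lifting category with the $\secat$ of the map $q$ arising from the relevant homotopy pullback, so each is genuinely the abstract lifting category of \defref{liftcatisasecat} built from $\secatop$, resp.\ $\secatWG$. Hence every statement of \secref{liftingcategory}, \propref{L2} included, may be applied to either of them, and it suffices to exhibit one suitable diagram.

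First I would write down the (strictly) commutative square
$$\xymatrix@C=3pc{
A\ar[d]_\mu\ar[r]^{\id_A}&A\ar[d]^\iota\\
Z_\iota\ar[r]_r&Y
}$$
which commutes because $\iota = r\circ\mu$. Reading this as the right-hand square of \propref{L2} with $\kappa = \mu$, $\zeta = \id_A$, and bottom map $r$, I would set $g = j\circ f\colon X \to Z_\iota$. The composite along the bottom is $r\circ(j\circ f) = (r\circ j)\circ f = f$, using $r\circ j = \id_Y$; thus the role of the map $h$ is played by $f$ itself. \propref{L2} then delivers, in one stroke, $\liftcat_f(\iota) \leq \liftcat_{j\circ f}(\mu)$.

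To upgrade this inequality to the asserted equality I would invoke the ``in particular'' clause of \propref{L2}: here $\zeta = \id_A$ is trivially a homotopy equivalence, and $r$ is a homotopy equivalence with homotopy inverse $j$ (by $r\circ j = \id_Y$ and $j\circ r\simeq \id_{Z_\iota}$ rel $Y$), so the two opposite sides $\id_A$ and $r$ of the square are homotopy equivalences, the square is a homotopy pullback, and equality holds. Instantiating $\liftcat$ as $\liftcatop$ yields the first identity and as $\liftcatWG$ the second. The only step demanding care---and the nearest thing to an obstacle---is the bookkeeping of the first paragraph: verifying that one is entitled to feed the concrete $\liftcatop$ and $\liftcatWG$ into a proposition proved purely from the axioms. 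Once that licensing is acknowledged, no computation remains.
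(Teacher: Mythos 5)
Your proof is correct and is essentially the paper's own argument: the paper simply states the lemma ``as a particular case of \propref{L2}'', applied to the square with sides $\id_A$, $\mu$, $r$, $\iota$, exactly as you set it up. Your extra care in checking that $\liftcatop$ and $\liftcatWG$ are legitimate instances of the abstract $\liftcat$ (via \lemref{liftcatopisasecat} and \cororef{liftcatWGisasecat}) is sound and, if anything, more explicit than the paper.
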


\smallskip
Let $i \colon A \to Z$ be a cofibration.  Then $i$ is an embedding, see \cite[Theorem 1]{Str66}, so we will regard $A$ as a subspace of $Z$. That being said, there exists an open neighborhood $\N$ of $A$ in $Z$ which is deformable in $Z$ into $A$ rel $A$, that is, there exists a homotopy $G\colon \N \times I \to Z$ such that $G(x,0) = x$, $G(a,t) = a$ and $G(x,1) \in A$ for all $x \in \N, a \in A, t \in I$, see \cite[Lemma 4]{Str68}.

\medskip
We are ready to prove that $\liftcatop_f(\iota) = \liftcatWG_f(\iota)$ if the domain of $f$ is normal:

\begin{theorem}\label{liftcateqliftcatop}
For any maps $\iota\colon A \to Y$ and $f\colon X \to Y$, $\liftcatop_f (\iota) \leq \liftcatWG_f (\iota)$ and if $X$ is normal, $\liftcatop_f (\iota) = \liftcatWG_f (\iota)$.

In particular, with $f = \id_X$, for any map $\iota\colon A \to X$, $\secatop  (\iota) \leq \secatWG (\iota)$ and if $X$ is normal,  $\secatop (\iota) = \secatWG (\iota)$.

\end{theorem}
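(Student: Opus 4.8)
The statement has two parts: the inequality $\liftcatop_f(\iota) \leq \liftcatWG_f(\iota)$ in full generality, and the reverse inequality under the normality hypothesis on $X$. The strategy is to reduce to the case where $\iota$ is a cofibration, then exploit the explicit description of the fat wedges (and their Ganea counterparts) as genuine subspaces, to translate a Whitehead/Ganea lift into an open cover and vice versa.

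\textbf{First, the reduction.}
By \lemref{jof}, both $\liftcatop$ and $\liftcatWG$ are unchanged if we replace $\iota\colon A \to Y$ by its cofibration replacement $\mu\colon A \to Z_\iota$ (and $f$ by $j\circ f$). So I would assume from the outset that $\iota$ is a closed cofibration, hence an embedding, and regard $A$ as a subspace of $Y$. By \remref{Tnstandard}, the fat wedge $T^n(\iota)$ is then the honest subspace of $Y^\npu$ consisting of tuples with at least one coordinate in $A$, and by \propref{liftcatganea} a lift to the Whitehead construction is equivalent to a lift $r\colon X \to G_n(\iota)$ with $g_n \circ r \simeq f$.

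\textbf{The inequality $\liftcatop \leq \liftcatWG$.}
Suppose $\liftcatWG_f(\iota) = n$, so there is $l\colon X \to T^n(\iota)$ with $t_n \circ l \simeq \Delta_\npu \circ f$. Writing the $n+1$ coordinate functions of $t_n \circ l$ as maps into $Y$, the fat-wedge condition says that over each point of $X$ at least one coordinate lands in $A$; setting $U_i = \{x : (t_n\circ l)(x)_i \in A\}$ gives an open cover (openness because $A$ is open in the cofibration model, or more carefully because $A = \iota^{-1}$ is handled via the neighborhood deformation), and on each $U_i$ the $i$-th coordinate provides the required map $l_i\colon U_i \to A$ with $\iota \circ l_i \simeq f|_{U_i}$. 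This produces a categorical cover of size $n+1$, so $\liftcatop_f(\iota) \leq n$.

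\textbf{The reverse inequality under normality.}
This is where the normality of $X$ does the real work, and I expect it to be the main obstacle. Starting from a categorical open cover $(U_i)_{0\leq i\leq n}$ with maps $l_i\colon U_i \to A$ and homotopies $\iota\circ l_i \simeq f|_{U_i}$, I would use normality to shrink the cover to a closed cover (or to produce a subordinate partition-of-unity-style gluing) and then assemble the local lifts $l_i$ into a single global map $X \to G_n(\iota)$ — the Ganea space being precisely the homotopy colimit that records ``at most $n+1$ pieces glued along overlaps.'' The invocation of the neighborhood deformation of $A$ in $Y$ rel $A$ (stated just before the theorem, via \cite[Lemma 4]{Str68}) is what lets me push the overlaps into $A$ coherently and verify $g_n \circ r \simeq f$ via \propref{liftcatganea}. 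The delicate point is the coherence of the gluing: the local homotopies must be patched on the overlaps $U_i \cap U_j$ using normality (shrinking to get nested closed/open sets) so that they assemble into a well-defined map to the iterated homotopy pushout $G_n(\iota)$, rather than merely a cover. The final ``in particular'' for $\secat$ is immediate by specializing to $f = \id_X$, since $\Delta_1 = \id$.
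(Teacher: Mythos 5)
Your opening reduction---replace $\iota$ by its cofibration replacement $\mu$ via \lemref{jof} and use \remref{Tnstandard} to see $T^n(\mu)$ as the honest subspace $\bigcup_{i=0}^{n}p_i^\inv(A)$ of $Z_\iota^\npu$---is exactly the paper's first step. But your forward direction is wrong as stated: since $\mu$ is a \emph{closed} cofibration, $A$ is a closed subspace of $Z_\iota$, so your sets $U_i=\{x:(t_n\circ l)(x)_i\in A\}$ are closed, not open. Your parenthetical hedge points at the repair without carrying it out: one must set $h_i=p_i\circ t_n\circ\Phi$ and $U_i=h_i^\inv(\N)$, where $\N$ is the open neighbourhood of $A$ deformable into $A$ rel $A$; these are open and still cover $X$ because each point of $T^n(\mu)$ has some coordinate in $A\subseteq\N$. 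The homotopy $f'|_{U_i}\simeq\mu\circ l_i$ is then the concatenation of $p_i\circ H$ (where $H$ is the given homotopy $\Delta_\npu\circ f'\simeq t_n\circ\Phi$) with the deformation $G(h_i(\cdot),\cdot)$; it is not simply ``the $i$-th coordinate.''

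The reverse inequality is where you have a genuine gap. Your plan is to assemble the local lifts into a map to the Ganea space, described as ``the homotopy colimit that records $n+1$ pieces glued along overlaps''---but $G_n(\iota)$ is an iterated fibrewise join, not the homotopy colimit of the nerve of a categorical cover, and you supply neither the glued map nor the coherence data you yourself flag as the delicate point. That coherence problem is precisely the hard part, and the paper sidesteps it entirely by aiming at the Whitehead fat wedge instead, where \emph{no} compatibility between the $l_i$ on overlaps is needed. Concretely: normality gives shrinkings $W_i\subset\overline{W_i}\subset V_i\subset\overline{V_i}\subset U_i$ and Urysohn functions $\varphi_i$ with $\varphi_i(\overline{W_i})=1$, $\varphi_i(\complement V_i)=0$; each local homotopy $K_i$ is damped to a globally defined $H_i(x,t)=K_i(x,t\varphi_i(x))$ on $U_i$ and $H_i(x,t)=f'(x)$ off $\overline{V_i}$. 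The product $H=(H_0,\dots,H_n)\colon X\times I\to Z_\iota^\npu$ starts at $\Delta_\npu\circ f'$ and ends in $T^n(\mu)$, because every $x$ lies in some $W_i$, where $\varphi_i(x)=1$ forces $H_i(x,1)=K_i(x,1)\in A$. The fat wedge only asks that \emph{one} coordinate per point reach $A$, which is exactly what a single Urysohn damping delivers; you should restructure your second half around this device rather than a gluing into $G_n(\iota)$.
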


%We point out that there are no conditions on spaces and maps, except normality, making the theorem as broad as possible.

\begin{proof}
We keep the same data as in \lemref{jof}.
By this lemma, we can work with $\mu$ and $f' = j\circ f$   rather than with $\iota$ and $f$.

By definition, $\liftcat_{f'}\, (\mu) \leq n$ if and only if there exists a map $\Phi\colon X\to T^n(\mu)$ making the following diagram homotopy commutative:
$$\xymatrix{
   &&\raise 1ex \hbox{$T^n(\mu)$} \ar@{c->}[d]^{t_n} \\
  X \ar@/^0.8pc/[urr]^\Phi \ar[r]_{f'} & Z_\iota \ar[r]_{\Delta_\npu} & Z_\iota^\npu.
}$$

From \remref{Tnstandard}, we regard the cofibrations $\mu$ and $t_n$ as inclusions and consider that $T^n(\mu) = \bigcup_{i=0}^{n} p_i^\inv (A) \subseteq Z_\iota^\npu$ where $p_i$ is the $i$-th projection $p_i \colon Z_\iota^\npu \to Z_\iota$. 

\medskip
Assume we have the map $\Phi$ and the homotopy $H\colon X \times I \to Z_\iota^\npu$ between $\Delta_\npu \circ f'$ and $t_n\circ\Phi$. 

Consider the open neighborhood $\N$ of $A$ in $Z_\iota$ which is deformable in $Z_\iota$ into $A$ with some homotopy $G$ rel $A$. For $0 \leq i \leq n$, let $h_i = p_i \circ t_n \circ \Phi$ and $U_i = h_i^\inv(\N)$.
As $(t_n \circ \Phi) (X) \subseteq T^n(\mu) =\bigcup_{i=0}^{n} p_i^\inv (A)$, we have $X = \bigcup_{i=0}^{n} U_i$.

Now we can define homotopies $L_i \colon U_i \times I\to Z_\iota$ by:
\[ 
L_i(u,t) = \begin{cases} 
p_i(H(u,2t)) & \text{if } 0 \leq t \leq \frac12 \\
G(h_i(u), 2t-1) & \text{if } \frac12 \leq t \leq 1.
\end{cases}
\]
Each $L_i$ is well defined because $p_i(H(u,1))
= p_i ((t_n \circ \Phi)(u))
= h_i(u) = G(h_i(u),0)$.
We have $L_i (u,0) = f'(u)$ and $L_i(u,1) \in A$, so $L_i$ is a homotopy between $f'|_{U_i}$ and $\mu\circ l_i$ where $l_i = L_i(\tiret,1)$.

\medskip
Conversely assume we have an open cover $(U_i)_{0\leq i \leq n}$ of $X$ and homotopies $K_i\colon U_i\times I \to Z_\iota$ with $K_i(u,0) = f'(u)$ and $K_i(u,1) \in A$. 
%Let $F = \varnothing$ in $\Top$ or $F = \{x_0\}$ in $\Topw$ (where $x_0$ is the basepoint of $X$); $F$ is in any case a closed subset of $X$ and $F \subset U_i$ for all $i$.
If $X$ is normal, we have two more open covers $(V_i)_{0\leq i \leq n}$ and $(W_i)_{0\leq i \leq n}$ of $X$ such that
$$\varnothing \subset W_i \subset \overline{W_i} \subset V_i \subset \overline{V_i} \subset U_i.$$
Moreover, as $\overline{W_i}$ and $\complement V_i= X\backslash V_i$ are disjoint closed subspaces of the normal space $X$, we have an Urysohn map $\varphi_i\colon X \to I$ with $\varphi_i(\overline{W_i}) = 1$ and  $\varphi_i(\complement V_i) = 0$.

Now we define $H_i\colon X\times I \to Z_\iota$ as follows:
\[ 
H_i(x,t) = \begin{cases} 
f'(x) & \text{if } x \in \complement \overline{V_i} \\
K_i(x,t \varphi_i(x)) & \text{if } x \in U_i.
\end{cases}
\]
Each $H_i$ is well defined because if $x \in \complement \overline{V_i} \cap U_i$, then $K_i(x, t \varphi_i(x)) = K_i(x,0) = f'(x)$.

Let $H = (H_0, \dots, H_n) \colon X \times I \to Z_\iota^\npu$ and $\Phi = H(\tiret, 1)$. 
We have $H(\tiret, 0) = \Delta_\npu \circ f'$. 
On the other hand, for any $x \in X$, there is (at least) one $i$ such that $x \in W_i$, so $\varphi_i(x) = 1$ and $H_i(x, 1) = K_i(x,1) \in A$; this means that $\Phi(X) \subseteq T^n(\mu)$. 
Thus $H$ is a homotopy between  $\Delta_\npu \circ f'$ and $t_n \circ \Phi$. 
\end{proof}

Note that the proof also holds for $\Topw$: in this case, the basepoint of $X$ belongs to each $h_i^\inv(\N)$ because $h_i$ is a pointed map and the homotopies $L_i$ and $H_i$ we build are pointed because $H$, $G$ and $K_i$ are pointed.

%%%%%%%%%%%%%%%%%%%%%%%%%%%%%%%%%%%%%%%%%%%%%%%%%%%%%%%%%
\section{Triangle inequality}\label{triangleinequality}
The `triangle inequality' is the following relation for maps $f, g, h$ in $\Jc$ with same domain and codomain:
$$D(f,h) \leq D(f,g) + D(g,h).$$

\begin{remark}From \defref{boolsecat}, we get a `boolean homotopic distance' $D^\bo$. By \propref{separation}, $D^\bo(f,g)$ is 0 if $f\simeq g$ and 1 otherwise. Clearly $D^\bo$  satisfies the triangle inequality for any maps.\end{remark}

\begin{theorem}\label{inegtriangulaire}
\cite[Proposition 3\textperiodcentered 16]{MacMos21}
Let $X$ be a normal space and $Y$ be any space. Then $\Dop$ satisfies the triangle inequality for any maps $f, g, h \colon X\to Y$.
\end{theorem}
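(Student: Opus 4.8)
The plan is to work with the open-cover description of $\Dop$ provided by \propref{dopeqliftcatop}. If either $\Dop(f,g)$ or $\Dop(g,h)$ is infinite there is nothing to prove, so I would assume $\Dop(f,g)=m$ and $\Dop(g,h)=p$ are both finite. Then fix an open cover $(U_i)_{0\leq i\leq m}$ of $X$ together with homotopies $F_i\colon U_i\times I\to Y$ from $f|_{U_i}$ to $g|_{U_i}$, and an open cover $(V_j)_{0\leq j\leq p}$ with homotopies $G_j$ from $g|_{V_j}$ to $h|_{V_j}$. On each intersection $U_i\cap V_j$ the concatenation $F_i\ast G_j$ is a homotopy from $f$ to $h$, so $f$ and $h$ agree up to homotopy on every $U_i\cap V_j$. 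Taking all $(m+1)(p+1)$ intersections only yields the weak bound $\Dop(f,h)\leq(m+1)(p+1)-1$; to reach the additive bound $\Dop(f,h)\leq m+p$ I must regroup these intersections into $m+p+1$ open sets, one for each value of the index sum $k=i+j$.

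The regrouping is where normality of $X$ enters. The goal is to produce, for each $k$ with $0\leq k\leq m+p$, an open set $W_k$ that is a disjoint union $W_k=\bigsqcup_{i+j=k}C_{ij}$ of open sets with $C_{ij}\subseteq U_i\cap V_j$, and with $\bigcup_k W_k=X$. Once this is done, disjointness within each level lets me glue the homotopies $F_i\ast G_j$ over the (clopen-in-$W_k$) pieces $C_{ij}$ into a single homotopy from $f|_{W_k}$ to $h|_{W_k}$; hence $(W_k)_{0\leq k\leq m+p}$ is a cover witnessing $\Dop(f,h)\leq m+p$ by \propref{dopeqliftcatop}, which is exactly the triangle inequality. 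To build the $C_{ij}$ I would use that a normal space admits partitions of unity subordinate to finite open covers: choose $(\rho_i)$ subordinate to $(U_i)$ and $(\sigma_j)$ subordinate to $(V_j)$, and set $\lambda_{ij}=\rho_i\sigma_j$, a partition of unity indexed by pairs with $\{\lambda_{ij}>0\}\subseteq U_i\cap V_j$. The natural candidate is $C_{ij}=\{x:\lambda_{ij}(x)>\lambda_{i'j'}(x)\text{ for all }(i',j')\neq(i,j)\text{ with }i'+j'=i+j\}$, which is open and, by strictness, pairwise disjoint for pairs of the same level.

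The main obstacle is precisely that this candidate may fail to cover $X$: a point lies in no $C_{ij}$ exactly when, at every level $k$, the maximal value of $\lambda$ over the pairs with $i+j=k$ is attained by at least two pairs, i.e. on the set of ``ties''. Resolving this is the delicate part of the argument, and it is where normality is used a second time. I would break ties by fixing a total order on the index pairs and assigning each point, within its dominant level, to the least maximizing pair; the subtlety is that the resulting sets mix strict and non-strict inequalities and so are not manifestly open, and one must either replace the naive level functions by a suitably strictly separated shrinking of the two covers (available since $X$ is normal) or invoke the standard combining lemma for normal spaces. I expect this tie-breaking (equivalently, shrinking) step to be the only genuine difficulty; the transitivity of homotopy, the gluing over disjoint opens, and the translation back through \propref{dopeqliftcatop} are all routine.

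Finally, I would note that the hypotheses match the mechanism exactly: the homotopies $F_i$, $G_j$ and their concatenations take values in the \emph{arbitrary} space $Y$, whereas the covers, the partitions of unity, and the shrinkings are all constructed inside the \emph{normal} space $X$. This is precisely why normality of $X$, with no condition on $Y$, is the right and sufficient hypothesis.
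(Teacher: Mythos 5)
Note first that the paper offers no proof of this theorem: it is quoted from \cite{MacMos21}, so you are reconstructing an external argument. Your overall strategy is the standard one and matches that source: translate everything through \propref{dopeqliftcatop}, concatenate the homotopies $F_i$ and $G_j$ on $U_i\cap V_j$, and use normality (via partitions of unity subordinate to the finite open covers, which normal spaces admit) to regroup the intersections into $m+p+1$ open sets $W_k=\bigsqcup_{i+j=k}C_{ij}$ whose pieces are pairwise disjoint, hence clopen in $W_k$, so that the concatenated homotopies glue. The reduction, the gluing, and the translation back are indeed routine, and you correctly locate the role of normality: all constructions happen in $X$, none in $Y$.

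As written, however, the proof does not close: you assert that your candidate $C_{ij}=\{x:\lambda_{ij}(x)>\lambda_{i'j'}(x)\ \forall\,(i',j')\neq(i,j),\ i'+j'=i+j\}$ ``may fail to cover $X$'' because of ties, and you defer the repair either to a tie-breaking scheme that you yourself observe produces non-open sets, or to an unnamed ``standard combining lemma'' --- which is precisely the statement carrying all the content, so nothing has been proved. That is the genuine gap in the write-up. The irony is that no repair is needed: the product structure $\lambda_{ij}=\rho_i\sigma_j$ already rules out the bad ties. Given $x$, put $i^*=\min\{i:\rho_i(x)>0\}$ and $j^*=\min\{j:\sigma_j(x)>0\}$; any other pair $(i,j)$ with $i+j=i^*+j^*$ has either $i<i^*$ (so $\rho_i(x)=0$) or $j<j^*$ (so $\sigma_j(x)=0$), hence $\lambda_{ij}(x)=0<\lambda_{i^*j^*}(x)$, and therefore $x\in C_{i^*j^*}$. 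So your sets do cover $X$. The only correction actually required is to add the condition $\lambda_{ij}(x)>0$ to the definition of $C_{ij}$: without it, a level containing a single pair (e.g.\ $k=0$ or $k=m+p$) gives $C_{ij}=X$ vacuously, which need not lie in $U_i\cap V_j$; with it, one has $C_{ij}\subseteq\{\rho_i>0\}\cap\{\sigma_j>0\}\subseteq U_i\cap V_j$, while openness, level-wise disjointness and the coverage argument above all go through, and the triangle inequality follows.
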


\propref{separation}, \propref{symetrie} and \thmref{inegtriangulaire} show that for spaces $X$ and $Y$, with $X$ normal, the homotopic distance is indeed a distance on the homotopy class of maps from $X$ to $Y$.

\smallskip
The triangle inequality is not a consequence of the axioms S0 to S2 because it is not true if the domain of the maps is not normal, see \examref{pseudocircle}. In this section, we list properties that follow from axioms S0 to S2, combined with the triangle inequality, which can be seen as an additional fourth axiom.

\begin{proposition}
For any maps $f, g\colon X \to Y$ and $f', g'\colon Y \to Z$ such that $X$ is normal, we have:
$$D(f'\circ f,g' \circ g) \leq D(f,g) + D(f',g').$$
\end{proposition}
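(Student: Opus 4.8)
The plan is to reduce the statement to the two ``one-sided composition'' inequalities already proved, namely \propref{compositionagauche} and \propref{compositionadroite}, and then invoke the triangle inequality (\thmref{inegtriangulaire}) on an intermediate map. The natural intermediate is $g' \circ f$, which interpolates between $f' \circ f$ and $g' \circ g$ by changing one factor at a time.

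First I would apply the triangle inequality (valid since $X$ is normal) to the three maps $f'\circ f$, $g'\circ f$, and $g'\circ g$, all of which go from $X$ to $Z$:
$$D(f'\circ f, g' \circ g) \leq D(f'\circ f, g'\circ f) + D(g'\circ f, g'\circ g).$$
Next I would bound each term on the right by the appropriate one-sided distance. For the first term, the two maps share the right factor $f$, so by \propref{compositionadroite} (post-composition on the right by $f$), $D(f'\circ f, g'\circ f) \leq D(f', g')$. For the second term, the two maps share the left factor $g'$, so by \propref{compositionagauche} (pre-composition on the left by $g'$), $D(g'\circ f, g'\circ g) \leq D(f,g)$. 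Combining these with the triangle inequality yields
$$D(f'\circ f, g'\circ g) \leq D(f',g') + D(f,g),$$
which is exactly the claimed bound.

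I expect the only point requiring care to be matching the variance of the two composition lemmas to the two terms: \propref{compositionadroite} concerns maps $f,g\colon X\to Y$ precomposed with $h\colon W\to X$ and gives $D(f\circ h, g\circ h)\leq D(f,g)$, whereas \propref{compositionagauche} concerns maps $f,g\colon W\to X$ postcomposed with $h\colon X\to Y$ and gives $D(h\circ f, h\circ g)\leq D(f,g)$. Here the varying factor in the first term is the \emph{outer} map (so the shared inner map $f$ plays the role of $h$ in \propref{compositionadroite}), while the varying factor in the second term is the \emph{inner} map (so the shared outer map $g'$ plays the role of $h$ in \propref{compositionagauche}). Once these identifications are made, no genuine obstacle remains; the normality hypothesis on $X$ is used solely to license the triangle inequality, and the argument is otherwise a purely formal two-step estimate.
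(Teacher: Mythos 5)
Your proof is correct and follows essentially the same route as the paper: a triangle-inequality interpolation through a mixed composite, with each term bounded by Propositions~\ref{compositionadroite} and~\ref{compositionagauche}. The only (immaterial) difference is your choice of intermediate map $g'\circ f$ where the paper uses $f'\circ g$.
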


\begin{proof}Apply the triangle inequality to $f'\circ f$, $f' \circ g$ and $g' \circ g$. So we have:
$$D(f'\circ f, g' \circ g) \leq D(f'\circ f, f' \circ g) + D(f' \circ g, g'\circ g) \leq  D(f,g) + D(f', g').$$
The last inequality is given by Propositions \ref{compositionagauche} and \ref{compositionadroite}.
\end{proof}

\begin{proposition}\label{inegproduit}
For any maps $f, g \colon X\to Y$ and $f', g' \colon X'\to Y'$ such that $X \times X'$ is normal, we have:
$$D(f\times f', g\times g') \leq D(f, g) + D(f', g').$$
\end{proposition}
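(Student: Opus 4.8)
The plan is to exploit the normality of $X \times X'$ so that \thmref{inegtriangulaire} applies to the three maps $f\times f'$, $g\times g'$, and a suitable interpolant, all going from $X \times X'$ to $Y \times Y'$. The natural intermediate is $g \times f'$, which shares its second coordinate with $f \times f'$ and its first coordinate with $g \times g'$. Applying the triangle inequality to $f\times f'$, $g\times f'$, and $g\times g'$ gives
$$D(f\times f', g\times g') \leq D(f\times f', g\times f') + D(g\times f', g\times g').$$

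For the first term, the two entries share the common \emph{right} factor $f'$, so it has exactly the shape governed by \propref{timesh}, whence $D(f\times f', g\times f') = D(f,g)$. For the second term the common factor $g$ sits on the \emph{left}, so I first need a left-handed analogue of \propref{timesh}, namely $D(h\times f', h\times g') = D(f', g')$. I would obtain this from \propref{timesh} by conjugating with the coordinate-swap homeomorphisms $\sigma\colon X\times X' \to X'\times X$ and $\tau\colon Y'\times Y \to Y\times Y'$: one checks directly that $h\times f' = \tau \circ (f'\times h)\circ \sigma$ and $h\times g' = \tau\circ(g'\times h)\circ\sigma$. Then \propref{compositionadroite} (with the homeomorphism $\sigma$) and \propref{compositionagauche} (with the homeomorphism $\tau$) reduce $D(h\times f', h\times g')$ to $D(f'\times h, g'\times h)$, which equals $D(f', g')$ by \propref{timesh}. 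Taking $h = g$ yields $D(g\times f', g\times g') = D(f', g')$.

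Combining the two estimates gives $D(f\times f', g\times g') \leq D(f,g) + D(f',g')$, as required. The only step beyond routine bookkeeping is establishing the left-handed form of \propref{timesh}; once the swap homeomorphisms are introduced, everything else is a direct substitution using the homotopy-invariance of $D$ under composition with homeomorphisms. An alternative route avoiding the conjugation would be to first invoke the unlabeled product-swap identity proved just before \propref{timesh} to move the constant factor $g$ to the right, and then apply \propref{timesh} directly; I expect the conjugation argument to be the cleaner of the two.
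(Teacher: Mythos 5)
Your proof is correct and follows essentially the same route as the paper: the paper likewise applies the triangle inequality to $f\times f'$, $g\times f'$, $g\times g'$ and evaluates both resulting terms by \propref{timesh}, silently using the left-handed form of that proposition for $D(g\times f', g\times g')$, which you instead justify explicitly (and correctly) by conjugating with the swap homeomorphisms. One caveat on your closing aside only: the unlabeled identity $D(f\times f', g\times g') = D(f\times g', g\times f')$ exchanges the primed maps between the two products but does not reorder the factors of $X\times X'$, so it cannot by itself ``move the constant factor $g$ to the right''; stick with the conjugation argument.
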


\begin{proof}Apply the triangle inequality to $f\times f'$, $g \times f'$ and $g \times g'$. So we have:
$$D(f \times f', g\times g') \leq D(f\times f', g \times f') + D(g\times f', g\times g') =  D(f,g) + D(f', g').$$
The last equality is given by \propref{timesh}.
\end{proof}

As a particular case we have:
\begin{corollary}For any spaces $A$ and $B$ such that $(A\times A) \times (B\times B)$ is normal, we have:
$$\TC(A \times B) \leq \TC(A) + \TC(B).$$
\end{corollary}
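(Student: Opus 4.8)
The plan is to deduce the corollary directly from \propref{inegproduit} by a suitable reading of the product structure. The key observation is that the topological complexity is just the homotopic distance between the two projections, by \remref{TCestunedistance}: for any space $Y$ we have $\TC(Y) = D(\prun, \prdeux)$, where $\prun, \prdeux\colon Y\times Y \to Y$ are the two projections. So I would rewrite all three terms of the desired inequality as homotopic distances and then invoke the product inequality.

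Concretely, I would set $Y = A\times B$ and write $\TC(A\times B) = D(\prun, \prdeux)$, where now $\prun, \prdeux\colon (A\times B)\times (A\times B) \to A\times B$ are the projections of the square of $A\times B$. The heart of the matter is to recognize these projections as products of the projections on the $A$-factors and the $B$-factors after a harmless permutation of the four factors. Writing $(A\times A)\times (B\times B)$ for the reordered domain, the first projection becomes $p_A\times p_B$ and the second becomes $q_A\times q_B$, where $p_A,q_A\colon A\times A \to A$ and $p_B,q_B\colon B\times B \to B$ are the respective projections. This reshuffling is a homeomorphism, so by \propref{compositionadroite} (composition with a homotopy equivalence on the right) it does not change the homotopic distance.

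With this identification in place, I would apply \propref{inegproduit} to the maps $p_A, q_A\colon A\times A \to A$ and $p_B, q_B\colon B\times B \to B$, whose product domain $(A\times A)\times(B\times B)$ is normal by hypothesis. This yields
$$D(p_A\times p_B,\, q_A\times q_B) \leq D(p_A, q_A) + D(p_B, q_B).$$
Finally I would read off the right-hand side using \remref{TCestunedistance} again: $D(p_A,q_A) = \TC(A)$ and $D(p_B,q_B) = \TC(B)$, while the left-hand side equals $\TC(A\times B)$ by the reordering discussed above.

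The only real subtlety, and the step I expect to require the most care, is the bookkeeping of the factor permutation that turns $(A\times B)\times(A\times B)$ into $(A\times A)\times(B\times B)$ and simultaneously turns the two diagonal projections into the products $p_A\times p_B$ and $q_A\times q_B$. This is precisely where the normality hypothesis is phrased for the reordered space $(A\times A)\times(B\times B)$ rather than for $(A\times B)\times(A\times B)$; since the two are homeomorphic, normality transfers, and the homotopy invariance of $D$ (or \propref{compositionadroite} applied to the permutation homeomorphism) guarantees the distances agree. Everything else is a direct substitution, so no genuine obstacle remains once the permutation is handled cleanly.
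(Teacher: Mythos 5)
Your proposal is correct and follows essentially the same route as the paper: apply \propref{inegproduit} to the projections $A\times A\to A$ and $B\times B\to B$ and identify the resulting distances with topological complexities via \remref{TCestunedistance}. Your explicit treatment of the factor-shuffling homeomorphism between $(A\times B)\times(A\times B)$ and $(A\times A)\times(B\times B)$ is a detail the paper leaves implicit, and you handle it correctly.
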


\begin{proof}
Use \propref{inegproduit} where $f$ (resp. $g$)$\colon A\!\times\! A\to A$ and $f'$ (resp. $g'$)$\colon B\!\times\!B\to B$ are the projections onto the first (resp. second) factor, and use \remref{TCestunedistance}.
\end{proof}
%%%%%%%%%%%%%%%%%%%%%%%%%%%%%%%%%%%%%%%%%%%%%%%%%%%%%%%%%
\section[LS-category]{Lusternik-Schnirelmann category}\label{lscategory}

In this section, we come to notions of category that were actually the very first of their kind, known as the `Lusternik-Schnirelmann category' of a space or `Clapp-Puppe category' of a map. We insert them into the axiomatic framework constructed in the previous sections.

\medskip
We didn't need constant maps so far. In this section, we work with `constant maps' $c \colon \ast \to Y$, where $\ast$ denotes the space with a single element.  A map $n\colon X \to Y$ is `constant' (resp. `nullhomotopic') if there is a commutative (resp. homotopy commutative) diagram:
$$\xymatrix@R=1pc{
   X \ar[rr]^n\ar[rd] && Y \\
   & \ast\ar[ru]_c
}$$

A space $Y$ is path-connected if all constant maps $c\colon \ast \to Y$ are homotopic.

\begin{definition}[LS~category] The `category of a map' $f \colon X \to Y$ in $\Jc$, with $Y$ path-connected, is the lifting category from $f$ to any constant map $c\colon * \to Y$. We denote $\cat(f) = \liftcat_f (c)$.

The category of $\id_Y$ is called `category of the space' $Y$. We denote $\cat(Y) = \cat(\id_Y) = \secat(c)$.

%In the same way, we denote  $\catop(f) = \liftcatop_f (c)$ and $\catop(Y) = \secatop(c)$.
\end{definition}

\smallskip
The path-connectedness is required in $\Top$ for the notion to be well defined. Indeed if $Y$ is not path-connected we have constant maps $c_1\colon * \to Y$ and $c_2\colon * \to Y$ with $c_1 \not\simeq c_2$, so $\liftcat_{c_1}\,(c_1) = 0$ while $\liftcat_{c_1}\,(c_2) = \infty$. The path-connectedness is not required in $\Topw$ because there, for any space $Y$, there is only one constant map $* \to Y$ which is the inclusion of the basepoint; so  ``path-connected'' may be omitted in $\Topw$.

Warning: The most common definition of the LS category of a  space $Y$ is the least integer $n$ such that $Y$ can be covered by $n+1$ open subsets $U_i$ such that the inclusion $U_i \hookrightarrow Y$ is nullhomotopic. Of course this definition coincides with the one above if $Y$ is path-connected. If not, it cannot be expressed in terms of $\secatop(c)$ for just one constant map $c$. The same goes for the category of a map.

\begin{proposition}\label{secatleqcatbut}For any map $\alpha\colon A \to Y$ in $\Jc$, with $Y$ path-connected, we have:
$$\secat(\alpha) \leq \cat(Y).$$
If $\alpha$ is nullhomotopic, then $\secat(\alpha) = \cat(Y)$.
\end{proposition}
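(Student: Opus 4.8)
The plan is to prove both statements using the machinery already established for lifting and sectional category. For the inequality $\secat(\alpha) \leq \cat(Y)$, recall that $\cat(Y) = \secat(c)$ where $c \colon \ast \to Y$ is a constant map. The natural idea is to find a homotopy commutative triangle over $Y$ with $\alpha$ on one side and $c$ on the other, so that Axiom~S1 applies. Since $\ast$ is terminal, there is a unique map $A \to \ast$, and composing with $c$ gives a constant map $\bar{c}\colon A \to Y$ which factors as $A \to \ast \xrightarrow{c} Y$. First I would observe that we cannot directly compare $\alpha$ and $c$ via S1, since their domains differ; instead the key is to recognize that $\secat(c)$ itself equals $\secat$ of the constant map $\bar c \colon A \to Y$ obtained by precomposing $c$ with $A \to \ast$.

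The cleanest route is: apply Axiom~S1 to the triangle
$$\xymatrix@R=1pc{
   A \ar[rr]^{\bar c}\ar[rd]_\alpha && \ast \ar[ld]^c \\
   & Y
}$$
Wait---this triangle is over $Y$ only if we orient it correctly. The correct triangle has apex $Y$ with $\alpha, c$ as the two legs down to $Y$ and a top map $A \to \ast$. So I would set up the homotopy commutative diagram with $\alpha \colon A \to Y$ and the constant map $\bar c \colon A \to Y$ (through $\ast$), noting $\bar c = c \circ (A \to \ast)$. Then I expect to use \propref{inegessent} or \propref{composition}: by \propref{composition}, $\liftcat$ to $c$ composed appropriately is bounded by $\secat(c) = \cat(Y)$, and the factorization $\bar c = c \circ (!)$ where $! \colon A \to \ast$ gives $\secat(\bar c) \leq \secat(c) = \cat(Y)$ via S1 applied to the triangle $A \xrightarrow{!} \ast \xrightarrow{c} Y$ with $\bar c$ the hypotenuse. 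Finally, since $\bar c \simeq \alpha$ is generally false, the bridge must be different: I would instead invoke \obsref{secatisaliftcat} to write $\secat(\alpha) = \liftcat_{\id_Y}(\alpha)$ and compare lifting categories.

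Reconsidering, the genuinely clean argument is: by S1 with $\zeta = (A \to \ast)$, $\iota = c$, $\kappa = \alpha$---that is, the triangle whose top edge is $A \to \ast$, whose right leg is $c$, and whose left leg is $\alpha$---requires $\alpha \simeq c \circ (A \to \ast) = \bar c$, which need not hold. So S1 alone does not suffice, and the real content is the second sentence. The main obstacle is establishing the inequality for \emph{arbitrary} $\alpha$, not just nullhomotopic ones. I expect the intended proof uses \propref{inegessent}: build the homotopy pullback of $\alpha$ (or $c$) against a suitable map and apply part (1), which gives $\liftcat_f(\iota) \leq \inf\{\secat(\kappa),\secat(\iota)\}$. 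Concretely, form the homotopy pullback of $c\colon \ast \to Y$ along $\alpha$; the resulting corner map has sectional category bounded by both $\secat(\alpha)$ and $\secat(c) = \cat(Y)$, and one identifies this lifting category with $\secat(\alpha)$ when the pullback degenerates appropriately.

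For the equality when $\alpha$ is nullhomotopic: if $\alpha \simeq \bar c$ (the constant map through $\ast$), then by homotopy invariance (\cororef{liftcathominvariant}) $\secat(\alpha) = \secat(\bar c)$, and $\secat(\bar c) = \secat(c)$ because $\bar c$ factors through $c$ via the map $A \to \ast$ which has a homotopy section (any point of $A$ gives $\ast \to A$), so \propref{inegessent}(3) or S1 in both directions forces equality. Thus $\secat(\alpha) = \secat(c) = \cat(Y)$. The main obstacle I anticipate is handling the direction $\secat(c) \leq \secat(\alpha)$ in the nullhomotopic case: this needs a section of $A \to \ast$, i.e.\ a point of $A$, which is available since spaces are assumed non-empty (as stated in \secref{tools}), so the map $\ast \to A$ exists and yields $c \simeq \bar c \circ (\ast \to A)$, giving $\secat(c) \leq \secat(\bar c) = \secat(\alpha)$ by S1.
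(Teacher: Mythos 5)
There is a genuine gap: you never establish the main inequality $\secat(\alpha)\leq\cat(Y)$ for arbitrary $\alpha$. You correctly observe that the triangle with top edge $A\to\ast$ requires $\alpha$ to be nullhomotopic, but you then conclude that ``S1 alone does not suffice'' --- that is wrong, because you only tried one orientation. The triangle that works has top edge $\zeta\colon\ast\to A$, a choice of point of $A$ (available since spaces are assumed non-empty), with legs $\kappa=c\colon\ast\to Y$ and $\iota=\alpha\colon A\to Y$. It homotopy-commutes because $\alpha\circ\zeta$ is a constant map into the path-connected space $Y$, hence homotopic to $c$; Axiom~S1 then gives $\secat(\alpha)\leq\secat(c)=\cat(Y)$ directly, for \emph{any} $\alpha$. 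This is exactly the paper's proof, and it is the same ``pick a point of $A$'' trick you do eventually use in the nullhomotopic case --- you just failed to notice it applies in general. Your fallback route does not repair this: forming the homotopy pullback of $c$ along $\alpha$ computes $\liftcat_\alpha(c)$, which is $\cat(\alpha)$ and not $\secat(\alpha)$, and the claimed identification ``when the pullback degenerates appropriately'' is unfounded. (Applying \propref{inegessent}\petitespace(1) with $f=\id_Y$ does work, but that is literally S1 again, not a pullback of $c$ along $\alpha$.)

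The nullhomotopic part of your argument is essentially correct and matches the paper: $\secat(\alpha)=\secat(\bar c)$ by homotopy invariance, and $\secat(\bar c)=\secat(c)$ by S1 applied in both directions to the factorizations $\bar c=c\circ(A\to\ast)$ and $c\simeq\bar c\circ(\ast\to A)$. Note, though, a direction slip in your last sentence: the factorization $c\simeq\bar c\circ(\ast\to A)$ gives $\secat(\bar c)\leq\secat(c)$ via S1 (with $\kappa=c$, $\iota=\bar c$), not $\secat(c)\leq\secat(\bar c)$ as you wrote; the inequality $\secat(c)\leq\secat(\bar c)$ comes from the other factorization, $\bar c=c\circ(A\to\ast)$.
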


\begin{proof}Apply Axiom~S1 with $\iota = \alpha$ and $\kappa$ a constant map $\ast\to Y$ to get the inequality. If $\alpha$ is nullhomotopic, apply the same axiom with $\iota$ a constant map and $\kappa = \alpha$ to get the inequality in the other direction.\end{proof}

\begin{proposition}\label{majorationliftcat}Let $f\colon X \to Y$  and $\iota \colon A \to Y$ be maps in $\Jc$, with $X$ path-connected.
We have: $$\liftcat_f(\iota) \leq \inf\{\cat(X), \secat(\iota)\}.$$
\end{proposition}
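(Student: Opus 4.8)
The plan is to establish the two inequalities $\liftcat_f(\iota) \leq \secat(\iota)$ and $\liftcat_f(\iota) \leq \cat(X)$ separately, since the infimum of two quantities is bounded above by $\liftcat_f(\iota)$ precisely when each quantity individually bounds it. The first inequality is immediate and requires no hypothesis: it is exactly \obsref{L1}, which states $\liftcat_f(\iota) \leq \secat(\iota)$ for any pair of maps with common codomain.

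The real content is the bound $\liftcat_f(\iota) \leq \cat(X)$, and this is where the path-connectedness of $X$ enters. First I would recall that $\cat(X) = \cat(\id_X) = \liftcat_{\id_X}(c)$ for a constant map $c\colon \ast \to Y$—wait, more precisely $\cat(X) = \secat(c_X)$ where $c_X\colon \ast \to X$ is constant. The idea is to compare the lifting problem for $(f,\iota)$ with the lifting problem defining $\cat(X)$. Since $X$ is path-connected, a constant map $c_X\colon \ast \to X$ composes with $f$ to give a constant (hence in particular a specific) map $\ast \to Y$; the key observation is that any map factoring through the point $\ast$ automatically lifts through $\iota$ up to the data controlling $\cat(X)$. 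Concretely, I would apply \propref{composition}, which gives $\liftcat_{f\circ g}(\iota) \leq \liftcat_f(\iota)$ for any $g$, but here I want the reverse flavour, so instead I would lean on the monotonicity \propref{L2}.

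The cleanest route is to exhibit a homotopy commutative diagram that lets \propref{L2} compare $\liftcat_f(\iota)$ with $\liftcat_{c'}(\iota)$ where $c'\colon X \to Y$ is a suitable constant (nullhomotopic) map, together with the fact that for a path-connected domain $f$ is related to $c'$ through $\cat$. Specifically, I would use that $\cat(X) = \liftcat_f(c_Y)$ cannot be directly invoked, so the intended argument is: build the homotopy pullback of $f$ and $\iota$ to get $q\colon P \to X$ with $\liftcat_f(\iota) = \secat(q)$ by \defref{liftcatisasecat}; then apply \propref{secatleqcatbut} to the map $q$, giving $\secat(q) \leq \cat(X)$ provided $X$ is path-connected. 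This is the heart of the matter: \propref{secatleqcatbut} says $\secat(\alpha) \leq \cat(Y')$ for any $\alpha$ into a path-connected $Y'$, applied here with $\alpha = q$ and $Y' = X$.

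The main obstacle will be checking that the path-connectedness hypothesis transfers correctly and that \propref{secatleqcatbut} applies to $q\colon P \to X$ with target $X$ (not $Y$). Since \propref{secatleqcatbut} requires the \emph{codomain} of the map to be path-connected, and the codomain of $q$ is indeed $X$, the hypothesis ``$X$ path-connected'' is exactly what is needed; one must simply confirm that $\cat(X)$ here means $\secat(c_X)$ for $c_X\colon \ast \to X$, matching the target of $q$. Combining $\liftcat_f(\iota) = \secat(q) \leq \cat(X)$ with \obsref{L1} yields the two desired bounds, and taking the infimum completes the proof.
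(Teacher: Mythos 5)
Your proof is correct and follows essentially the same route as the paper: the paper's one-line proof applies \propref{inegessent}\,(1) with $B=\ast$ and $\kappa$ a constant map $\ast\to X$, which packages exactly your two bounds (\obsref{L1} for the $\secat(\iota)$ part, and Axiom~S1 applied to the homotopy pullback map $q\colon P\to X$ for the $\cat(X)$ part). Your detour through \defref{liftcatisasecat} and \propref{secatleqcatbut} applied to $q$ bottoms out at the same application of Axiom~S1, relying on the same blanket non-emptiness assumption to produce a point of $P$.
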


\begin{proof}Apply \propref{inegessent}\petitespace(1) with $B = \ast$.
\end{proof}

\begin{corollary}\label{catfleqcatdomcodom}Let $f \colon X \to Y$ be any map in $\Jc$, with $X$ and $Y$ path-connected. 
We have: $$\cat(f) \leq \inf \{\cat(X), \cat(Y)\}.$$
\end{corollary}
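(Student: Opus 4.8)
The plan is to recognize this corollary as an immediate specialization of \propref{majorationliftcat}. The latter states that for $X$ path-connected and any maps $f\colon X \to Y$, $\iota\colon A \to Y$, we have $\liftcat_f(\iota) \leq \inf\{\cat(X), \secat(\iota)\}$. So the only move required is to feed the right $\iota$ into this inequality.

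Concretely, I would take $\iota = c\colon \ast \to Y$ to be a constant map. Since $Y$ is assumed path-connected, all such constant maps are homotopic, so the choice of $c$ is immaterial (by homotopy invariance, \cororef{liftcathominvariant}) and the notions $\cat(f)$ and $\cat(Y)$ are well defined. Substituting $\iota = c$ into \propref{majorationliftcat} gives
$$\liftcat_f(c) \leq \inf\{\cat(X), \secat(c)\}.$$
Now I would simply unpack the two definitions: by definition of the category of a map, $\liftcat_f(c) = \cat(f)$, and by definition of the category of a space, $\secat(c) = \cat(\id_Y) = \cat(Y)$. Rewriting the displayed inequality with these identifications yields exactly $\cat(f) \leq \inf\{\cat(X), \cat(Y)\}$.

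There is essentially no obstacle here; the result is a definitional repackaging of \propref{majorationliftcat}. The only points to check are bookkeeping: that $X$ path-connected supplies the hypothesis needed for \propref{majorationliftcat}, and that $Y$ path-connected guarantees the constant map $c$ (and hence $\cat(f)$ and $\cat(Y)$) is well defined. No further computation is needed.
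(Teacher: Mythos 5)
Your proposal is correct and is exactly the argument the paper intends: the corollary is stated without proof immediately after \propref{majorationliftcat}, the implicit justification being precisely the specialization $\iota = c\colon \ast \to Y$ together with the definitions $\cat(f) = \liftcat_f(c)$ and $\cat(Y) = \secat(c)$. Your remark that path-connectedness of $Y$ makes the choice of $c$ immaterial is the right bookkeeping point and nothing further is needed.
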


\begin{corollary}\label{majorationD}Let $f, g\colon X \to Y$ be maps in $\Jc$, with $X$ path-connected.
We have: $$D(f,g) \leq \inf\{\cat(X), \TC(Y)\}.$$\end{corollary}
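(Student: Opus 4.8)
The plan is to deduce this from Corollary~\ref{majorationD}'s immediate predecessors, namely \propref{majorationliftcat} together with \defref{defhomdist}. Recall that $D(f,g)$ is by definition the lifting category $\liftcat_{(f,g)}(\Delta)$, where $(f,g)\colon X \to Y\times Y$ and $\Delta\colon Y \to Y\times Y$ is the diagonal. So the statement $D(f,g) \leq \inf\{\cat(X),\TC(Y)\}$ will follow once I bound this particular lifting category by each of the two quantities separately.

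First I would invoke \propref{majorationliftcat} directly with the map $(f,g)$ playing the role of $f$ and the diagonal $\Delta$ playing the role of $\iota$. Since $X$ is path-connected by hypothesis, that proposition gives
$$\liftcat_{(f,g)}(\Delta) \leq \inf\{\cat(X), \secat(\Delta)\}.$$
By \defref{defTC}, we have $\secat(\Delta) = \TC(Y)$, and by \defref{defhomdist} the left-hand side is exactly $D(f,g)$. Substituting both identifications yields
$$D(f,g) \leq \inf\{\cat(X), \TC(Y)\},$$
which is precisely the claim.

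The argument is therefore a one-line specialization, and I expect no real obstacle: the entire content has already been packaged into \propref{majorationliftcat} (which itself rests on \propref{inegessent}(1) applied with $B = \ast$). The only point requiring any care is checking that the path-connectedness hypothesis on $X$ is genuinely what \propref{majorationliftcat} needs — it is, since that proposition bounds $\liftcat_f(\iota)$ by $\cat(X)$ only when $X$ is path-connected, so that $\cat(X) = \liftcat_{\id_X}(c)$ is well defined for a constant map $c\colon \ast \to X$. No path-connectedness of $Y$ is needed, since $\TC(Y) = \secat(\Delta)$ makes sense for any $Y$. Thus the proof reduces to recalling the two definitions and quoting the prior proposition.
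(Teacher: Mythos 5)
Your proof is correct and is exactly the paper's argument: the paper's proof of this corollary is the one-liner ``Apply Proposition~\ref{majorationliftcat} with $\iota = \Delta$,'' which is precisely your specialization with $(f,g)$ in the role of the map and $\Delta$ in the role of $\iota$, followed by the identifications $\secat(\Delta)=\TC(Y)$ and $\liftcat_{(f,g)}(\Delta)=D(f,g)$. Your remark about where path-connectedness of $X$ is used (and that none is needed for $Y$) is accurate.
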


\begin{proof}Apply \propref{majorationliftcat} with $\iota = \Delta$.\end{proof}

For path-connected spaces $X$ and $Y$, we denote $\inun =  (\id_X, n) \colon X\to X\times Y$, where $n\colon X \to Y$ is a constant map,
in the following (homotopy) pullbacks:
$$\xymatrix@C=3pc{
  X\ar[r]_\inun\ar@{->>}[d] \ar@/^1pc/[rr]^{\id_X}& X\times Y\ar@{->>}[d]^\prdeux\ar[r]_\prun & X\ar@{->>}[d]\\
  \ast \ar[r] & Y\ar[r] & \ast
}$$
and in the same way, we denote $\indeux = (n', \id_Y) \colon Y\to X\times Y$,  where $n'\colon Y \to X$ is a constant map.

\begin{proposition}\label{catestunedistance}Assume X is path-connected and let $n_X\colon X \to X$ be a constant map. We have:
$$D(\id_X, n_X) = \cat(X) \leq \TC(X) \leq \cat(X\times X).$$
\end{proposition}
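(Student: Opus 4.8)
The plan is to establish \propref{catestunedistance} by proving the three relations separately, each time reducing the claim to results already available in the paper.

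For the first equality $D(\id_X, n_X) = \cat(X)$, I would unfold the definitions. By \defref{defhomdist}, $D(\id_X, n_X) = \liftcat_{(\id_X, n_X)}(\Delta)$. The key observation is that the whisker map $(\id_X, n_X)\colon X \to X\times X$ is exactly the map $\inun = (\id_X, n)$ appearing in the displayed pullback just before the statement (with $Y$ replaced by $X$). So I would apply \propref{etonnant}, which gives $D(\id_X, n_X) = \liftcat_{(\id_X, \id_X)}(\id_X, n_X)$; but more directly, I expect the cleanest route is to compare the two pullbacks. Since $\inun$ is a section of the projection $\prun\colon X\times X \to X$, and $\cat(X) = \secat(c) = \liftcat_{\id_X}(c)$ for a constant map $c\colon \ast \to X$, I would fit these into a single homotopy commutative diagram and invoke \propref{L2}. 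Concretely, the pullback of $\Delta\colon X\to X\times X$ along $(\id_X,n_X)$ should be homotopy equivalent to the pullback of $c\colon \ast\to X$ along $\id_X$, because pulling back $\Delta$ along $(\id_X, n_X)$ records the locus where the identity agrees (up to homotopy) with the constant map $n_X$, which is precisely the data defining $\cat(X)$.

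The middle inequality $\cat(X) \leq \TC(X)$ follows from \cororef{majorationD}: applying it with $f = \id_X$, $g = n_X$ gives $D(\id_X, n_X) \leq \inf\{\cat(X), \TC(X)\} \leq \TC(X)$, and combined with the first equality $D(\id_X, n_X) = \cat(X)$, this yields $\cat(X) \leq \TC(X)$ immediately. For the last inequality $\TC(X) \leq \cat(X\times X)$, I would recall that $\TC(X) = \secat(\Delta)$ by \defref{defTC}, and $\cat(X\times X) = \secat(c')$ for a constant map $c'\colon \ast \to X\times X$ by definition. Since $\Delta\colon X \to X\times X$ is nullhomotopic only when $X$ is contractible, I cannot directly invoke \propref{secatleqcatbut} to get equality, but that proposition's inequality $\secat(\alpha) \leq \cat(X\times X)$ holds for \emph{any} map $\alpha\colon A \to X\times X$ with $X\times X$ path-connected. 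Taking $\alpha = \Delta$ gives exactly $\TC(X) = \secat(\Delta) \leq \cat(X\times X)$.

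\emph{The main obstacle} I anticipate is the first equality $D(\id_X, n_X) = \cat(X)$, since it requires carefully identifying the homotopy pullback defining $\liftcat_{(\id_X, n_X)}(\Delta)$ with the one defining $\cat(X) = \secat(c)$. The cleanest argument is likely through \propref{etonnant} followed by recognizing that $\liftcat_{(\id_X, f)}(\id_X, g)$ with $g = n_X$ constant reduces to a lifting problem over the constant map, but verifying that the whisker structure matches the section $\inun$ in the displayed pullback will need a short diagram chase invoking \propref{L2}. The other two inequalities are essentially immediate consequences of \cororef{majorationD} and \propref{secatleqcatbut}, so I expect no difficulty there.
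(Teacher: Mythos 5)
Your proposal is correct and follows essentially the same route as the paper: the paper likewise identifies the square formed by $c\colon \ast \to X$, $\Delta$, and $\inun = (\id_X, n_X)$ as a homotopy pullback (via the Prism lemma applied with the fibration $\prdeux$), deduces $D(\id_X,n_X)=\cat(X)$ and $\cat(X)\leq\TC(X)$ from \propref{homdistisasecat}, and obtains $\TC(X)\leq\cat(X\times X)$ from \propref{secatleqcatbut} exactly as you do. The only cosmetic difference is that you route the middle inequality through \cororef{majorationD} rather than directly through \obsref{L1}.
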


\begin{proof}Using the Prism lemma in the following commutative diagram:
$$\xymatrix@C=3pc{
  \ast \ar[r]\ar[d]^c & X\ar[d]_\Delta\ar@/^2pc/[dd]^{\id_X} \\
 X\ar[r]_(.4){\inun}\ar@{->>}[d] & X\times X\ar@{->>}[d]_\prdeux \\
  \ast \ar[r] & X
}$$
we see that the upper square is a homotopy pullback, so we get the equality and the first inequality by \propref{homdistisasecat}.
%by \propref{inegessent}\petitespace(2), $\liftcat_{\inun}(\Delta) = \secat(c) \leq \secat(\Delta)$. But by definition,  $\liftcat_{\inun}(\Delta) = D(\id_X, n_X)$, $\secat(c) = \cat(X)$ and $\secat(\Delta) =\TC(X)$.
We get the last inequality by \propref{secatleqcatbut}.
\end{proof}

\begin{corollary}If $X$ is path-connected, we have $D(\inun,\indeux) = \cat(X)$.\end{corollary}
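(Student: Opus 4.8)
The plan is to prove the final corollary, namely that $D(\inun,\indeux) = \cat(X)$ for path-connected $X$, by reducing it to the equality $D(\id_X, n_X) = \cat(X)$ already established in \propref{catestunedistance}. The key observation is that $\inun = (\id_X, n)\colon X \to X\times Y$ and $\indeux = (n', \id_Y)\colon Y \to X\times Y$ were defined in the display preceding that proposition, but for the corollary to even typecheck as a homotopic distance $D(\inun,\indeux)$, both maps must share the same domain and codomain. So first I would note that the intended setting must be $Y = X$, so that $\inun = (\id_X, n_X)$ and $\indeux = (n_X', \id_X)$ are both maps $X \to X\times X$, and $D(\inun,\indeux) = \liftcat_{(\inun,\indeux)}(\Delta)$ makes sense with $\Delta\colon X\times X \to (X\times X)\times(X\times X)$.

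The main idea is that $\inun = (\id_X, n_X)$ is, up to the constant in the second coordinate, the same data as the pair $(\id_X, n_X)$ that appears in \propref{catestunedistance}. Concretely, I would exploit \propref{symetrie} (symmetry of $D$) together with \propref{compositionagauche} and \propref{compositionadroite} to relate $D(\inun, \indeux)$ to $D(\id_X, n_X)$. First I would observe that composing $\inun$ and $\indeux$ with the first projection $\prun\colon X\times X \to X$ gives $\prun\circ\inun = \id_X$ and $\prun\circ\indeux = n_X'$, a constant map; by \propref{compositionagauche} this yields $D(\id_X, n_X') \leq D(\inun, \indeux)$, and since $D(\id_X, n_X') = \cat(X)$ by \propref{catestunedistance} (all constant maps agreeing up to homotopy by path-connectedness and \propref{Dhominvariant}), we get $\cat(X) \leq D(\inun,\indeux)$.

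For the reverse inequality $D(\inun, \indeux) \leq \cat(X)$, I would use \cororef{majorationD}, which gives $D(\inun, \indeux) \leq \inf\{\cat(X), \TC(X\times X)\} \leq \cat(X)$, provided the common domain $X$ is path-connected, which it is. Combining the two inequalities gives the desired equality. Alternatively, one could note directly that $\inun$ and $\indeux$ are both sections of the respective projections and are homotopic to a common map through the path-connectedness of $X$, but routing through \cororef{majorationD} for the upper bound and \propref{compositionagauche} for the lower bound is cleanest.

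The step I expect to be the main obstacle is pinning down the precise meaning of the symbols $\inun$ and $\indeux$ in the corollary, since as literally written before \propref{catestunedistance} they have different codomains ($X\times Y$) and the constant maps $n, n'$ point in opposite directions; the homotopic distance $D(\inun,\indeux)$ only makes sense once one fixes $Y = X$ so that both are maps $X \to X\times X$. Once this interpretation is fixed, the remaining steps are routine applications of the composition and symmetry properties together with the two bounding results \cororef{majorationD} and \propref{catestunedistance}.
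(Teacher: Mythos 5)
Your proof is correct and follows essentially the same route as the paper's: the lower bound $\cat(X)=D(\prun\circ\inun,\prun\circ\indeux)\leq D(\inun,\indeux)$ via \propref{compositionagauche} and \propref{catestunedistance}, and the upper bound via \cororef{majorationD}. Your preliminary remark that one must take $Y=X$ for $D(\inun,\indeux)$ to be well-formed is a reasonable clarification that the paper leaves implicit.
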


\begin{proof}Using the equality of \propref{catestunedistance}, and \propref{compositionagauche}, we have: $\cat(X) = D(\id_X, n_X) = D(\prun\circ\inun,\prun\circ\indeux) \leq D(\inun,\indeux)$. But we also have: $D(\inun,\indeux) \leq \cat(X)$ by \cororef{majorationD}. \end{proof}

\begin{proposition}For any path-connected spaces $X$ and $Y$ such that $X\times Y$ is normal, we have:
$$\cat(X \times Y) \leq \cat(X) +\cat(Y).$$
\end{proposition}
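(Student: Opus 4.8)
The plan is to reduce the statement to the product inequality for homotopic distance already recorded in \propref{inegproduit}. The bridge is \propref{catestunedistance}, which rewrites the category of a path-connected space as a homotopic distance: for a constant map $n_Z\colon Z \to Z$ one has $\cat(Z) = D(\id_Z, n_Z)$. Since $X$ and $Y$ are path-connected, so is $X\times Y$, and thus this identity applies to each of $X$, $Y$, and $X\times Y$.

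First I would choose constant maps $n_X\colon X \to X$ and $n_Y\colon Y \to Y$ and observe that $\id_X \times \id_Y = \id_{X\times Y}$, while $n_X\times n_Y\colon X\times Y \to X\times Y$ is again a constant map, since a product of constant maps is constant (it sends every point to the pair formed by the image points of $n_X$ and $n_Y$). Hence, by \propref{catestunedistance} applied to $X\times Y$,
$$\cat(X\times Y) = D(\id_{X\times Y}, n_X\times n_Y) = D(\id_X \times \id_Y,\, n_X \times n_Y).$$

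Next I would apply \propref{inegproduit} with $f = \id_X$, $g = n_X$, $f' = \id_Y$, $g' = n_Y$. Its hypothesis is exactly that the product of the two source spaces be normal, that is, that $X\times Y$ be normal, which is assumed; so it yields $D(\id_X\times \id_Y, n_X\times n_Y) \leq D(\id_X, n_X) + D(\id_Y, n_Y)$. Applying \propref{catestunedistance} once more to $X$ and to $Y$, the right-hand side equals $\cat(X) + \cat(Y)$, and combining this with the displayed equality gives the desired inequality.

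The content of the argument is essentially bookkeeping: all the real work has been done in \propref{catestunedistance} (which turns $\cat$ into a distance) and in \propref{inegproduit} (which is where the triangle inequality and the normality hypothesis are used). The one point that requires care is matching the normality condition, since \propref{inegproduit} demands normality of the product of the two source spaces, and here both sources are $X$ and $Y$, so the condition is precisely the normality of $X\times Y$ that we hypothesize. No separate appeal to the triangle inequality is needed, as it is already absorbed into \propref{inegproduit}.
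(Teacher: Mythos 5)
Your proof is correct and is essentially the paper's own argument: the paper's proof likewise consists of applying \propref{inegproduit} with $f = \id_X$, $f' = \id_Y$, $g = n_X$, $g' = n_Y$ and invoking the equality $\cat(Z) = D(\id_Z, n_Z)$ from \propref{catestunedistance}. You merely spell out the bookkeeping (path-connectedness of $X\times Y$, the fact that $n_X\times n_Y$ is constant) that the paper leaves implicit.
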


\begin{proof}Apply \propref{inegproduit} with $f = \id_X$, $f'= \id_Y$, $g = n_X$, and $g' = n_Y$ and use the equality of \propref{catestunedistance}.\end{proof}

Normality is a necessary condition for this inequality, and therefore also the triangle inequality, to be true. See the next example:
\begin{example}\label{pseudocircle}
Consider the {\em pseudocircle}, that is the (path-connected) space $S = \{a_1, a_2, b_1, b_2\}$ in $\Top$ whose open subsets are $\varnothing$, $\{b_i\}$ ($i$ = 1 or 2), $\{b_1, b_2\}$, $\{b_1, b_2, a_i\}$ ($i =$ 1 or 2) and $S$. 

There is a weak homotopy equivalence $f\colon \cercle \to S$ from the circle to $S$, but $S$ has not the homotopy type of any normal space $X$, even non-Hausdorff.

%Note that $S$ can't {\em live} in $\Topw$ because only $a_1$ and $a_2$ are closed points and the inclusion $\{a_i\} \hookrightarrow S$ ($i =$ 1 or 2) is not a cofibration. 

The space $S$ can not be deformed into a point by any homotopy.  On the other hand, the two sets $U_i = \{b_1, b_2, a_i\}$ ($i =$ 1 or 2) cover $S$ and their inclusions into $S$ are nullhomotopic. So $\catop(S) = 1$.  

The space $S \times S$ is covered by the four opens $U_i \times U_j$ ($i$ and $j$ = 1 or 2), so $\catop(S\times S) \leq 3$.
But for any open subset $V$ of $S\times S$ that contains at least two of the four points $(a_i, a_j)$ ($i$ and $j$ = 1 or 2), the inclusion of $V$ into $S\times S$ is not nullhomotopic. So four open subsets with nullhomotopic inclusion into $S\times S$ are necessary to cover $S\times S$, so $\catop(S\times S) = 3$. See \cite[Example 3.5]{Tan18}.
\end{example}

%%%%%%%%%%%%%%%%%%%%%%%%%%%%%%%%%%%%%%%%%%%%%%%%%%%%%%%%%
%\nocite{*}
\bibliographystyle{plain}
\bibliography{secat}

\end{document}